\definecolor{codegreen}{rgb}{0,0.6,0}
\definecolor{codegray}{rgb}{0.5,0.5,0.5}
\definecolor{codepurple}{rgb}{0.58,0,0.82}
\definecolor{backcolour}{rgb}{0.95,0.95,0.92}
\lstdefinestyle{mystyle}{
    backgroundcolor=\color{backcolour},   
    commentstyle=\color{codegreen},
    keywordstyle=\color{magenta},
    numberstyle=\tiny\color{codegray},
    stringstyle=\color{codepurple},
    basicstyle=\ttfamily\footnotesize,
    breakatwhitespace=false,         
    breaklines=true,                 
    captionpos=b,                    
    keepspaces=true,                 
    numbers=left,                    
    numbersep=5pt,                  
    showspaces=false,                
    showstringspaces=false,
    showtabs=false,                  
    tabsize=2
}
\theoremstyle{plain}
\newtheorem{theorem}{Theorem}[section]
\newtheorem{lemma}[theorem]{Lemma}
\newtheorem{proposition}[theorem]{Proposition}
\newtheorem{definition}[theorem]{Definition}
\theoremstyle{definition}
\newcommand{\RR}{\mathbb{R}}			
\newcommand{\Z}{\mathbb{Z}}			
\newcommand{\R}{\mathbb{R}}			
\newcommand{\bn}{{\boldsymbol{n}}}
\newcommand{\Acal}{\mathcal{A}}
\newcommand{\Lcal}{\mathcal{L}}
\title{Experimental Results on Potential Markov Partitions for Wang Shifts}
\author[H.~Hults]{Harper Hults}
\address[H.~Hults]{University of Washington Bothell, 18115 Campus Way NE, Bothell, WA 98011-8246}
\email{hhhults@uw.edu}
\author[H.~Jitsukawa]{Hikaru Jitsukawa}
\address[H.~Jitsukawa]{Tufts University, 419 Boston Ave, Medford, MA 02155}
\email{hikaru.jitsukawa@tufts.edu}
\author[C.~Mann]{Casey Mann}
\address[C.~Mann]{University of Washington Bothell, 18115 Campus Way NE, Bothell, WA 98011-8246}
\email{cemann@uw.edu}
\author[J.-Zhang]{Justin Zhang}
\address[J.~Zhang]{Georgia Institute of Technology, North Ave NW, Atlanta, GA 30332 - }
\email{jzhang3264@gatech.edu}
\keywords{Aperiodic tiling \and Wang shift \and SFT \and Multidimensional SFT
\and Nonexpansive directions}
\subjclass[2020]{Primary 37B51; Secondary 37B10, 52C23}
\date{July 2022}
\begin{document}

\maketitle

\begin{abstract}
In this article we discuss potential Markov partitions for three different Wang tile protosets. The first partition is for the order-24 aperiodic Wang tile protoset that was recently shown in the Ph.D. thesis of H. Jang to encode all tilings by the Penrose rhombs. The second is a partition for an order-16 aperiodic Wang protoset that encodes all tilings by the Ammann A2 aperiodic protoset. The third partition is for an order-11 Wang tile protoset identified by Jeandel and Rao as a candidate order-11 aperiodic Wang tile protoset. The emphasis is on some experimental methodology to generate potential Markov partitions that encode tilings. We also apply some of the theory developed by Labb\'{e} in analyzing such an experimentally discovered partition. \end{abstract}

\section{Introduction}

\subsection{Tilings and Aperiodic Protosets}\label{sec:tiling}
A \emph{\textbf{tiling}} of the Euclidean plane $\mathbb{E}^2$ is a collection $\{T_i\}$ of distinct sets called \emph{\textbf{tiles}} (which are typically topological disks) such that $\cup T_i = \mathbb{E}^2$ and for all $i \neq j$, $\text{int}(T_i) \cap \text{int}(T_j) = \emptyset$. A \emph{\textbf{protoset}} for a tiling $\mathscr{T}$ is a collection of tiles $\mathcal{T}$ such that each tile of $\mathscr{T}$ is congruent to a tile in $\mathcal{T}$, in which case we say that $\mathcal{T}$ \emph{\textbf{admits}} the tiling $\mathscr{T}$. The \emph{\textbf{symmetry group}} of a tiling $\mathscr{T}$ is the set of rigid planar transformations $\mathcal{S}(\mathscr{T})$ such that $\sigma(\mathscr{T}) = \mathscr{T}$ for all $\sigma \in \mathcal{S}(\mathscr{T})$. If $\sigma(\mathscr{T})$ contains two nonparallel translations, we say that $\mathscr{T}$ is \textbf{\emph{periodic}}; otherwise, we say $\mathscr{T}$ is \emph{\textbf{nonperiodic}}. If $\mathcal{T}$ admits at least one tiling of the plane and every such tiling is nonperiodic, we say that $\mathcal{T}$ is an \emph{\textbf{aperiodic protoset}}.

Aperiodic protosets are somewhat rare, and indeed, until the 1960s, no aperiodic protosets were known to exist. H. Wang famously conjectured that aperiodic protosets cannot exist \cite{Wang}; that is, Wang conjectured that any protoset that admits a tiling of the plane must admit at least one periodic tiling. Wang's conjecture was refuted in 1964 by his doctoral student, R. Berger, in his PhD thesis \cite{MR2939561,MR216954} where an aperiodic protoset consisting of over 20,000 edge-marked squares called Wang tiles was described. Wang tiles are a bit of a special case in the theory of tilings in that only translations are allowed in forming tilings from Wang tile protosets; if rotations and reflections are also allowed, then any single Wang tile admits periodic tilings of the plane, and so without this restriction the problem is trivial. Since Berger's original discovery, several lower-order aperiodic Wang tile protosets have been discovered, culminating in 2015 with the discovery by E. Jeandel and M. Rao \cite{JR1} of an order-11 aperiodic Wang tile protoset. In this same work the authors proved that 11 is the minimum possible order for an aperiodic Wang tile protoset. The aperiodic Jeandel-Rao protoset is seen in Figures \ref{fig:JR_proto}, and here we are using the same labeling for the Jeandel-Rao protoset as given by Labb\'{e} in \cite{Labb2021}.

\begin{figure}[h]
\begin{center}
\includegraphics[width=\textwidth]{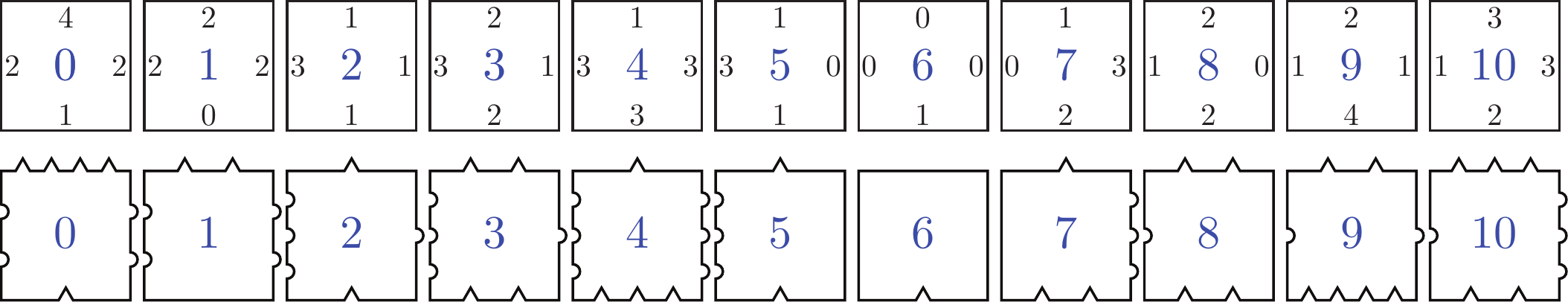}\caption{The Jeandel-Rao aperiodic Wang protoset $\mathcal{T}_0$. The bottom row provides geometric matching rules for the prototiles in the top row.}\label{fig:JR_proto}
\end{center}
\end{figure}

Aperiodic protosets of tiles other than Wang tiles are also known. In 1971, R. Robinson developed a way to encode the idea of Berger's original aperiodic protoset into an order-6 aperiodic protoset consisting of shapes with notched edges and modified corners (Figure \ref{fig:robinson}). In 1974, R. Penrose discovered an order-6 aperiodic protoset that he was later able to modify to produce an order-2 aperiodic protoset. There are a few varieties of the order-2 aperiodic Penrose protoset, one of which, called \emph{P3} or the \emph{Penrose rhombs}, is depicted in Figure \ref{fig:penrose_rhombs}; this protoset will be discussed further this article. More recently, in 2023 two singleton aperiodic protosets was discovered (\cite{hat, spectre}) - these ``aperiodic monotiles" are called the \emph{hat} (Figure \ref{fig:hat}) and \emph{spectre} (Figure \ref{fig:spectre}); we note that no edge-matching rules are needed to enforce nonperiodicity with the hat and spectre, unlike the Penrose rhombs, for example.

\begin{figure}[h]
\centering
\begin{subfigure}[b]{.4\textwidth} 
\centering
\includegraphics[width=.8\textwidth]{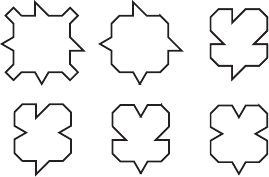}  
\caption{Robinson's order-6 aperiodic protoset}
\label{fig:robinson}
\end{subfigure}
\begin{subfigure}[b]{.4\textwidth} 
\centering
\includegraphics[width=.8\textwidth]{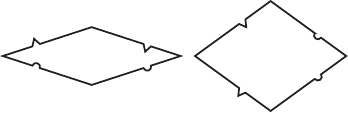}  
\caption{The Penrose P3 aperiodic protoset}
\label{fig:penrose_rhombs}
\end{subfigure}
\begin{subfigure}[b]{.4\textwidth} 
\centering
\includegraphics[width=.4\textwidth]{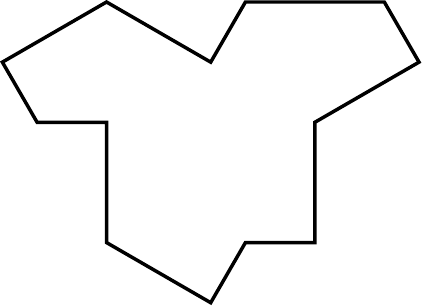}  
\caption{The hat monotile}
\label{fig:hat}
\end{subfigure}
\begin{subfigure}[b]{.4\textwidth} 
\centering
\includegraphics[width=.4\textwidth]{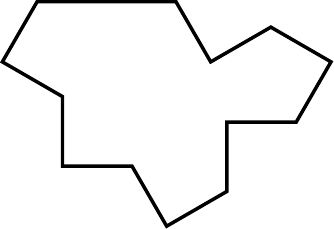}  
\caption{The spectre monotile}
\label{fig:spectre}
\end{subfigure}
\label{fig:aperiodic}\caption{Some low-order aperiodic protosets}\end{figure}


\subsection{Labb{\'{e}}'s Markov Partition for the Jeandel-Rao Protoset}
Let $\mathcal{T}_0 = \{T_0, T_1, T_2, \ldots, T_{10}\}$ be the aperiodic Jeandel-Rao Wang tile protoset (Figure \ref{fig:JR_proto}). The set of all tilings admitted by $\mathcal{T}_0$ is called the \emph{Jeandel-Rao Wang shift} and is denoted $\Omega_0$.  In \cite{Labb2021}, S. Labb\'{e} presented a special partition $\mathcal{P}_0 = \{P_0, P_1, \ldots, P_{10}\}$ (Figure \ref{fig:P0}) of the two-dimensional torus $\mathbf{T}$, along with a $\Z^2$ action on $\mathbf{T}$, that together give rise tilings in $\Omega_0$. Specifically, let $\varphi = (1 + \sqrt{5})/2$ be the golden mean and let $\Gamma_0$ be the lattice in $\RR^2$ generated by $(\varphi,0)$ and $(1,\varphi+3)$. Then the partitioned torus is $\mathbf{T} = \R^2 / \Gamma_0$ and the $\Z^2$ action $R_0$ on $\mathbf{T}$ is defined by $R_{0}^{\mathbf{n}}(\boldsymbol{x}) = \boldsymbol{x} + \mathbf{n} \pmod{\Gamma_0}$. 

Here we describe how the partition $\mathcal{P}_0$ of $\mathbf{T}$ and the action $R_0$ encode tilings by $\mathcal{T}_0$. Let $p \in \mathbf{T}$ and consider the orbit $\mathcal{O}_{\R_0}(p) = \{R_{0}^{\mathbf{n}}(p):\mathbf{n} \in \Z^2\}$. Then $\mathcal{O}_{ R_0}(p)$ corresponds to a tiling in $\Omega_0$ in the following way: \[T_i \in \mathcal{T}_0 \text{ is located at position }\textbf{n} \text{ iff } R_{0}^{\mathbf{n}}(p) \in P_i.\] This idea is depicted in Figure \ref{fig:P0}. There are several technical details that Labb\'{e} proved so that the correspondence between orbits of points in the partitioned torus correspond to tilings in an unambiguous way, but this is the gist of it. It is interesting - even somewhat amazing -  that most tilings admitted by $\mathcal{T}_0$ are encoded by $\mathcal{P}_0$ and $R_0$!

In this article, we present some partitions, discovered experimentally, that are analogous to the Markov partition $\mathcal{P}_0$ for the Jeandel-Rao protoset, but for other Wang tile protosets. Further, we discuss how to identify other potential Markov partitions for Wang shifts in which tilings exhibit ``golden rotational" behavior similar to $R_0$ with $\mathcal{P}_0$. We will also discuss theoretical framework described by Labb\'{e} in \cite{Labb2021} and apply it to one of these  new partitions in an effort to describe the full space of tilings arising from the partition.

\begin{figure}[h]
\begin{center}
\includegraphics[width=\textwidth]{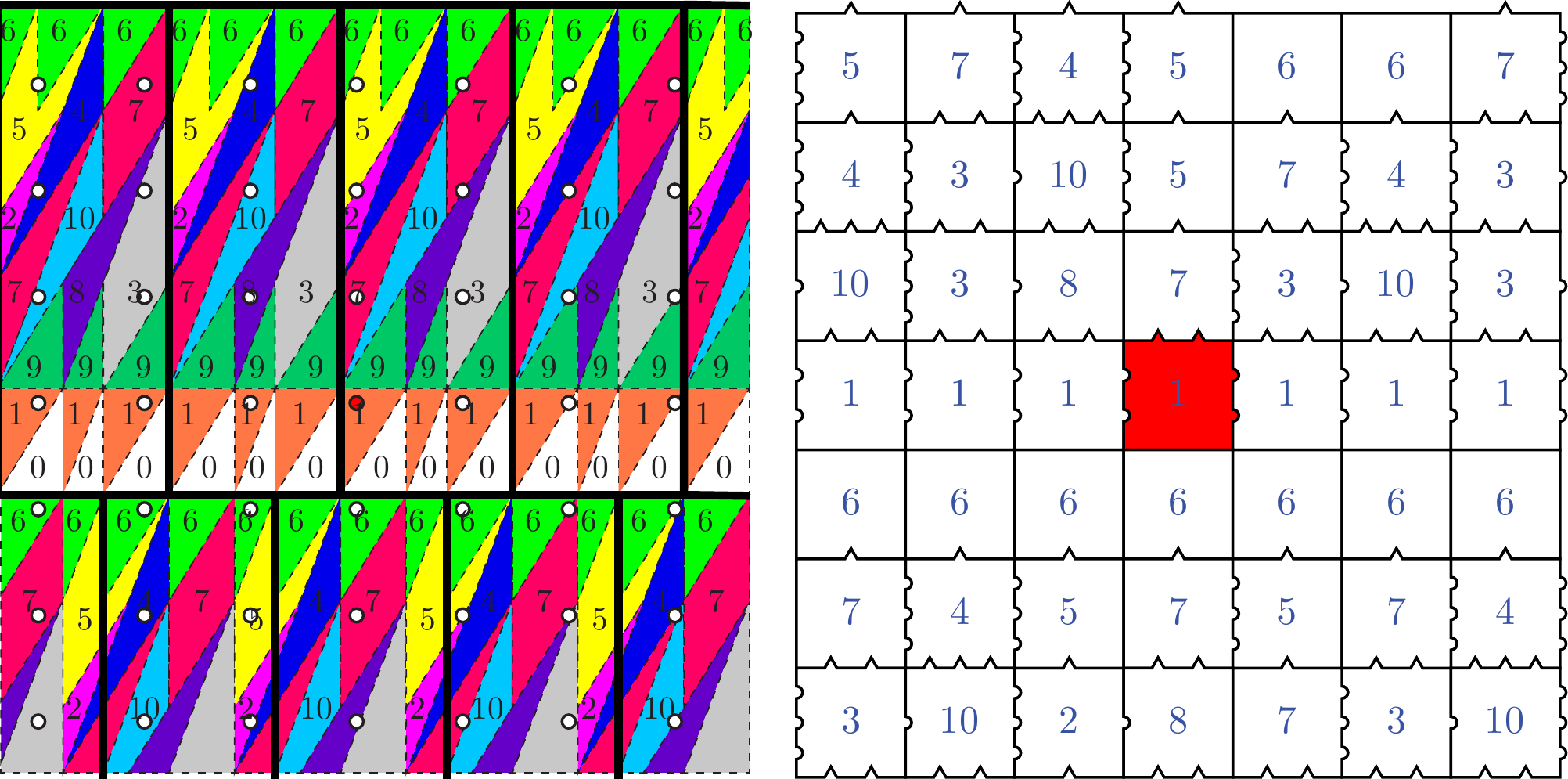}\caption{Labb\'{e}'s partition $\mathcal{P}_0$ of the torus $\mathbf{T}$ is outlined by the heavy black line. The orbit of the red point $\mathbf{p}$ under the $\Z^2$-action $R_0$ definted by $R_{0}^{\mathbf{n}}(\mathbf{p}) = \mathbf{p} + \mathbf{n}$ (the set of white points) corresponds to a Wang tiling in $\Omega_0$, depicted at right.} \label{fig:P0}
\end{center}
\end{figure}

\section{Labb\'{e}'s Construction}
While we are focussed on an experimental approach in this article, it will be necessary to describe some of the technical details of Labb\'{e}'s construction, which are described in the language of dynamical systems. For now, we want to keep this discussion as non-technical as possible to focus on the big picture ideas, and we will provide more technical apsects in Appendix \ref{apx:dyn_sys}. In that spirit, let us start by describing Wang tilings in the language of dynamical systems. This begins by defining Wang tilings in a very symbolic way.

\subsection{Spaces of Wang Tilings as Dynamical Systems}\label{subsec:dyn_sys_intro}
Geometrically, a \emph{\textbf{Wang tile}} is a square with labeled (or colored) sides that serve as edge matching rules; two Wang tiles may meet along an edge if corresponding labels match. In forming a \textbf{\emph{Wang tiling}} from copies of tiles in a protoset of Wang tiles, we allow only translates of the prototiles to be used, and we require that the tiles to meet edge-to-edge, respecting the matching rules. The edge-to-edge  matching rules and restriction to translations can always be enforced geometrically, as in Figure \ref{fig:JR_proto}. Wang tilings can be aligned with the integer lattice $\Z^2$ so that each tile has its lower left-hand corner in $\Z^2$, and thus we may view a Wang tiling as a decoration of $\Z^2$ such that each point of $\Z^2$ is ``colored" with a single tile. 

More formally, a Wang tile can be represented as a tuple of four colors $(r, t, \ell, b) \in V \times H \times V \times H$ where $V$ and $H$ are finite sets (the vertical and horizontal colors, respectively). For any Wang tile $T = (r, t, \ell, b)$, let $\text{Right}(T) = r$, $\text{Top}(T) = t$, $\text{Left}(T) = \ell$, and $\text{Bottom}(T) = b$ be the colors of the right, top, left, and right sides of $\tau$, respectively. Let $\mathcal{T} = \{T_0, T_1, \ldots, T_{n-1}\}$ be a protoset of Wang tiles. In forming a tiling with the protoset $\mathcal{T}$, we will think of the prototiles of $\mathcal{T}$ as symbols used to label the integer lattice $\Z^2$. This motivates us to define \[\mathcal{T}^{\Z^2} = \{x \!:\! \Z^2 \rightarrow \mathcal{T}\}\] to be the \textbf{\emph{configuration space}} of $\mathcal{T}$, and each element $x \in \mathcal{T}^{\Z^2}$ is called a \textbf{\emph{configuration}}. We will use the notation $x_{\mathbf{n}} = x(\mathbf{n})$ to indicate the value of $x$ at $\mathbf{n} \in \Z^2$ so that if $x_{\mathbf{n}} = T_i$ we understand that tile $T_i$ is located at position $\mathbf{n}$ (i.e,, oriented so that its corners lie in $\Z^2$ and the lower left-hand corner is at $\mathbf{n}$). It is possible that a configuration in $\mathcal{T}^{\Z^2}$ does not represent a valid tiling; so far, this construction does not account for the matching rules of the tiles. To account for the matching rules, we define a configuration $x$ to be \textbf{\emph{valid}} if for every $\mathbf{n} \in \Z^2$, we have $\text{Right}(x_{\mathbf{n}}) = 
\text{Left}(x_{\textbf{n} + (1,0)})$ and $\text{Top}(x_{\mathbf{n}}) = 
\text{Bottom}(x_{\textbf{n} + (0,1)})$. The \emph{\textbf{Wang shift}} of $\mathcal{T}$ is the set $\Omega_{\mathcal{T}}$ of all valid configurations in $\mathcal{T}^{\Z^2}$. Thus, $\Omega_{\mathcal{T}}$ captures in a symbolic way the set of all valid Wang tilings admitted by the Wang tile protoset $\mathcal{T}$.

A \textbf{\emph{topological dynamical system}} consists of a topological space $X$ and a group $G$ that acts on $X$ via a continuous function $S\!:\!:G \times X \rightarrow X$, and is denoted by a triple $(X,G,S)$. For fixed $g \in G$, let $S^g\!:\! X \rightarrow X$ be defined by $S^g(x) = S(g,x)$. The \textbf{\emph{orbit}} of a point $x \in X$ under the action of $S$ on $G$ is the set $\mathcal{O}_S(x,G) = \{S^g(x):g \in G\}$, and when the group $G$ is understood by context, we will shorten this notation to just $\mathcal{O}_{S}(x)$. We say $S$ \textbf{\emph{acts freely}} on $X$ if for all $x \in X$, $S^g(x) = x$ if and only if $g = e$ where $e \in G$ is the group identity element. 

Let $\sigma:\Z^2 \times \mathcal{T}^{\Z^2} \rightarrow \mathcal{T}^{\Z^2}$ be defined by $\sigma^{\mathbf{n}}(x_{\mathbf{m}}) = x_{\mathbf{m} + \mathbf{n}}$. We call $\sigma$ the $\Z^2$ \textbf{\emph{shift action}} on $\mathcal{T}^{\Z^2}$; the terminology ``shift action" is accurately descriptive here since, for each $\mathbf{n} \in \Z^2$ and $x \in \mathcal{T}^{\Z^2}$, $\sigma^{\mathbf{n}}(x)$ is a translation of $x$ by $\mathbf{n}$. By placing the appropriate metric on $\mathcal{T}^{\Z^2}$, it is not too hard to see that $\Omega_{\mathcal{T}} \subseteq \mathcal{T}^{\Z^2}$ is a topological space so that $(\Omega_{\mathcal{T}},\Z^2,\sigma)$ satisfies the definition of being a topological dynamical system. Moreover, $(\Omega_{\mathcal{T}},\Z^2,\sigma)$ is a special kind of topological dynamical system called a \emph{shift of finite type} (see Appendix \ref{apx:dyn_sys}).

If $\sigma$ acts freely on $\Omega_{\mathcal{T}}$, we say that $\Omega_{\mathcal{T}}$ is an \emph{\textbf{aperiodic shift}}, and we say that a Wang protoset $\mathcal{T}$ is an \emph{\textbf{aperiodic protoset}} if the corresponding Wang shift $\Omega_{\mathcal{T}}$ is nonempty and aperiodic. Notice that this definition for aperiodic protoset agrees with with more general definition given in Section \ref{sec:tiling} if we restrict the allowable symmetries to translations only.

\subsection{Labb\'{e}'s Partition-Based Dynamical System of Tilings}

Here we will provide the broad strokes of Labb\'{e}'s \cite{Labb2021} general approach and construction of the dynamical system $(\mathcal{X}_{\mathcal{P},R},\Z^2,\sigma)$ that is based on a partition $\mathcal{P}$ of a compact topological space $\mathbf{T}$ and a $\Z^2$ action $R$ on $\mathbf{T}$. The space $\mathcal{X}_{\mathcal{P},R}$ is the set of tilings encoded by the partition $\mathcal{P}$ and the action $R$, as exemplified by Figure \ref{fig:P0}. We will relegate much of the more technical details to Appendix \ref{apx:dyn_sys}. The main point is that methodologies are described in \cite{Labb2021} that can be used to prove when $\mathcal{P}$ and $R$ produce a space $\mathcal{X}_{\mathcal{P},R}$ that is reasonable subspace (or subshift) of the space of all valid tilings $\Omega_{\mathcal{T}}$ admitted by $\mathcal{T}$.

First, let $\mathbf{T}$ be a compact metric space and let $I_n = \{0,1,\ldots,n-1\}$. A collection $\mathcal{P} = \{P_i\}_{i\in I_n}$ of disjoint open sets with $\mathbf{T} = \cup_{i \in I_n}\overline{P}_i$ is called a \textbf{\emph{topological partition}} of $\mathbf{T}$, and the sets $P_i \in \mathcal{P}$ are called the \textbf{\emph{atoms}} of the partition. Let $\mathcal{P} = \{P_i\}_{i\in I_n}$ is a topological partition of $\mathbf{T}$, and let $\mathcal{T} = \{T_i\}_{i \in I_n}$ be a protoset of Wang tiles having the same indexing set $I_n$ as $\mathcal{P}$. Let $R$ be a $\Z^2$ action on $\mathbf{T}$. Then $(\mathbf{T},\Z^2,\R)$ is a topological dynamical system, and with $\mathbf{T}$ so partitioned by $\mathcal{P}$, the hope is that the orbit of a point $p \in \mathbf{T}$ will ``spell out" a tiling in $\Omega_{\mathcal{T}}$. More precisely, for each $p \in \mathbf{T}$, we can consider the orbit $\mathcal{O}_{R}(p) = \{R^{\mathbf{n}}(p):\mathbf{n} \in \Z^2\}$, and for each $\mathbf{n} \in \Z^2$, define the configuration $x\!:\!\Z^2 \rightarrow \mathcal{T}$ in $\mathcal{T}^{\Z^2}$ by \[x_{\mathbf{n}} = T_i \text{ if and only if } R^{\mathbf{n}}(p) \in P_i.\] The configuration $x$ may not be not well defined because for some $\mathbf{n} \in \Z^2$, the point $R^{\mathbf{n}}(p)$ may lie on the boundary of more than one atom of $P_i$. However, Labb\'{e} \cite{Labb2021} was able prove that this ambiguity can be removed in the following way: By specifying a direction $\mathbf{v}$ that is not parallel to any of the sides of the atoms of the partition, then if a point $a \in \mathcal{O}_{R}(p)$ falls on the boundary of an atom, the choice of tile associated with that point is the one in the direction of $\mathbf{v}$ from $a$. In this way, each point $p \in \mathbf{T}$ can be associated uniquely with a configuration $x \in \mathcal{T}^{\Z^2}$, thereby defining a mapping $\text{SR}^{\mathbf{v}} \!:\! \mathbf{T} \rightarrow \mathcal{T}^{\Z^2}$ (``$SR$" for ``symbolic representation"). With this map properly defined, we obtain the space of interest, $\mathcal{X}_{\mathcal{P},R}$, as the topological closure of the image of $SR^{\mathbf{v}}$: \[\mathcal{X}_{\mathcal{P},R} = \overline{\text{SR}^{\mathbf{v}}(\mathbf{T})}.\] Labb\'{e} \cite{Labb2021} shows that the choice of $\mathbf{v}$ doesn't actually matter after taking the topological closure, so the space $\mathcal{X}_{\mathcal{P},R}$ is not dependent on $\mathbf{v}$. Figure \ref{fig:P0} is worth a thousand words in understanding the essential function of $SR$. In that figure, we see how the points of the orbit of a point $p$ in the torus forms a \emph{configuration} of Wang tiles. 

We pause here to point out that the space $\mathcal{X}_{\mathcal{P},R}$ is fully defined in more technical terms in \cite{Labb2021} that facilitate the use of the tools of dynamical systems to prove that if the partition $\mathcal{P}$ and the action $R$ satisfy reasonable technical properties, the partitioned system $(\mathbf{T},\Z^2,R)$ gives rise to $(\mathcal{X}_{\mathcal{P},R}, \Z^2,\sigma)$ along with morphims going both directions. With such formalisms in place, important properties of the space $(\mathcal{X}_{\mathcal{P},R}, \Z^2,\sigma)$ can be determined. For example, we are interested in knowing when $(\mathcal{X}_{\mathcal{P},R}, \Z^2,\sigma)$ is shift of finite type, which is an important property that a Wang shift has. Another important proptery is minimality of the subshift, which can be deduced from properties of the partition and action.

At this point, it may be believable that $\mathcal{X}_{\mathcal{P},R}$ is a reasonable subset of $\mathcal{T}^{\Z^2}$, so configurations of $\mathcal{X}_{\mathcal{P},R}$ can be seen as potential valid configuations (i.e. tilings), but it should not be clear at all how the partition ``knows" what the edge matching rules are and how orbits of points correspond to \emph{valid tilings} that respect the matching rules of the prototiles of $\mathcal{T}$. This is where some magic occurs in the Labb\'{e} construction. The magic is that the partition $\mathcal{P}$ needs to be a refinement of four other partitions $\mathcal{P}_r$, $\mathcal{P}_t$, $\mathcal{P}_{\ell}$, and $\mathcal{P}_b$ called \emph{side partitions} that encode the matching rules of prototiles in $\mathcal{T}$. With these side partitions suitably defined, the main partition $\mathcal{P}$ is defined as $\mathcal{P} = \mathcal{P}_r \cap \mathcal{P}_t \cap \mathcal{P}_{\ell} \cap \mathcal{P}_b$. With $\mathcal{P}$ so defined, and checking that $\mathcal{P}$ has certain necessary dynamical properties, it is argued in \cite{Labb2021} that  $\mathcal{X}_{\mathcal{P},R} \subseteq \Omega_{\mathcal{T}}$. That is, the configurations generated from orbits of points in the partitioned set $\mathbf{T}$ are valid tilings! We use the word ``magic" here to describe the process of finding $\mathcal{P}$  because there is no simple formula for finding such a partition. For a given Wang protoset $\mathcal{T}$, the partition may be found through some educated guesswork and the help of computers. In the next section, we will describe an experimental approach to finding partitions.

\section{Finding Partitions Experimentally}

Let us return to the Jeandel-Rao aperiodic protoset and Labb\'{e}'s corresponding Markov partition from Figure \ref{fig:P0}. Recall that in this construction we have $\mathbf{T}=\R^2 / \Gamma_0$ where $\Gamma_0$ is generated by $(\varphi,0)$ and $(1,\varphi+3)$. Given a tiling $x \in \Omega_{\mathcal{T}_0}$, we will attempt to recover the partition $\mathcal{P}_0$. Suppose that $x \in \Omega_0$ is ``generated" from the orbit of a point $p \in \mathbf{T}$, using the partition $\mathcal{P}_0$ and the action $R_0$. Without loss of generality, we may (by shifting the orientation of the partition $\mathcal{P}_0$) suppose that $p = \mathbf{0} = (0,0)$. For each $\mathbf{n} \in \Z^2$, let $t_{\mathbf{n}} = R_{0}^{\mathbf{n}}(\mathbf{0})$. By definition of $R_0$, we have \begin{equation} \label{eqn:mod} t_{\mathbf{n}} = \mathbf{n} \!\!\! \pmod{\Gamma_0}.\end{equation} Recall the manner in which the partition $\mathcal{P}_0$ encodes the tiling: If $t_\mathbf{n} \in P_i \in \mathcal{P}_0$, then a copy of prototile $T_i \in \mathcal{T}_0$ is located at position $\mathbf{n}$. 

Now, working backward, suppose we have a tiling $x$ generated by the orbit $\mathcal{O}_{R_0}(\mathbf{0})$, but after generating $x$, let us pretend that we have lost all knowledge of the partition. If we colored the points $t_{\mathbf{n}}$ according to the label of the tile located at position $\mathbf{n}$, then the colored set of points \begin{equation}\label{eqn:orig_dots} \mathcal{P}_{\text{dots}} = \{t_{\mathbf{n}} \in \mathbf{T}: n \in \Z^2, t_{\mathbf{n}} \text{ has color }i \text{ if } T_i \text{ is located at position }\mathbf{n} \}\end{equation} would appear as a dense set of points in the rectangle $[\varphi,0]\times [1,\varphi+3]$, and the coloring of the points $t_{\mathbf{n}}$ would reveal the partition $\mathcal{P}_0$. 

Our goal here is to demonstrate that a partition associated with a Wang shift can be revealed by looking at specific tilings in the Wang shift without prior knowledge of the partition. By Equation \ref{eqn:mod}, for each $\mathbf{n} \in \Z^2$, there exists some $\mathbf{a}_{\mathbf{n}} \in \Z^2$ such that \begin{equation} \label{eqn:matrix} t_{\mathbf{n}} = \mathbf{n} + A\mathbf{a}_{\mathbf{n}}\end{equation} where \[ A = \left(\begin{matrix} \varphi & 1\\
0 & \varphi + 3\end{matrix}\right)\] is the matrix whose columns are the generating vectors of $\Gamma_0$. From Equation \ref{eqn:matrix}, we obtain $A^{-1}\mathbf{n} = A^{-1}t_{\mathbf{n}} - \mathbf{a}_{\mathbf{n}}$. Because $\mathbf{a}_{\mathbf{n}} \in \Z^2$, it follows that \begin{equation} \label{eqn:pull_back} A^{-1}\mathbf{n} \!\!\! \pmod{\Z^2} = A^{-1}t_{\mathbf{n}} \!\!\! \pmod{\Z^2}.\end{equation} Let \[\mathcal{P}_{\text{dots}}'=\left\{ A^{-1}\mathbf{n} \!\!\! \pmod{\Z^2}:\mathbf{n} \in \Z^2\right\},\] and assign to each $p_\mathbf{n} = A^{-1}\mathbf{n} \in \mathcal{P}_{\text{dots}}'$ the color $i$ if a copy of prototile $T_i$ is located at position $\mathbf{n}$. From Equation \ref{eqn:pull_back}, we see that the set $\mathcal{P}_{\text{dots}}'$ is just a linearly transformed copy of the original dense set of colored points $\mathcal{P}_{\text{dots}}$, and recall that $\mathcal{P}_{\text{dots}}$ reveals the partition $\mathcal{P}_0$. \emph{Therefore, we see that $\mathcal{P}'_{\text{dots}}$,  being a linear transformation of $\mathcal{P}_{\text{dots}}$, reveals a partition $\mathcal{P}'$ that is essentially equivalent to the original $\mathcal{P}_0$.}

If the only knowledge we have at the beginning of this process is that of a tiling or a large finite portion of a tiling (perhaps generated by a computer algorithm), then we may not be immediately privy to the matrix $A$ (i.e., the generating vectors of the lattice that forms the torus) whose inverse we would use to pull back the tiling to obtain a partition, but the tiling itself may give clues about this matrix. In fact, Labb\'{e} discovered the partition in exactly this way - by noticing some Sturmian-like patterns in the rows of a large patch of a Jeandel-Rao tiling that suggested the correct vectors by which to ``mod-out." \emph{Finding the matrix $A$ is the key to this strategy.}

To see how this experimental approach reveals the partition (once you have guessed the correct generating vectors for the lattice), let us first use Labb\'{e}'s Sagemath package (\cite{slabbe_sage}) to produce a tiling of a given size. Here we produce a 100x100 patch of a Jeandel-Rao Wang tiling:

\begin{lstlisting}
    from slabbe import WangTileSet

    tiles = [(2,4,2,1), (2,2,2,0), (1,1,3,1), (1,2,3,2), (3,1,3,3),
    ....: (0,1,3,1), (0,0,0,1), (3,1,0,2), (0,2,1,2), (1,2,1,4), (3,3,1,2)]

    T0 = WangTileSet(tiles)

    # This produces a 100x100 tiling from the Jeandel-Rao protoset in which the bottom row consists only of copies of prototile 4
    x = 100
    y = 100
    preassigned_tiles = {(a,0):4 for a in range(x)};
    S = T0.solver(x,y,preassigned_tiles=preassigned_tiles)
    %time tiling = S.solve(solver='glucose')
\end{lstlisting}

Next, from the tiling so produced, we can pull back the points of the tiling to the unit square and color each using the label of the tile from which it came. 

\begin{lstlisting}
    # This produces the pull back of the tiling into torus (quotient of the unit square) in which the color of the pulled-back point is the label of the tile from which it came.
    p = (1+sqrt(5))/2;
    M = matrix.column([[p,0],[1,p+3]]);
    P = tiling.plot_points_on_torus(M.inverse(),pointsize=5)
    show(P,aspect_ratio = 1)
\end{lstlisting}

Here is the picture so produced:

\begin{figure}[h]
\begin{center}
\includegraphics[width=0.75\textwidth]{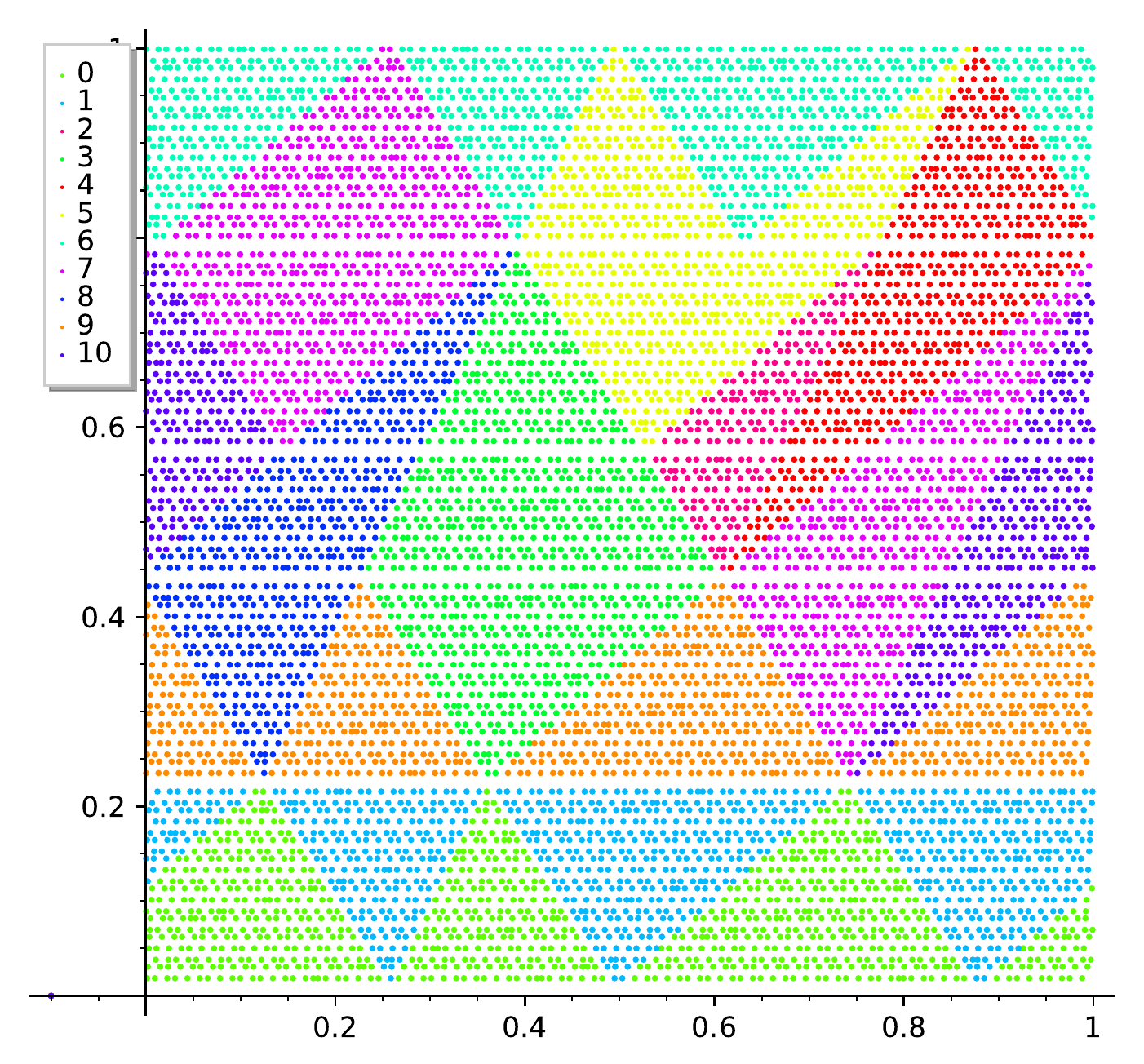}\caption{The result of applying $A^{-1}$ to each tile location in a 100x100 sized patch of a Jeandel-Rao Wang tiling and coloring each point according to the label of the corresponding tile.} \label{fig:JR-Markov-Dots}
\end{center}
\end{figure}

Comparing Figures \ref{fig:P0} and \ref{fig:JR-Markov-Dots}, we see that the dot pattern does indeed capture the atoms of the original partition. It should be noted here that the tiling used to generate the colored dot pattern was not produced using the partition $\mathcal{P}_0$; instead, it was produced using the Glucose solver, which implements a brute force method to finding a tiling. This suggests that the partition is fundamental to the space $\Omega_{0}$, which is the main thrust of what Labb\'{e} proved in \cite{Labb2021}.

\section{The Penrose and Ammann Wang Shifts}\label{sec:Penrose_Ammann}
In \cite[Section 11.1]{GS1987}, the authors discuss methods for converting low-order aperiodic protosets into low-order aperoidic Wang tile protosets, including the order-2 Penrose aperiodic protosets P2 and P3 and the order-2 Ammann aperiodic protoset A2. In particular, there we find an outline of how to convert Penrose tile protosets to two different aperiodic Wang tile protosets - one of order 24 (from P3) and one of order 16 (from P2). We also find that the Ammann A2 protosets can be converted to an order 16 Wang tile protoset, which, interestingly, is the exact same order-16 Wang tile protoset obtained from the Penrose P3 protoset, suggesting that P2 and A2 are really the same on some fundamental level. More recently, in H. Jang's recent Ph.D. thesis \cite[Lemma 6.2.2, p. 75]{Jang2021}, it was shown that any tiling by Penrose rhombs P3 is equivalent to a Wang tiling from the set of 24 Wang tiles mentioned above.

\subsection{The Order-24 Penrose Wang Tile Protoset}\label{subsec:Pen_24}
Following the method suggested in \cite[Section 11.1]{GS1987}, a tiling by the Penrose rhombs (P3) can be coverted to a tiling by Wang tiles using the patches of tiles shown in Figure \ref{fig:Pen_patch_proto}. This protoset of 24 Wang tiles, as was proven in \cite{Jang2021}, produces tilings in exact correspondence with the tilings produced by the P3 protoset.

\begin{figure}[H]
\centering
\begin{subfigure}[H]{.32\textwidth} 
\centering
\includegraphics[width=.87\textwidth]{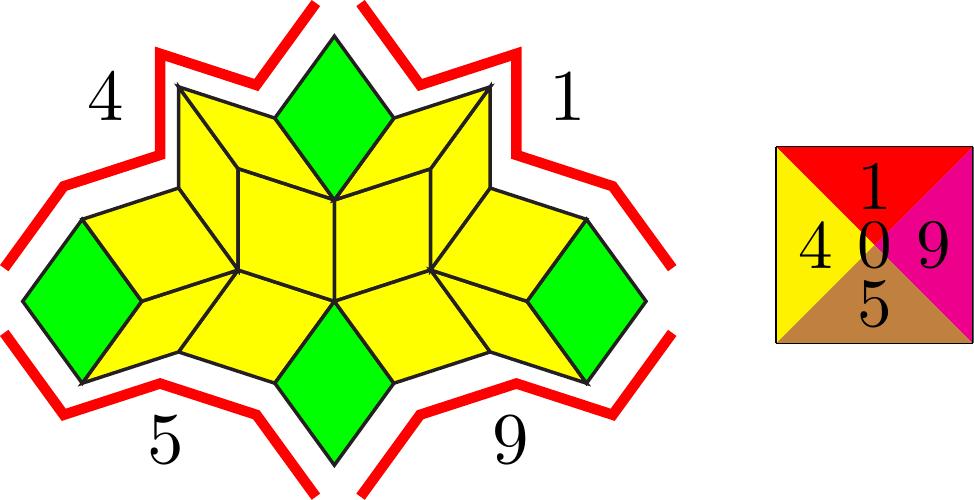}  
\end{subfigure}
\begin{subfigure}[H]{.32\textwidth} 
\centering
\includegraphics[width=.87\textwidth]{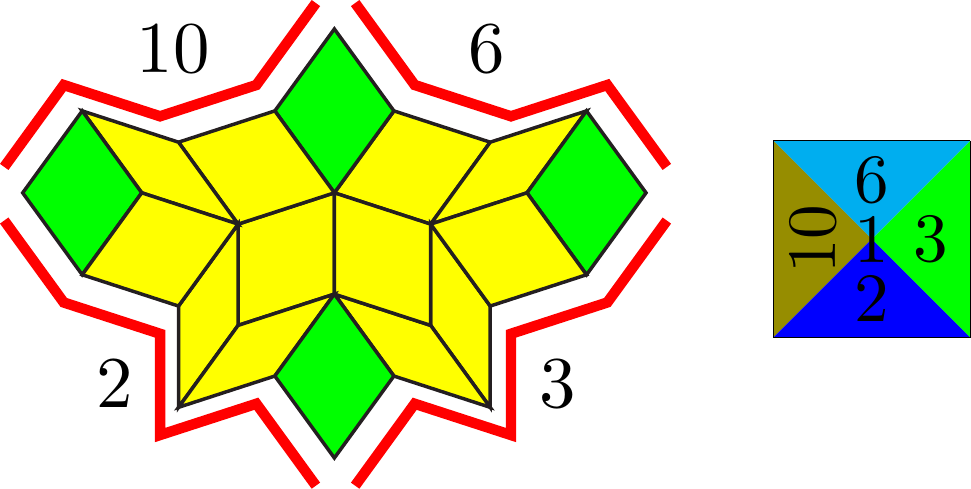}  
\end{subfigure}
\begin{subfigure}[H]{.32\textwidth} 
\centering
\includegraphics[width=.87\textwidth]{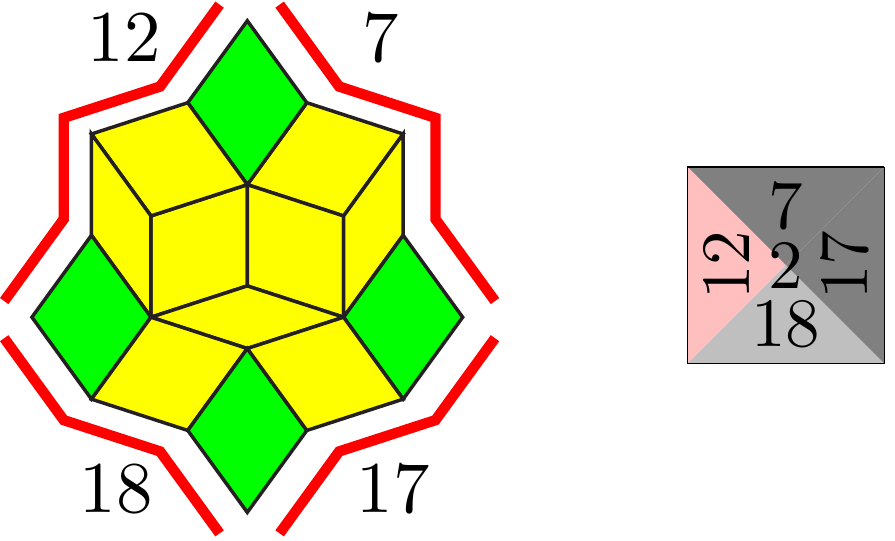}  
\end{subfigure}
\begin{subfigure}[H]{.32\textwidth} 
\centering
\includegraphics[width=.87\textwidth]{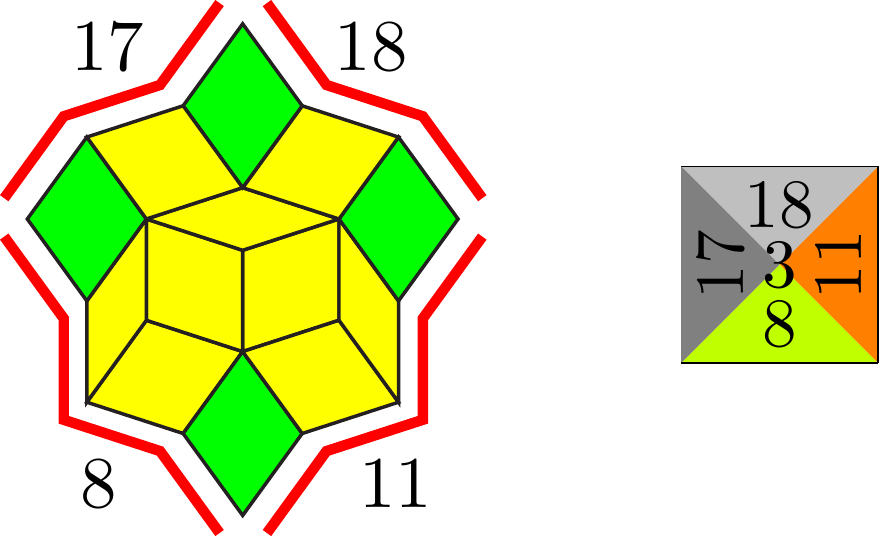}  
\end{subfigure}
\begin{subfigure}[H]{.32\textwidth} 
\centering
\includegraphics[width=.87\textwidth]{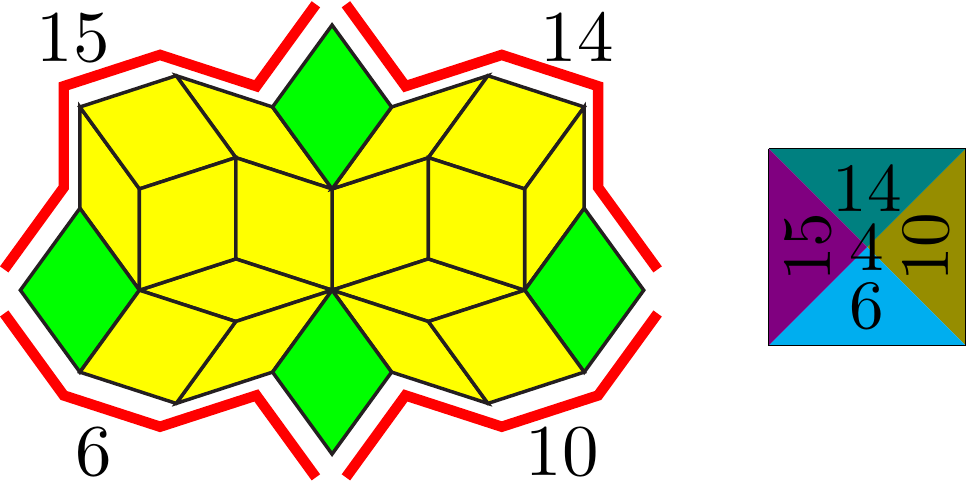}  
\end{subfigure}
\begin{subfigure}[H]{.32\textwidth} 
\centering
\includegraphics[width=.87\textwidth]{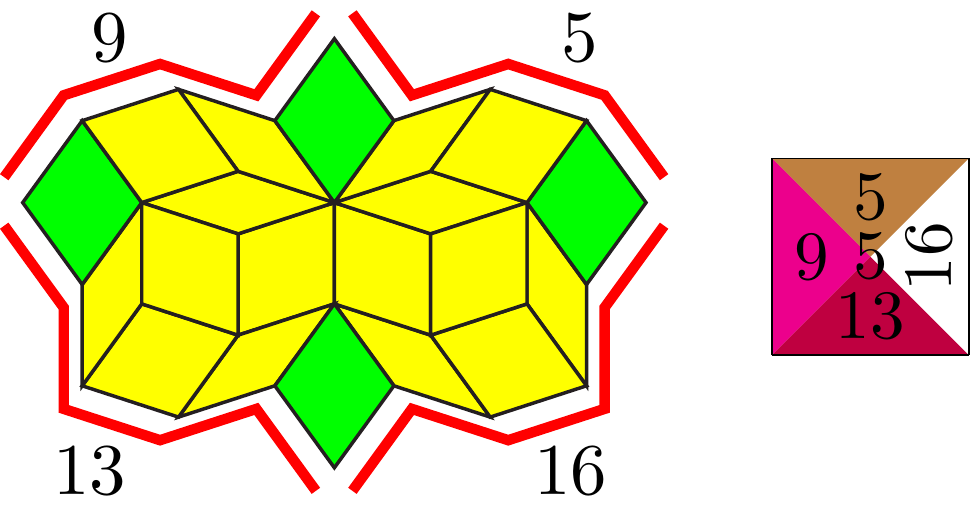}  
\end{subfigure}
\begin{subfigure}[H]{.32\textwidth} 
\centering
\includegraphics[width=.87\textwidth]{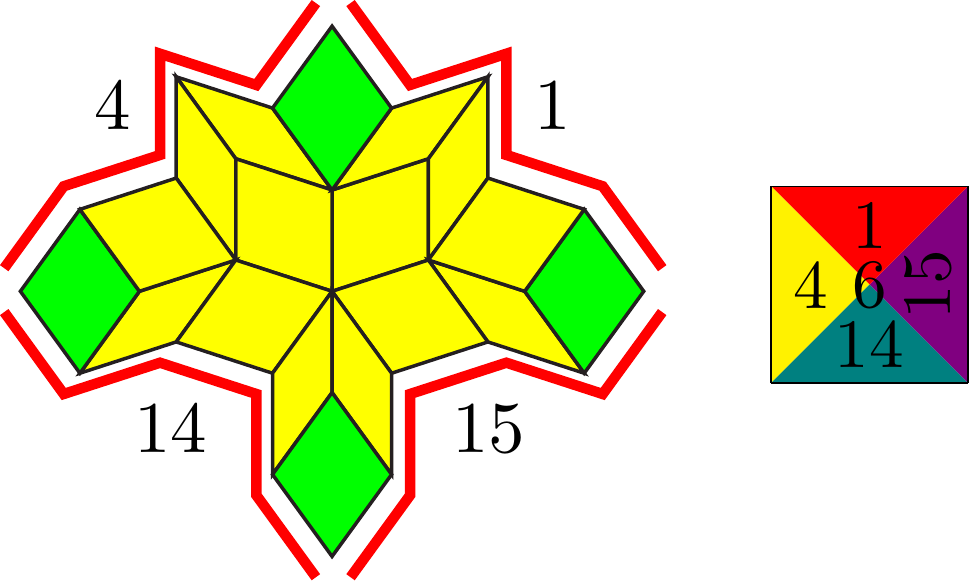}  
\end{subfigure}
\begin{subfigure}[H]{.32\textwidth} 
\centering
\includegraphics[width=.87\textwidth]{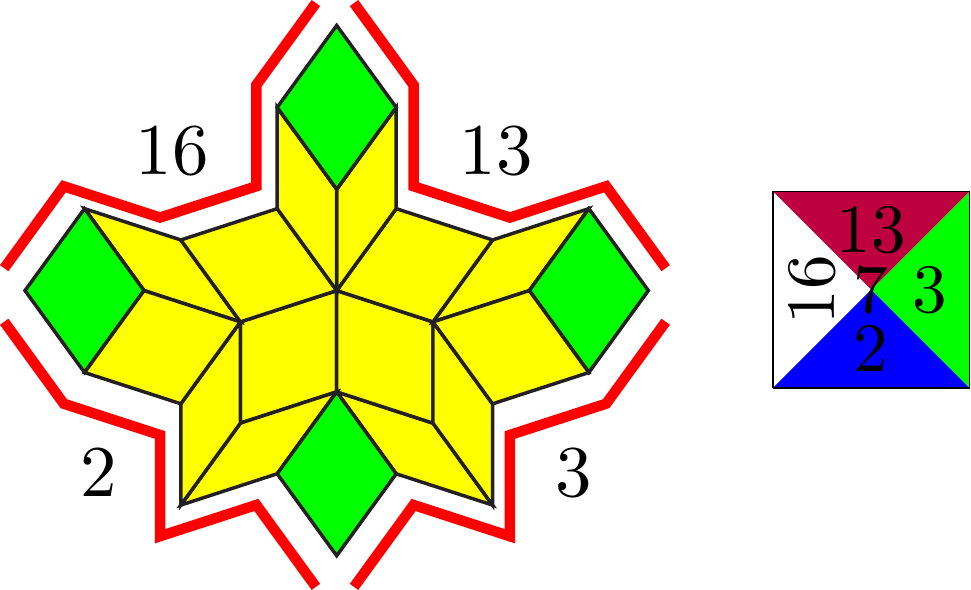}  
\end{subfigure}
\begin{subfigure}[H]{.32\textwidth} 
\centering
\includegraphics[width=.87\textwidth]{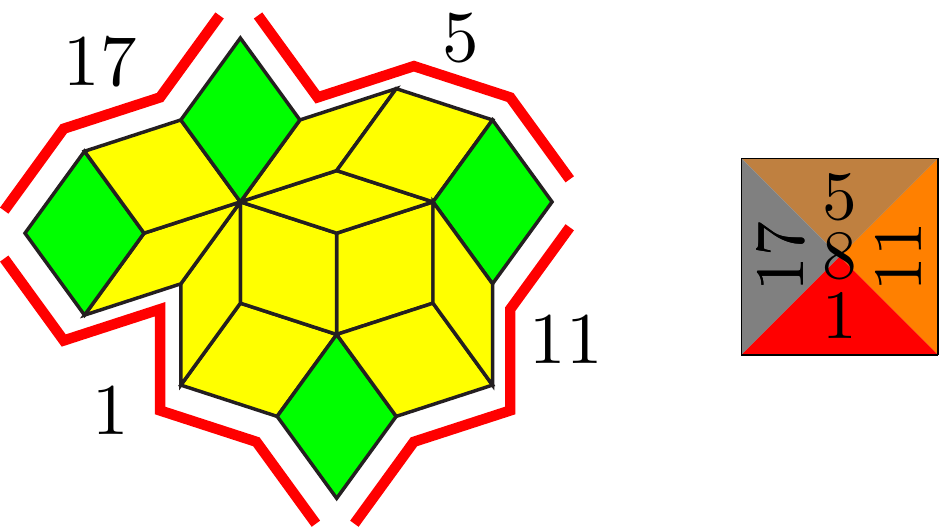}  
\end{subfigure}
\begin{subfigure}[H]{.32\textwidth} 
\centering
\includegraphics[width=.87\textwidth]{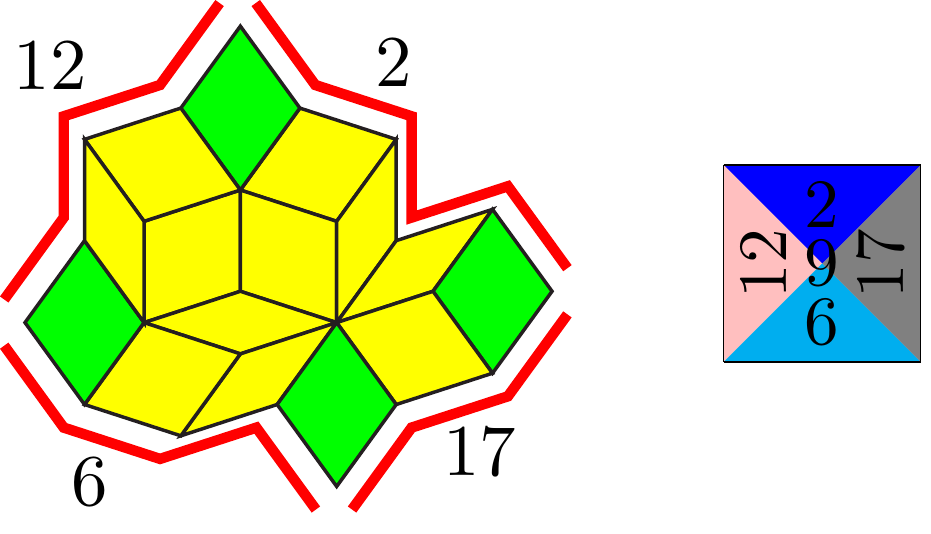}  
\end{subfigure}
\begin{subfigure}[H]{.32\textwidth} 
\centering
\includegraphics[width=.87\textwidth]{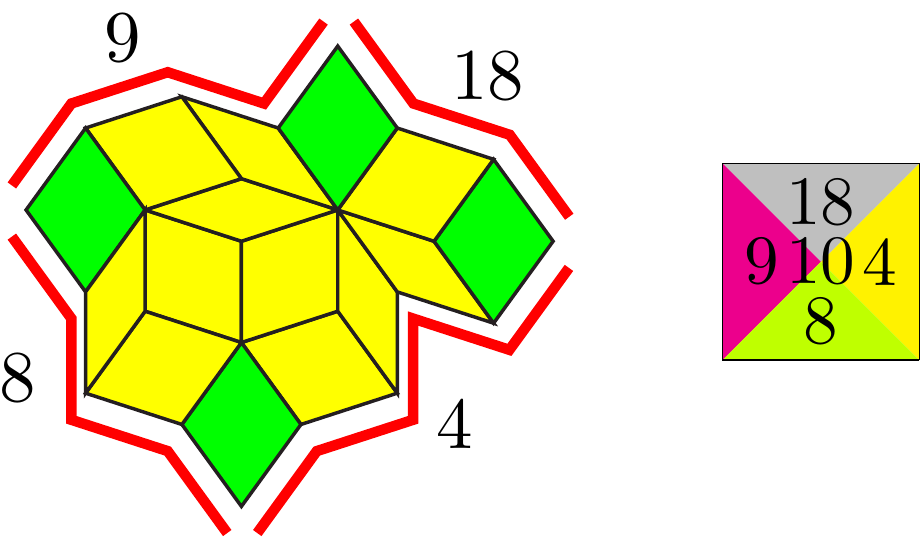}  
\end{subfigure}
\begin{subfigure}[H]{.32\textwidth} 
\centering
\includegraphics[width=.87\textwidth]{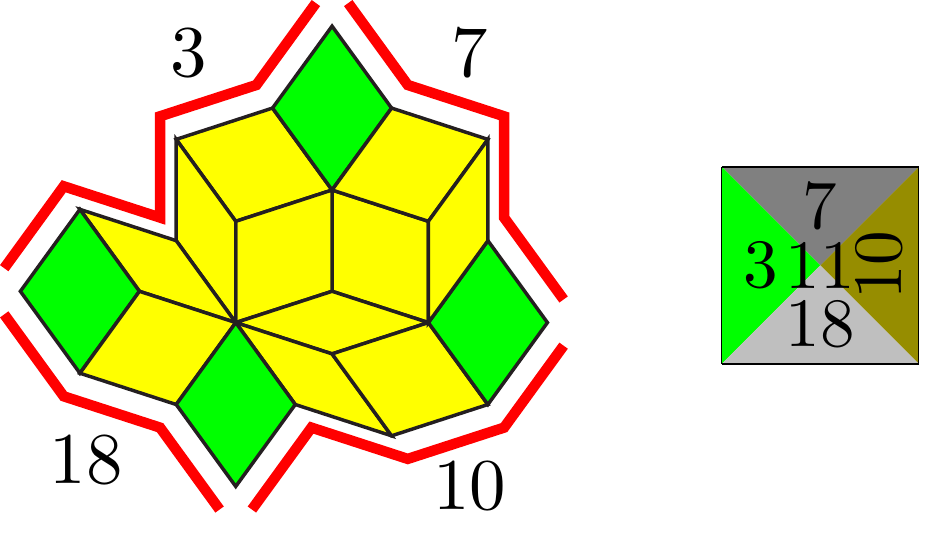}  
\end{subfigure}
\begin{subfigure}[H]{.32\textwidth} 
\centering
\includegraphics[width=.87\textwidth]{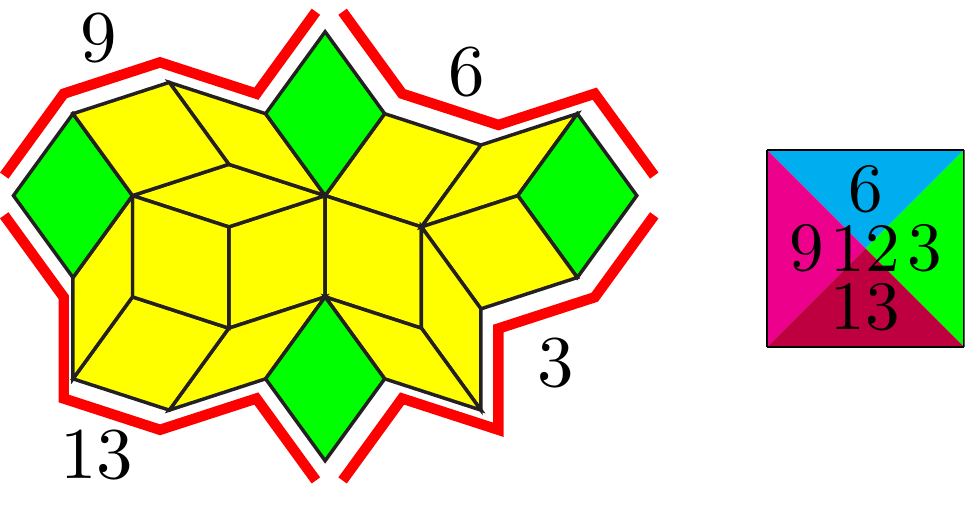}  
\end{subfigure}
\begin{subfigure}[H]{.32\textwidth} 
\centering
\includegraphics[width=.87\textwidth]{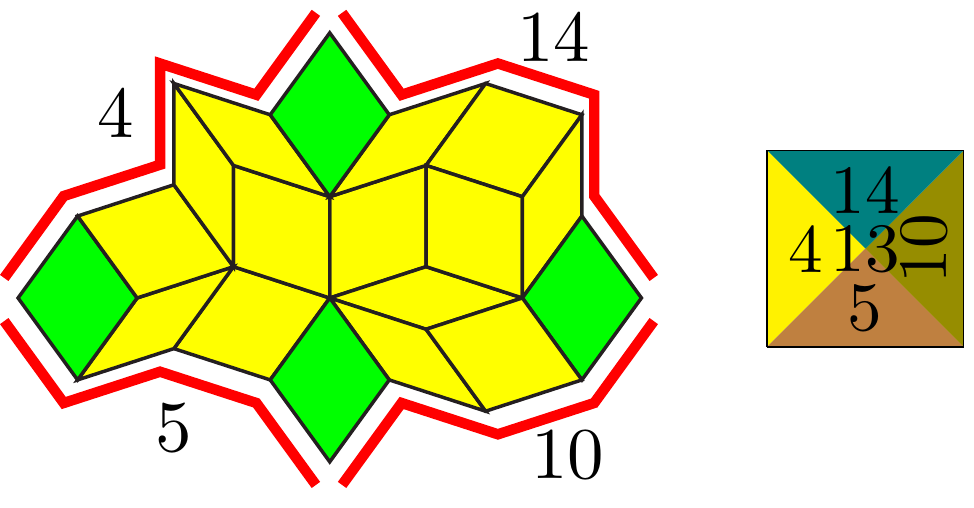}  
\end{subfigure}
\begin{subfigure}[H]{.32\textwidth} 
\centering
\includegraphics[width=.87\textwidth]{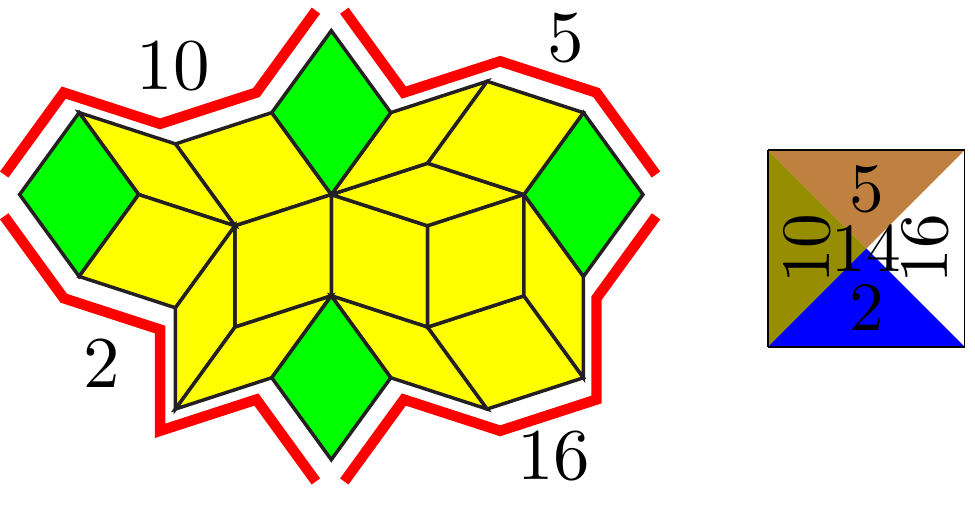}  
\end{subfigure}
\begin{subfigure}[H]{.32\textwidth} 
\centering
\includegraphics[width=.87\textwidth]{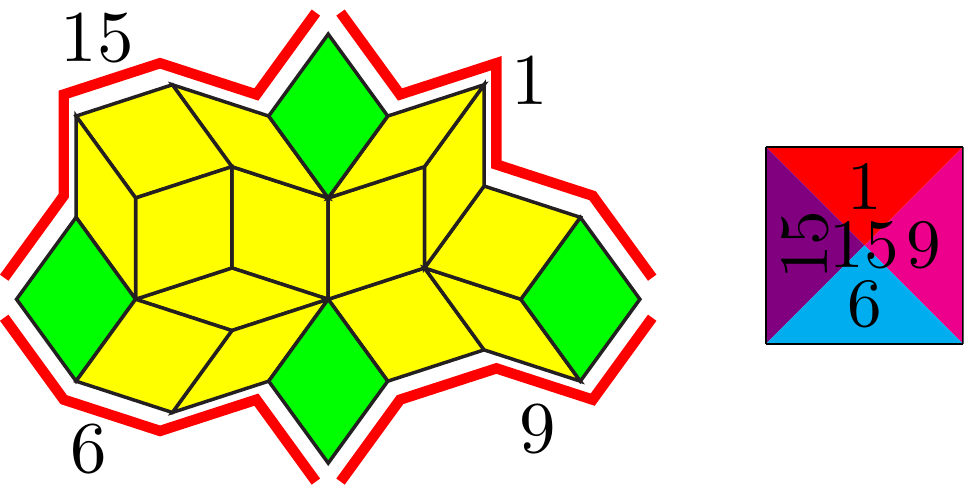}  
\end{subfigure}
\begin{subfigure}[H]{.32\textwidth} 
\centering
\includegraphics[width=.87\textwidth]{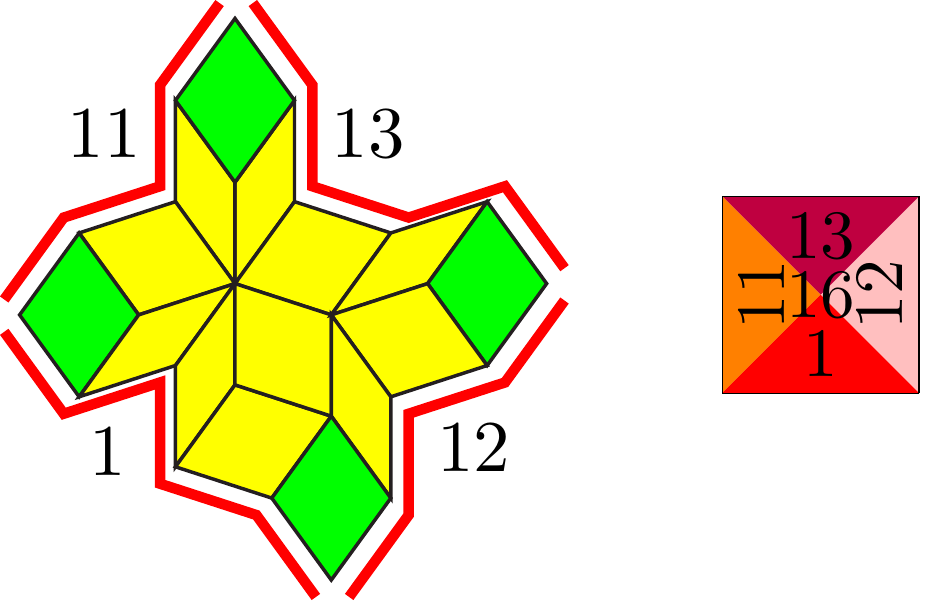}  
\end{subfigure}
\begin{subfigure}[H]{.32\textwidth} 
\centering
\includegraphics[width=.87\textwidth]{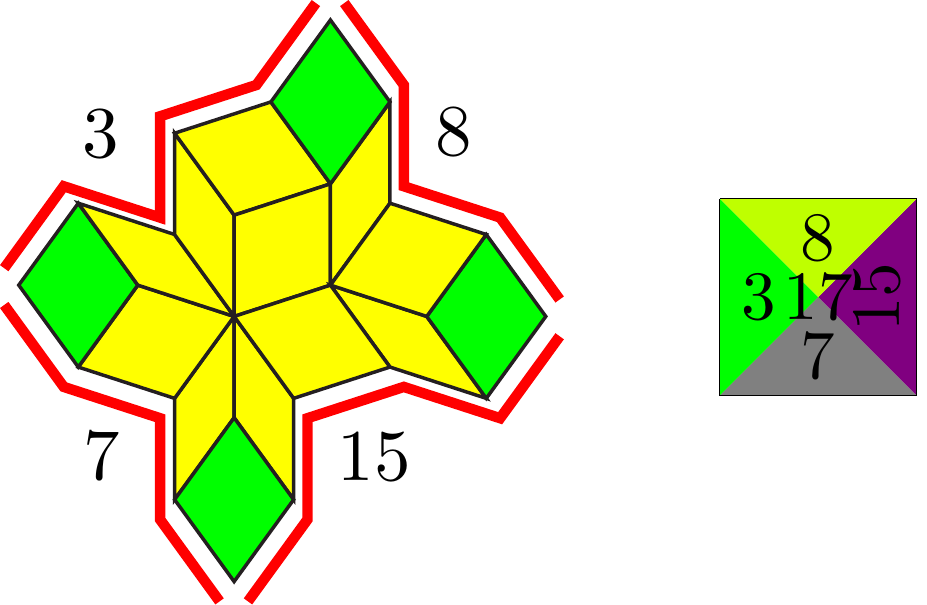}  
\end{subfigure}
\begin{subfigure}[H]{.32\textwidth} 
\centering
\includegraphics[width=.87\textwidth]{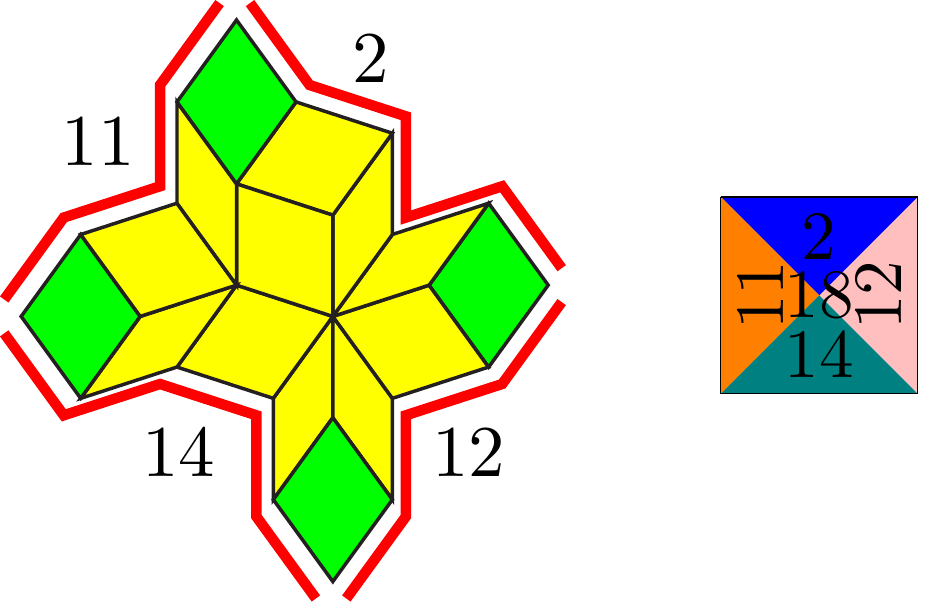}  
\end{subfigure}
\begin{subfigure}[H]{.32\textwidth} 
\centering
\includegraphics[width=.87\textwidth]{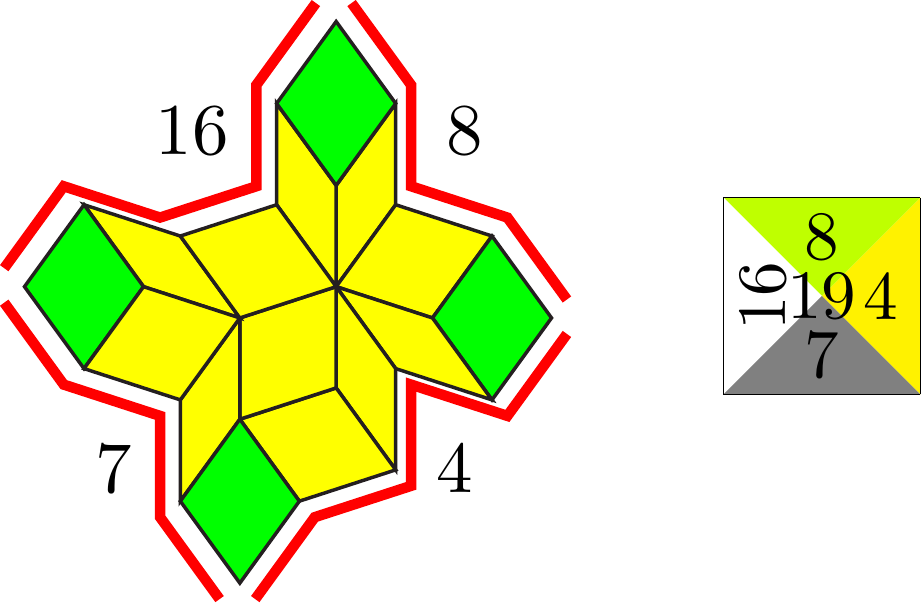}  
\end{subfigure}
\begin{subfigure}[H]{.32\textwidth} 
\centering
\includegraphics[width=.87\textwidth]{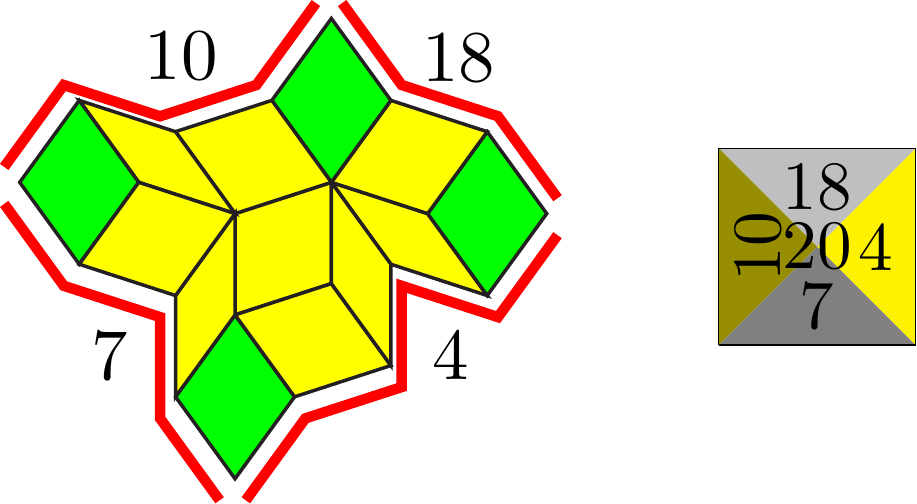}  
\end{subfigure}
\begin{subfigure}[H]{.32\textwidth} 
\centering
\includegraphics[width=.87\textwidth]{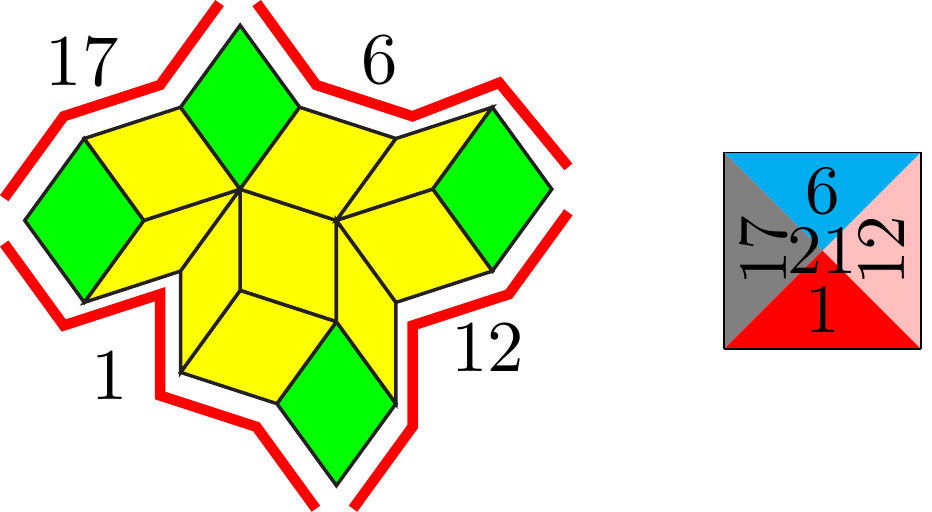}  
\end{subfigure}
\begin{subfigure}[H]{.32\textwidth} 
\centering
\includegraphics[width=.87\textwidth]{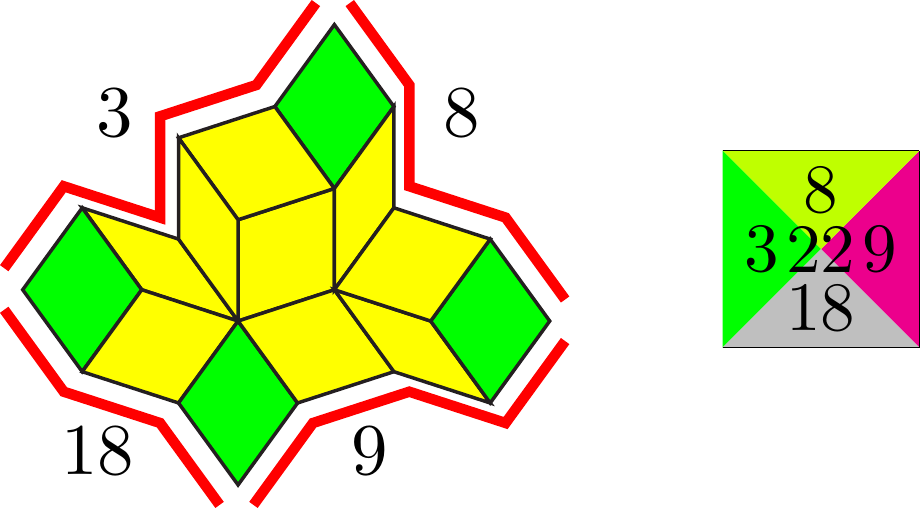}  
\end{subfigure}
\begin{subfigure}[H]{.32\textwidth} 
\centering
\includegraphics[width=.87\textwidth]{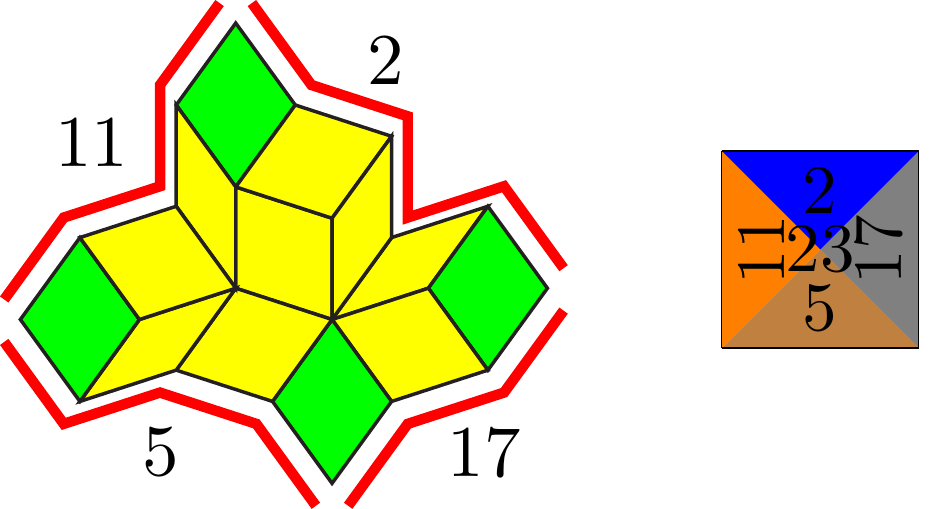}  
\end{subfigure}
\caption{Patches of Penrose rhomb tilings and their corresponding Wang tiles. Red arcs with the same label are translates of one another, and so these labels correspond to colors the sides of the Wang tiles.}\label{fig:Pen_patch_proto}
\end{figure}  

\begin{figure}[h]
\begin{center}
\includegraphics[width=0.8\textwidth]{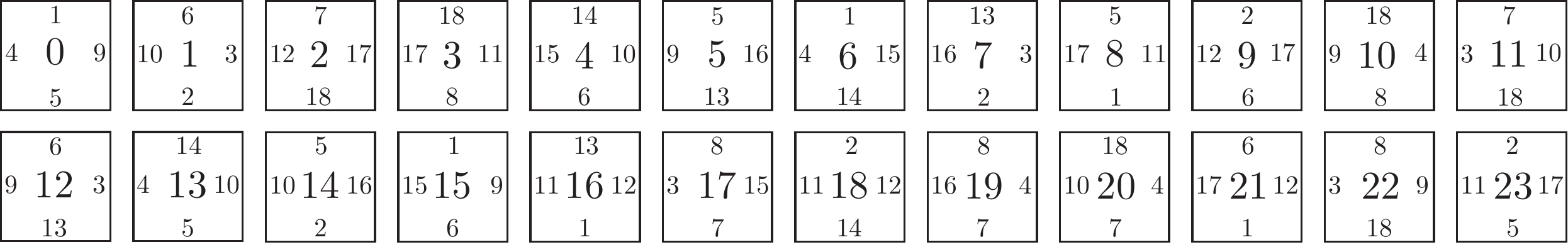}\caption{The order-24 aperiodic Wang protoset, $\mathcal{T}_{24}$ corresponding to P3.} \label{fig:Penrose-24-protoset}
\end{center}
\end{figure}

Let the protoseet of Figure \ref{fig:Penrose-24-protoset} be denoted by $\mathcal{T}_{24}$. We may use the computer to produce a large patch of tiling, and then using a matrix \[A_{24} = \left(\begin{matrix} \varphi & 0\\
0 & \varphi\end{matrix}\right),\] we can produce a dot pattern that reveals a partition for $\Omega_{\mathcal{T}_{24}}$.

\begin{lstlisting}
    Penrose24 = [(9,1,4,5),(3,6,10,2),(17,7,12,18),(11,18,17,8),
             (10,14,15,6),(16,5,9,13),(15,1,4,14),(3,13,16,2),
             (11,5,17,1),(17,2,12,6),(4,18,9,8),(10,7,3,18),
             (3,6,9,13),(10,14,4,5),(16,5,10,2),(9,1,15,6),
             (12,13,11,1),(15,8,3,7),(12,2,11,14),(4,8,16,7),
             (4,18,10,7),(12,6,17,1),(9,8,3,18),(17,2,11,5)]
    Pen24 = WangTileSet(Penrose24)
    PenSol24 = Pen24.solver(130,130)
    Pen24_tiling = PenSol24.solve(solver='glucose')
    
    A = matrix.column([[p,0],[0,p]])
    G = Pen24_tiling.plot_points_on_torus(A.inverse())
    show(G,aspect_ratio=1,figsize=8)
\end{lstlisting}

\begin{figure}[h]
\begin{center}
\includegraphics[width=0.75\textwidth]{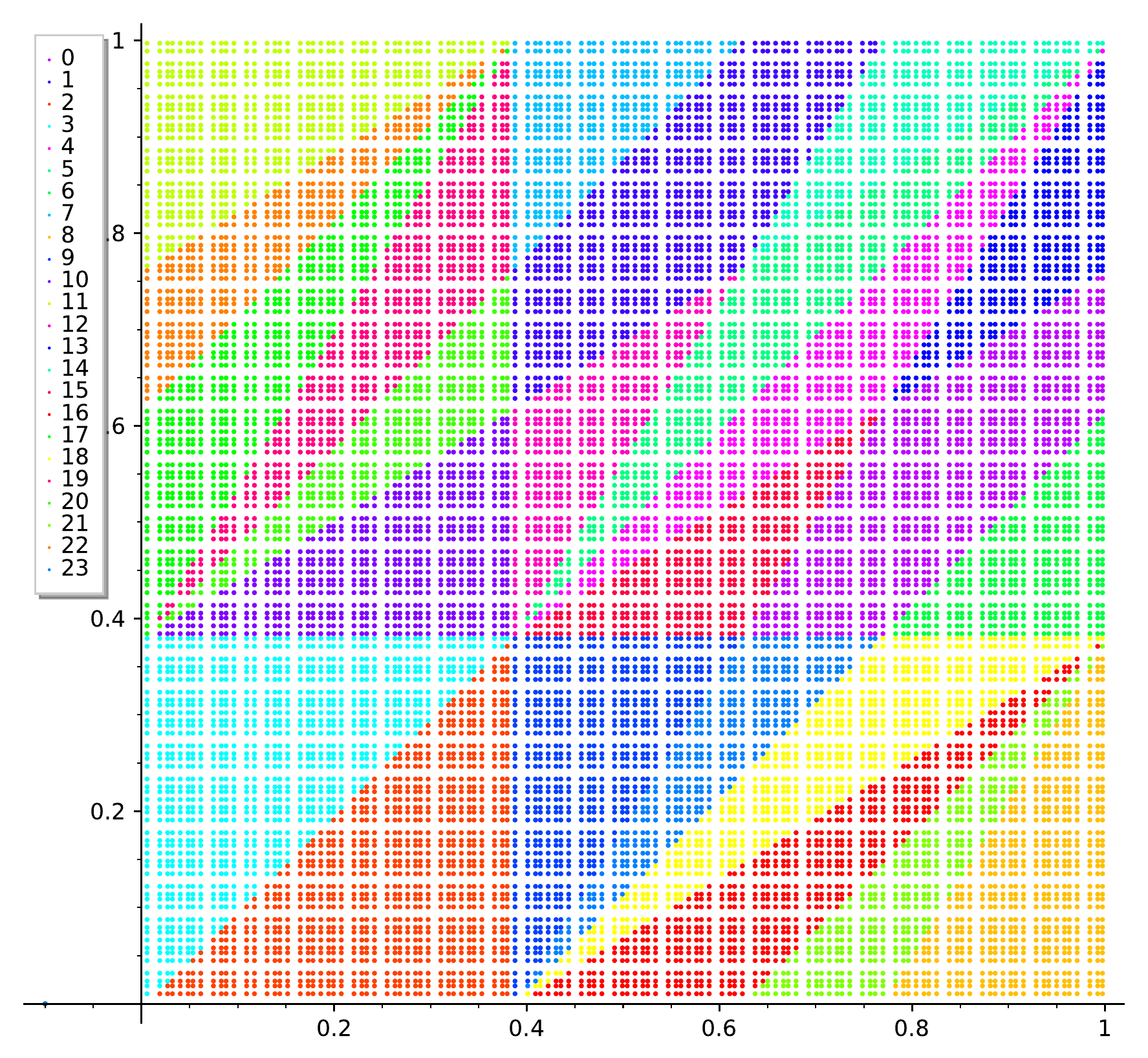}\caption{The result of applying $A^{-1}$ to each tile location in a 130x130 sized patch of a $\mathcal{T}_{24}$ tiling and coloring each point according to the label of the corresponding tile.} \label{fig:Penrose_24_dots}
\end{center}
\end{figure}

From this dot pattern, we can guess at the exact slopes of the lines and produce an exact partition of the torus $\mathbf{T} = \RR^2 \pmod{\Z^2}$, viewed as a quotient of the unit square. We have created such a partition in Figure \ref{fig:Pen_24_partition}. To simplify things, at right Figure \ref{fig:Pen_24_partition} we have scaled the partition by a factor of $\varphi$ to obtain a partition $\mathcal{P}_{24}$ , and we may consider the torus $\mathbf{T} = \RR^2 \pmod{\Gamma_{24}}$ where $\Gamma_{24} = \left<(\varphi,0),(0,\varphi)\right>$ with $\Z^2$ action $R_{24}$ defined on $\mathbf{T}$ by $R_{24}^{\mathbf{n}}(p) = p + \mathbf{n} \pmod{\mathbf{T}}$. From these ingredients, we can now consider the space $\mathcal{X}_{\mathcal{P}_{24},R_{24}}$ and testable hypotheses concerning that space that lends itself to the techniques developed by Labb\'{e} in \cite{Labb2021}, such as:

\begin{itemize}
    \item Are configurations in $\mathcal{X}_{\mathcal{P}_{24},R_{24}}$ always valid tilings in $\Omega_{\mathcal{T}_{24}}$? I.e., Is $\mathcal{X}_{\mathcal{P}_{24},R_{24}} \subseteq \Omega_{\mathcal{T}_{24}}$?
    \item Is $\mathcal{X}_{\mathcal{P}_{24},R_{24}}$ a shift of finite type?
    \item Is $\mathcal{X}_{\mathcal{P}_{24},R_{24}}$ minimal in $\Omega_{\mathcal{T}_{24}}$?
    \item Is it true that $\mathcal{X}_{\mathcal{P}_{24},R_{24}}= \Omega_{\mathcal{T}_{24}}$, so that the partition $\mathcal{P}_{24}$ produces all possible Penrose tilings?
    \item Will $\mathcal{X}_{\mathcal{P}_{24},R_{24}}$ exhibit nonexpansive directions, as corresponding space $\mathcal{X}_{\mathcal{P}_{0},R_{0}}$ did for the Jeandel-Rao shift?
\end{itemize}

\begin{figure}[h]
\begin{center}
\includegraphics[width=\textwidth]{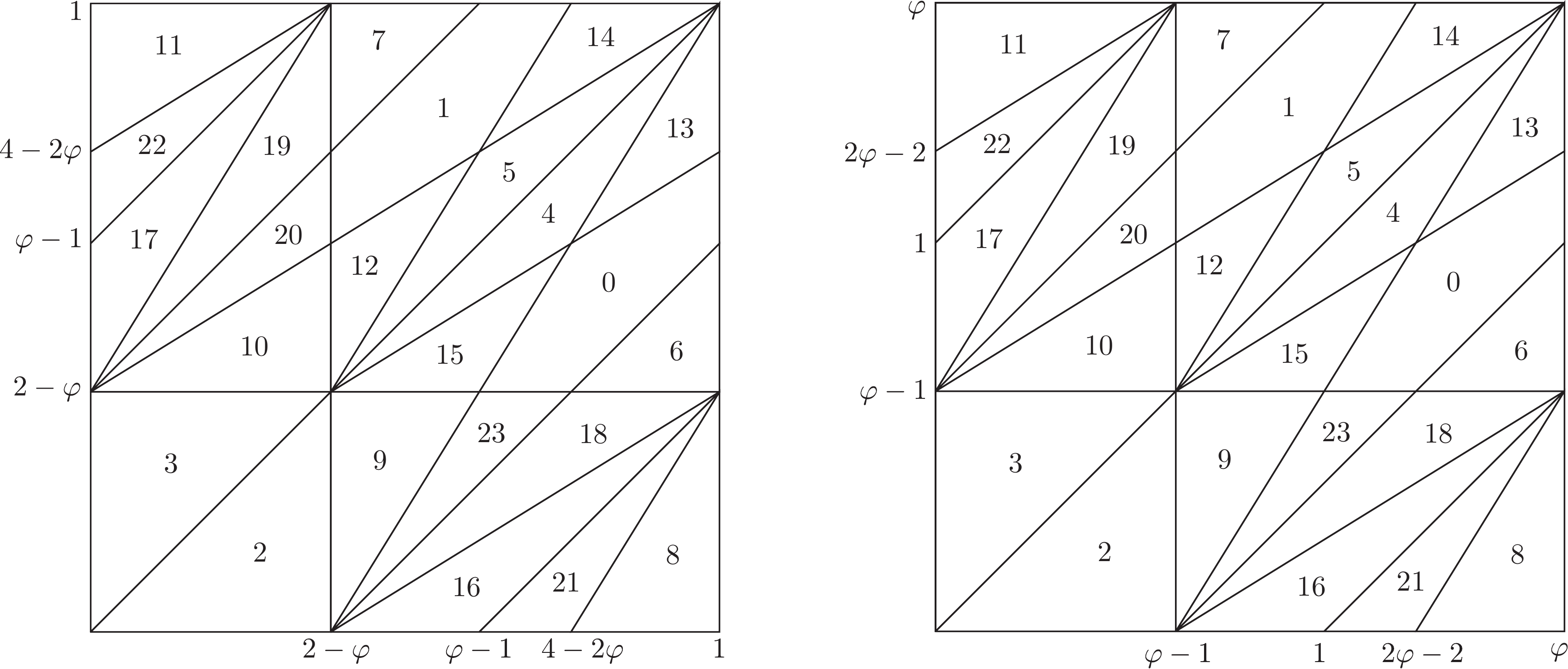}\caption{The partition deduced from the dot pattern of Figure \ref{fig:Penrose_24_dots} is at left. At right, this partition is scaled by a factor of $\varphi$, giving us partition $\mathcal{P}_{24}$, which makes the formula for the $\Z^2$ action $R_{24}$ on the partitioned torus simpler. The slopes of lines in the partition are $0$, $1$, $\infty$, $\varphi$, and $1/\varphi$.} \label{fig:Pen_24_partition}
\end{center}
\end{figure}

We will pursue questions such as these in Appendix \ref{apx:app}, but for now, we will continue with this experimental approach to finding a few other potential interesting partitions for Wang tile protosets.

\subsection{The Order-16 Ammann A2 Wang Tile Protoset}\label{subsec:Amm_16}
The Ammann A2 order-2 aperiodic protoset is shown in Figure \ref{fig:Ammann_A2}. Notice at bottom in Figure \ref{fig:Ammann_A2} that the tiles are adorned with red and blue lines called \emph{Ammann bars}. The matching rule is that when two copies of tiles from A2 meet, Ammann bars of the same color must continue in a straight line. In Figure \ref{fig:A2_patch}, we see how the matching rules are enforced in a small patch of tiling for the A2 protoset. 

\begin{figure}[h]
\begin{center}
\includegraphics[width=0.5\textwidth]{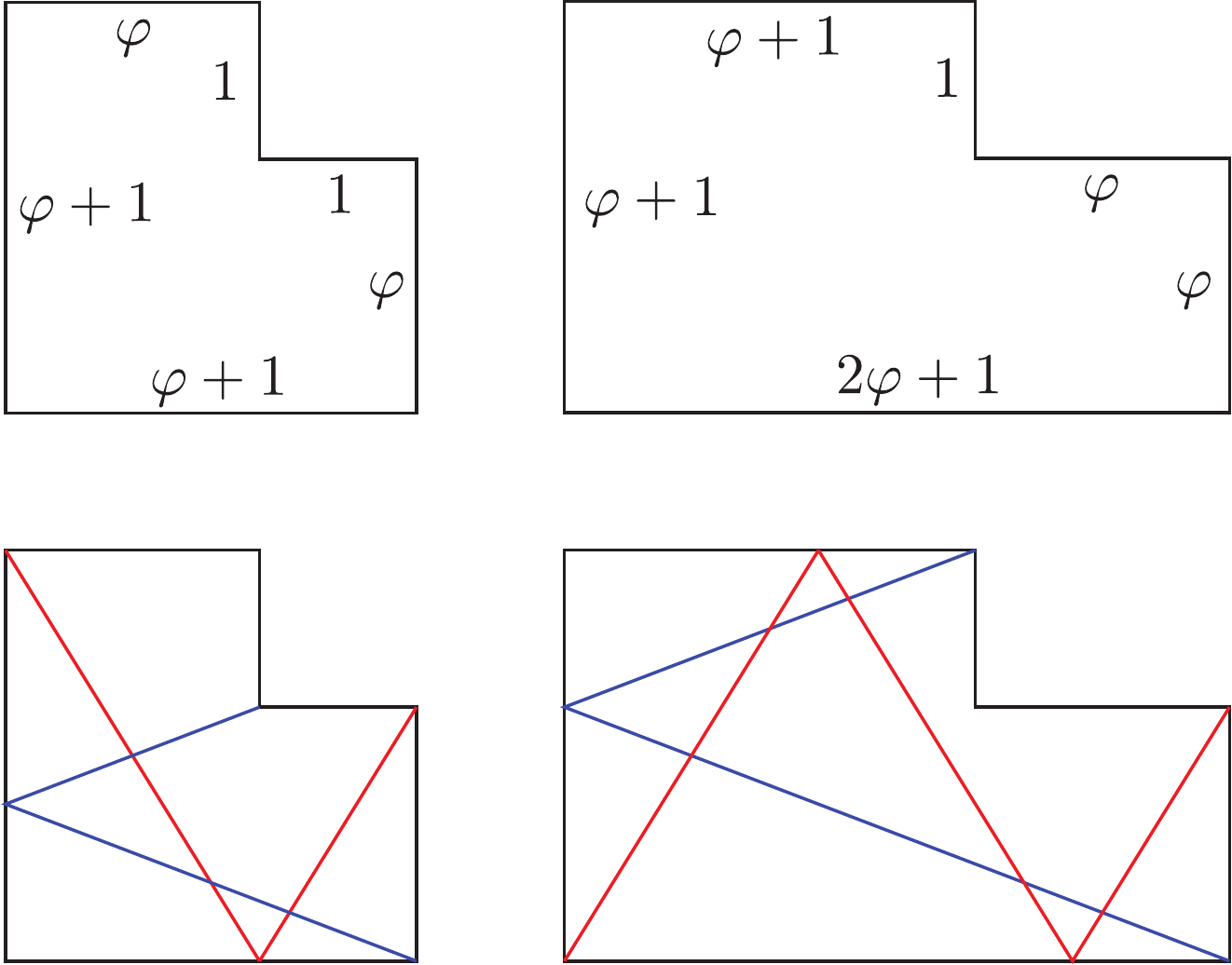}\caption{The Ammann A2 order-2 aperiodic protset. The top row shows the dimensions of the two tiles ($\varphi$ is the golden mean), and on the bottom row, the tiles are adorned with decorations called \emph{Ammann bars} that indicate the matching rules that enforce aperidocity for A2.} \label{fig:Ammann_A2}
\end{center}
\end{figure}

\begin{figure}[h]
\begin{center}
\includegraphics[width=0.5\textwidth]{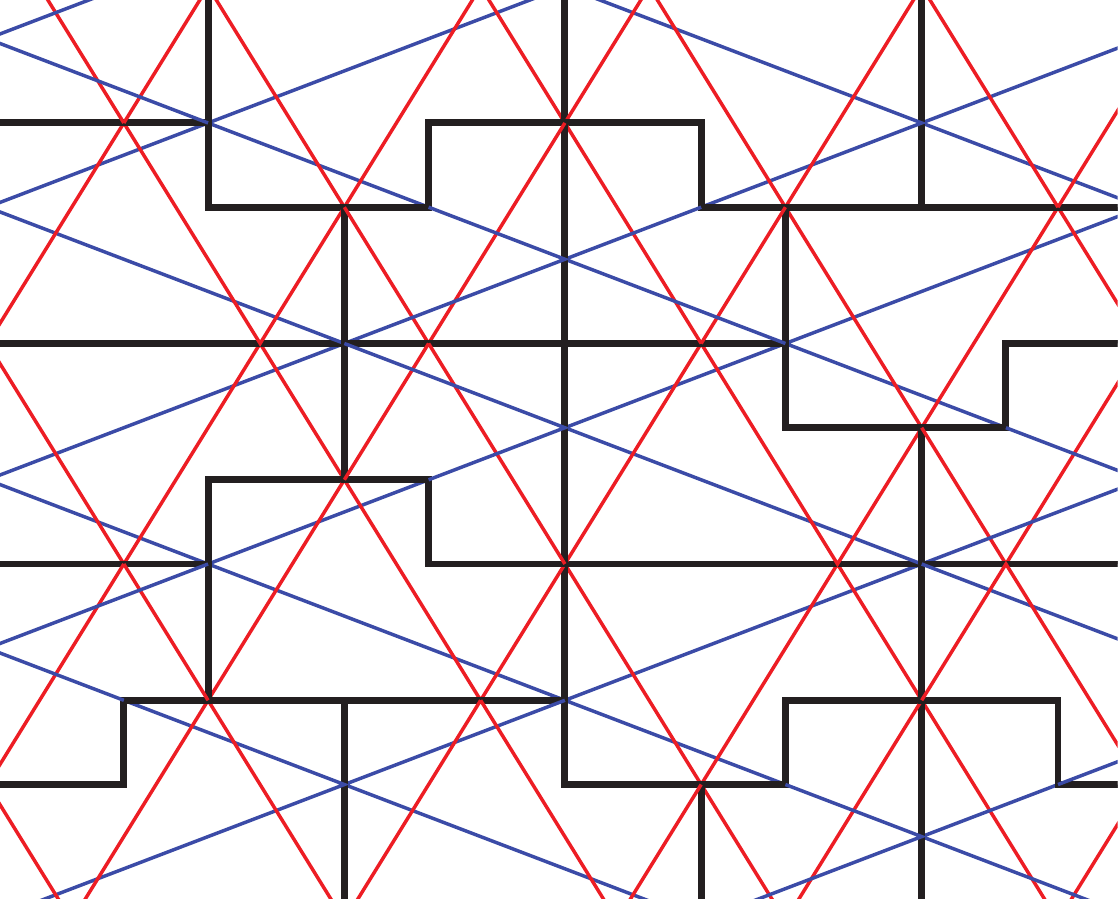}\caption{A patch of tiling formed from the A2 protoset} \label{fig:A2_patch}
\end{center}
\end{figure}

The key to converting the A2 protoset to a Wang tile protoset of order 16 is to consider all of the possible rhombi formed from the red lines, with the blue lines serving as the matching rules for the red rhombi, as explained in In \cite[Section 11.1]{GS1987}. The protoset so produced, $\mathcal{T}_{16}$, is shown in Figure \ref{fig:Ammann_16_Protoset}.

\begin{figure}[h]
\begin{center}
\includegraphics[width=0.8\textwidth]{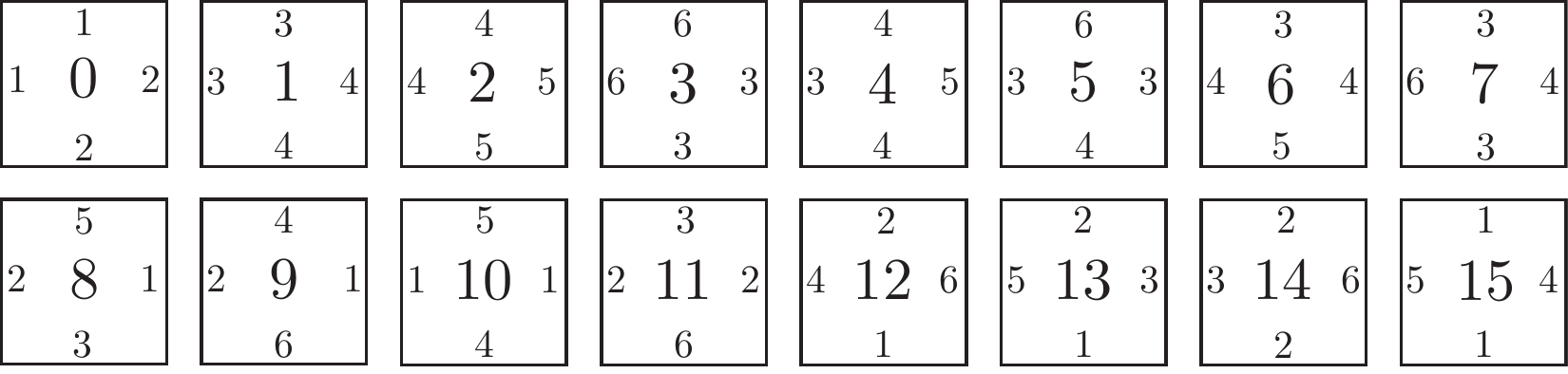}\caption{The order-16 aperiodic Wang tile protoset, $\mathcal{T}_{16}$, derived from the A2 protoset.} \label{fig:Ammann_16_Protoset}
\end{center}
\end{figure}

Again, as we did with the $\mathcal{T}_{24}$, we can use the computer to produce a large patch of tiling, and with the appropriate choice of matrix, we can pull the tiling back onto a torus and see a dot pattern that indicates that a partition underlies the protoset. Indeed, using \[A_{16} = \left( \begin{matrix}
    \varphi & 0\\
    0 & \varphi
\end{matrix}\right), \] we obtain the nice dot pattern shown in Figure \ref{fig:A2_dots}. Interestingly, we see not only that $A_{24} = A_{16}$, but also that the dot patterns for $\mathcal{T}_{24}$ and $\mathcal{T}_{16}$ align - the pattern for P3 seems to be a refinement of the one for A2 (Figure \ref{fig:P3_and_A2_dots}). This is not coincidental. In \cite[Section 11.1]{GS1987} it is pointed out that $\mathcal{T}_{16}$ can be derived from the Penrose darts and kites (P2) protoset using the idea of Ammann bars.

\begin{figure}[h]
\begin{center}
\includegraphics[width=0.8\textwidth]{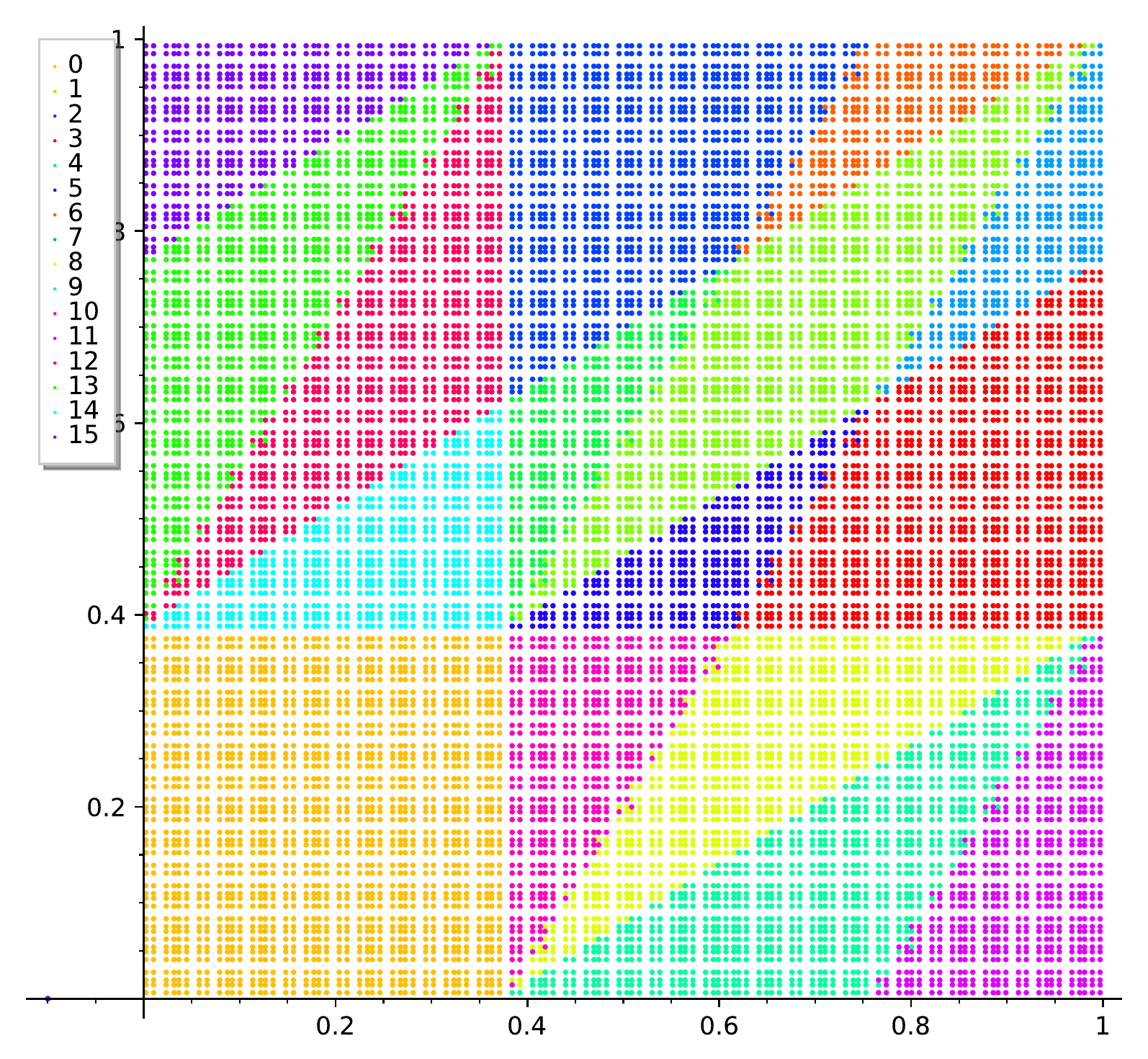}\caption{The dot pattern produced by applying matrix $A_{16}$ to a $120 \times 120$ patch of tiling by the order-16 Wang tile protoset derived from the Ammann A2 aperiodic protoset.} \label{fig:A2_dots}
\end{center}
\end{figure}

\begin{figure}[h]
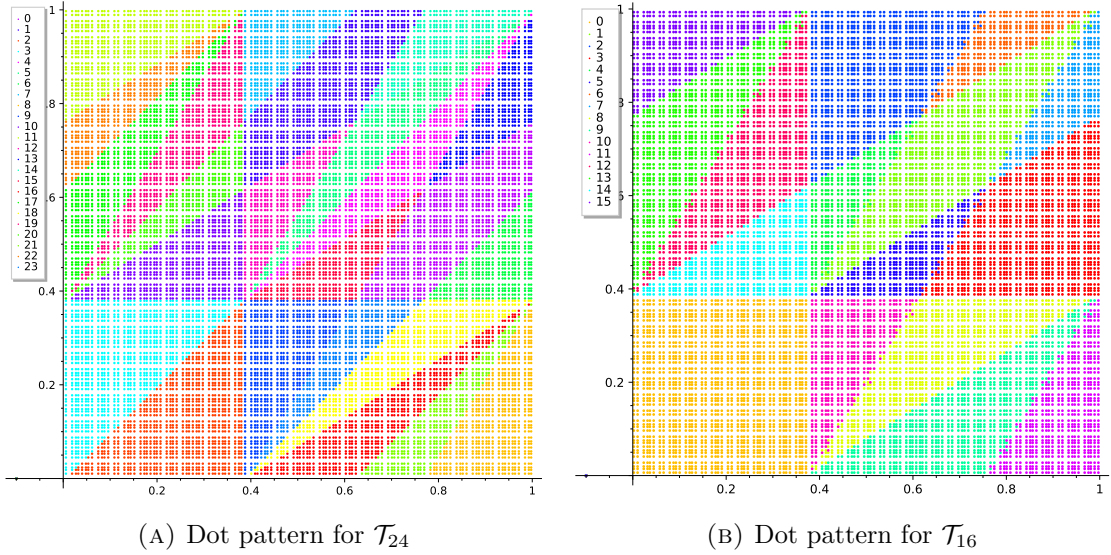

\centering
\begin{subfigure}[b]{.45\textwidth} 
\centering
\includegraphics[width=\textwidth]{figures/Penrose_24_dots.pdf}  
\caption{Dot pattern for $\mathcal{T}_{24}$}
\end{subfigure}
\begin{subfigure}[b]{.45\textwidth} 
\centering
\includegraphics[width=\textwidth]{figures/A2_dots.pdf}  
\caption{Dot pattern for $\mathcal{T}_{16}$}
\end{subfigure}
\caption{The dot patterns for $\mathcal{T}_{24}$ and $\mathcal{T}_{16}$ are remarkably similar; the pattern for $\mathcal{T}_{16}$ can be obtained from the one for $\mathcal{T}_{24}$ by removing the lines of slope 1.}\label{fig:P3_and_A2_dots}\end{figure}

From the dot pattern obtained for $\mathcal{T}_{16}$, we can determine the exact equations of the lines to get a partition $\mathcal{P}_{16}$, as shown in Figure \ref{fig:Amm_16_partition}. We have scaled the partitioned torus for $\mathcal{T}_{16}$ by a factor of $\varphi$ (as we did for $\mathcal{T}_{24}$). Define the torus $\mathbf{T}$ identifying opposites sides of this square $[0,\varphi]\times[0,\varphi]$ and define the $\Z^2$ action $R_{16}$ on $\mathbf{T}$ by $R_{16}^{\mathbf{n}}(p) = p + \mathbf{n} \pmod{\mathbf{T}}$. We may now consider the space $\mathcal{X}_{\mathcal{P}_{16},R_{16}}$ and consider questions like those outlined in Subsection \ref{subsec:Pen_24}.

\begin{figure}[h]
\begin{center}
\includegraphics[width=0.5\textwidth]{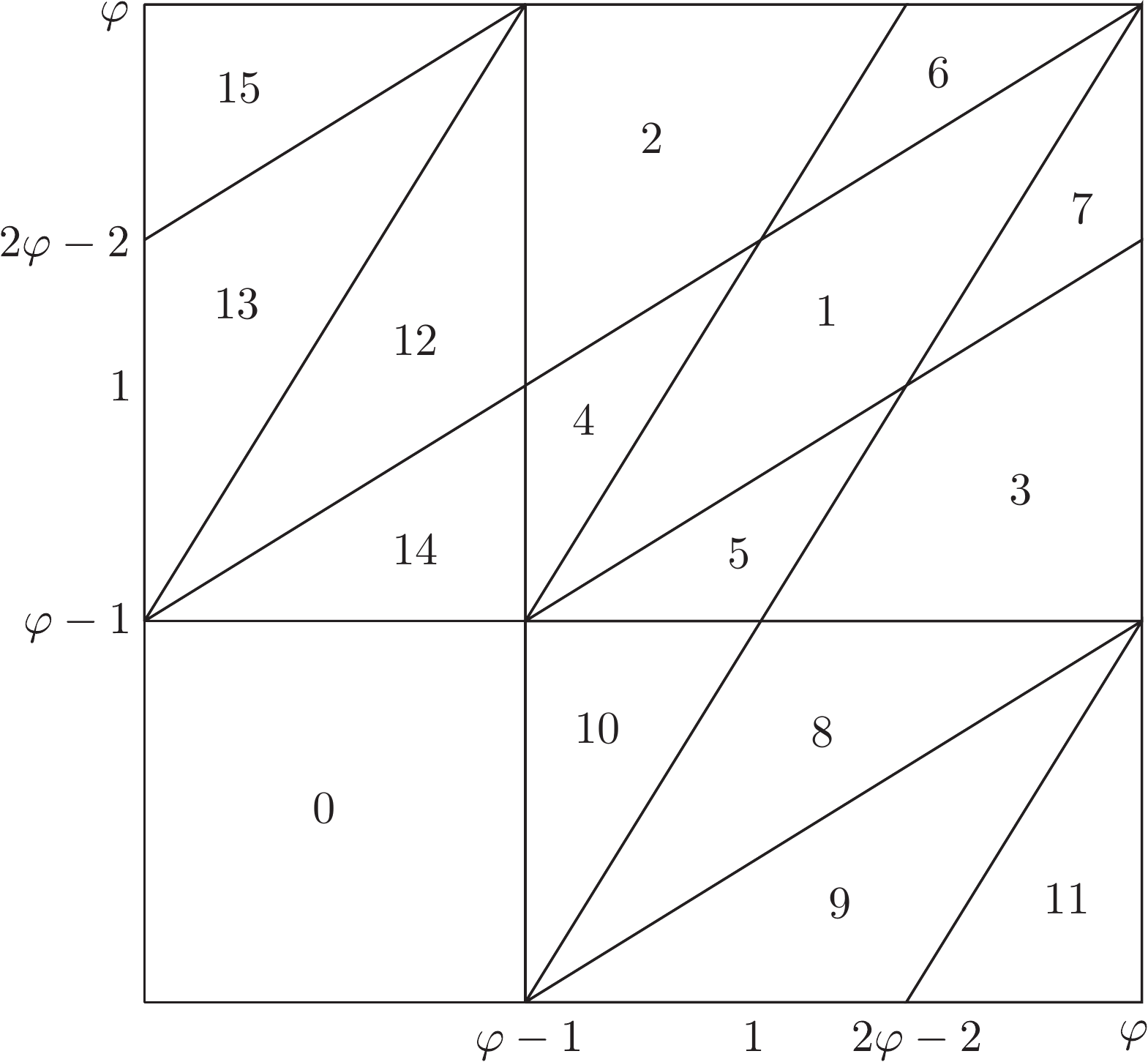}\caption{The partition $\mathcal{P}_{16}$ obtained from the order-16 protoset $\mathcal{T}_{16}$.} \label{fig:Amm_16_partition}
\end{center}
\end{figure}

\subsection{Testing $\mathcal{X}_{\mathcal{P}_{24},R_{24}}$ and $\mathcal{X}_{\mathcal{P}_{16},R_{16}}$}

Before considering more technical theoretical questions about $\mathcal{X}_{\mathcal{P}_{24},R_{24}}$ and $\mathcal{X}_{\mathcal{P}_{16},R_{16}}$, we will test, experimentally, if the configurations in these spaces seem to be valid tilings. In doing so, we will consider orbits of points on the torus that intersect the boundary and points whose orbits do not.

Let $\mathbf{T} = [0,\varphi]\times[0,\varphi]$ where $\mathbf{T}$ has partition $\mathcal{P}_{24}$ and let $p = (0.1, 0.2) \in \mathbf{T}$. We then consider the orbit $\mathcal{O}_{R_{24}}(p) \subset \mathbf{T}$. We wish to see if the tiling (or at least a finite portion we can generate) spelled out by $\mathcal{O}_{R_{24}(p)}$ seems to be valid. This process could be automated, but we will do it ``by hand", drawing the orbit in Figure \ref{fig:Pen_24_orbit} and the corresponding partial configuration in Figure \ref{fig:Pen_24_tiling}. Notice that the partial tiling so generated is valid. Also note that we could easily convert the Wang tiling of Figure \ref{fig:Pen_24_tiling} into a tiling by the Penrose rhombs (P3) simply by substituting the patches described in Figure \ref{fig:Pen_patch_proto} for the corresponding Wang tiles. Similarly, in Figure \ref{fig:Amm_16_orbit} we see part of the orbit $\mathcal{O}_{R_{16}}(p)$ of the point $p = (0.1,0,1)$ in the torus $\mathbf{T}$ partitioned by $\mathcal{P}_{16}$ the corresponding partial tiling by the order-16 Ammann Wang tiles ($\mathcal{T}_{16}$) in \ref{fig:Amm_16_tiling}, which again appears to be valid.

It is not hard to check that the orbit of any point $p \in \mathbf{T}$, under either action $R_{24}$ or $R_{16}$, will be dense in $\mathcal{T}$. Thus, one immediate consequence of $\mathcal{X}_{\mathcal{P}_{24},R_{24}}$ and $\mathcal{X}_{\mathcal{P}_{16},R_{16}}$ being subspaces of $\Omega_{24}$ and $\Omega_{16}$ (respectively) is that the areas of atoms in $\mathcal{P}_{24}$ and $\mathcal{P}_{16}$ correspond to the proportion of tiles of specific kinds in tilings. For example, the area of the atom labeled $0$ in $\mathcal{P}_{16}$ (Figure \ref{fig:Amm_16_partition}) is $(\varphi-1)^2 = 2 - \varphi$, which comprises $(2-\varphi)/\varphi^2 = 5-3\varphi \approx 14.59\%$ of the area of the torus $\mathbf{T}$. This means that in any tiling in $\mathcal{X}_{\mathcal{P}_{16},R_{16}}$, about 14.59\% of the tiles will be copies of the prototile $T_0$.

\begin{figure}[h]
\begin{center}
\includegraphics[width=0.7\textwidth]{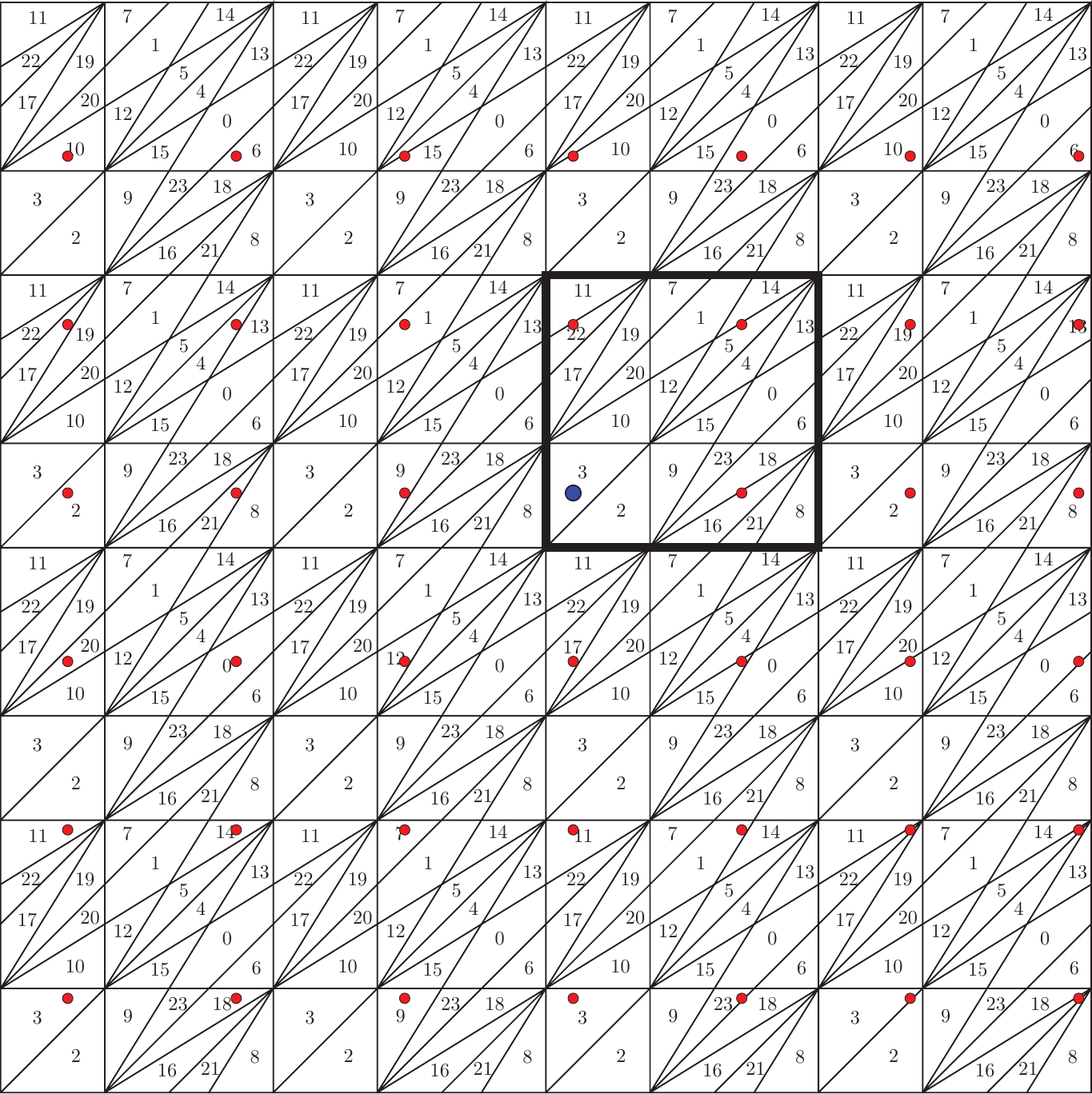}\caption{The orbit of the point $p = (0.1, 0.2)$ under the action $R_{24}$ in the torus $\mathbf{T} = [0,\varphi]\times [0,\varphi]$ where $\mathbf{T}$ has the partition $\mathcal{P}_{24}$. The point $p$ is in blue, and the rest of its orbit is in red.} \label{fig:Pen_24_orbit}
\end{center}
\end{figure}

\begin{figure}[h]
\begin{center}
\includegraphics[width=0.6\textwidth]{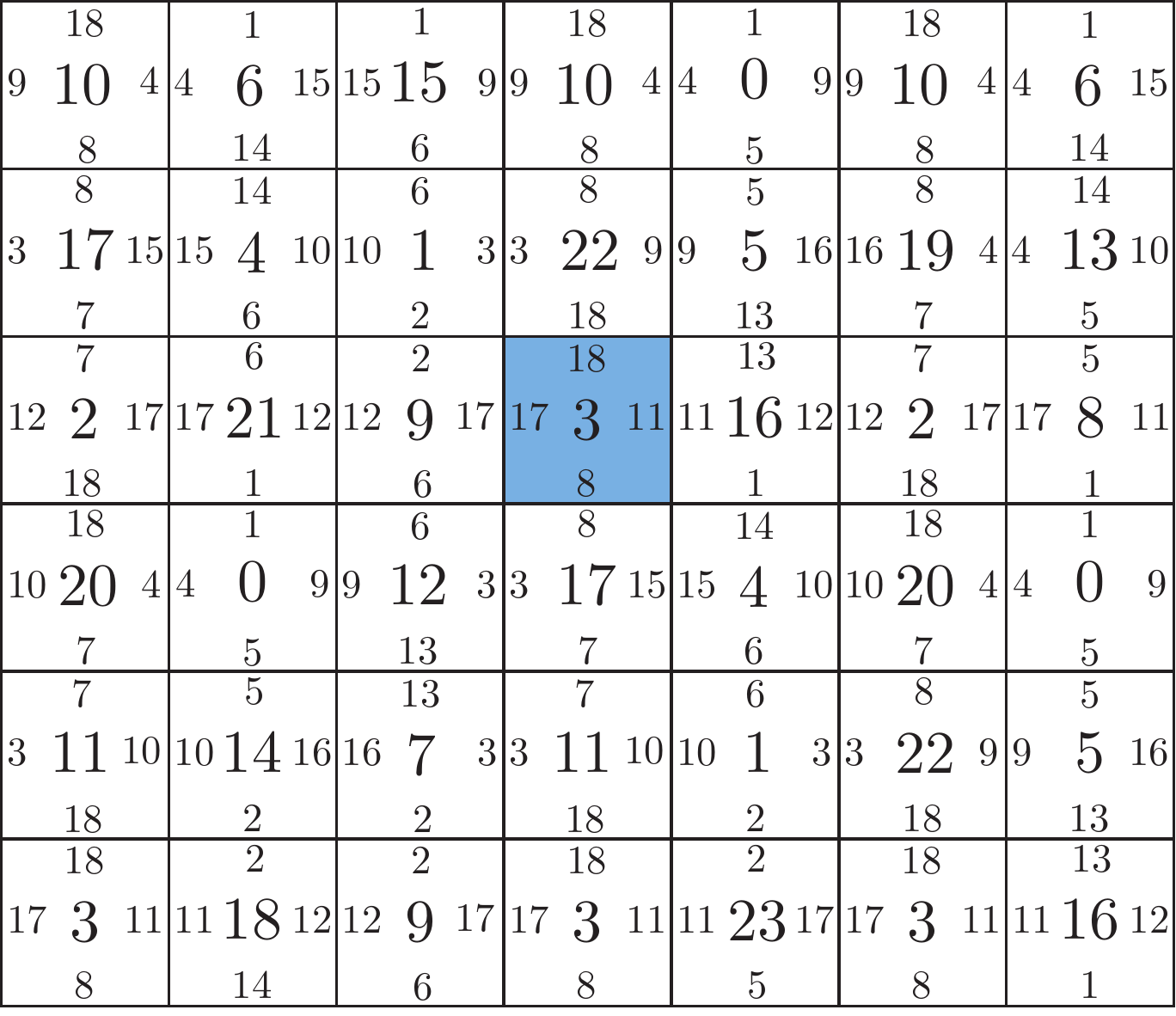}\caption{The $\mathcal{T}_{24}$ tiling corresponding to the orbit $\mathcal{O}_{R_{24}}((0.1,0.2))$ shown in Figure \ref{fig:Pen_24_orbit}. The blue tile corresponds to the point $(0.1,0.2)$.} \label{fig:Pen_24_tiling}
\end{center}
\end{figure}

\begin{figure}[h]
\begin{center}
\includegraphics[width=0.7\textwidth]{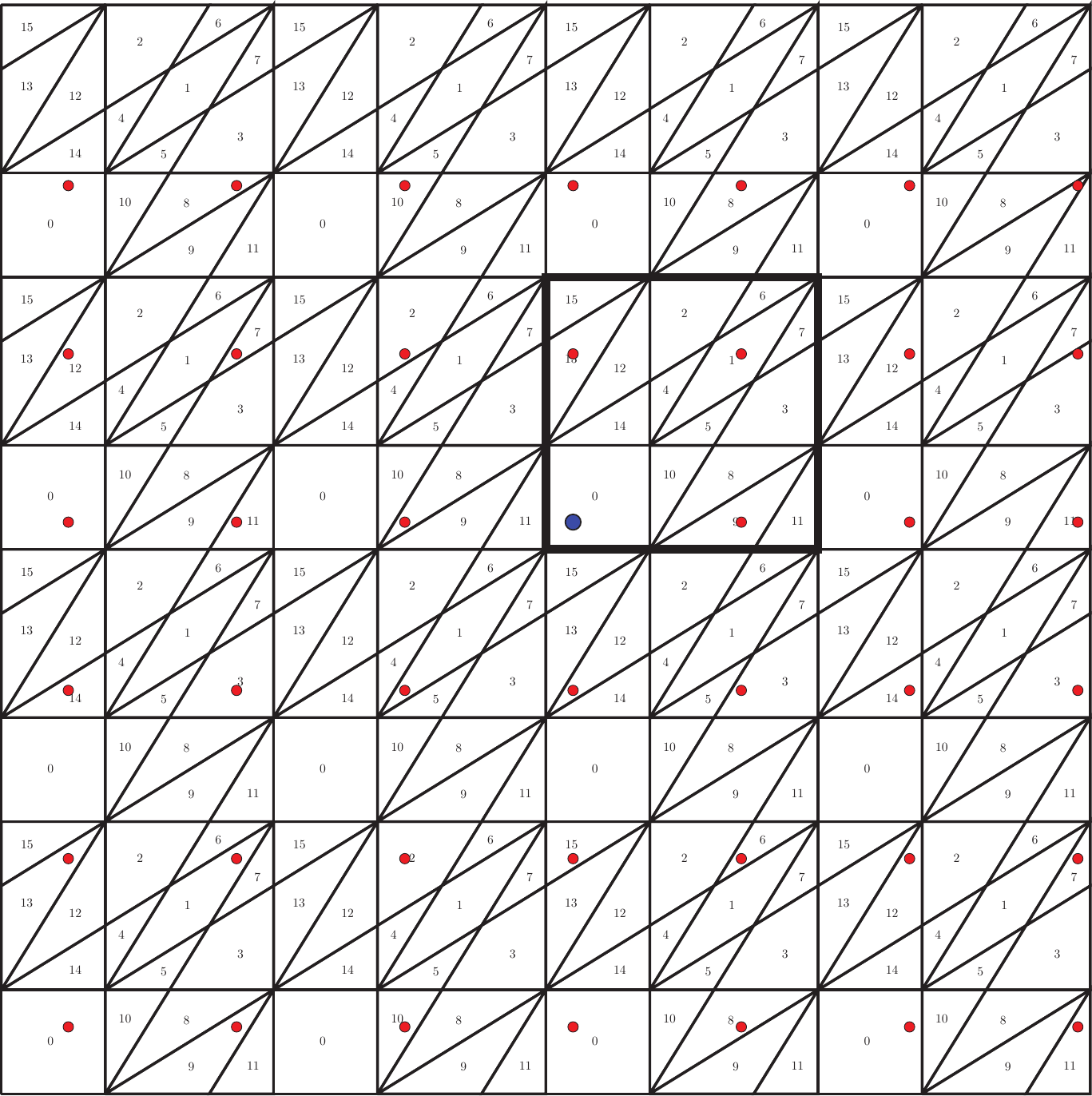}\caption{The orbit of the point $p = (0.1, 0.1)$ under the action $R_{16}$ in the torus $\mathbf{T} = [0,\varphi]\times [0,\varphi]$ where $\mathbf{T}$ has the partition $\mathcal{P}_{16}$. The point $p$ is in blue, and the rest of its orbit is in red.} \label{fig:Amm_16_orbit}
\end{center}
\end{figure}

\begin{figure}[h]
\begin{center}
\includegraphics[width=0.6\textwidth]{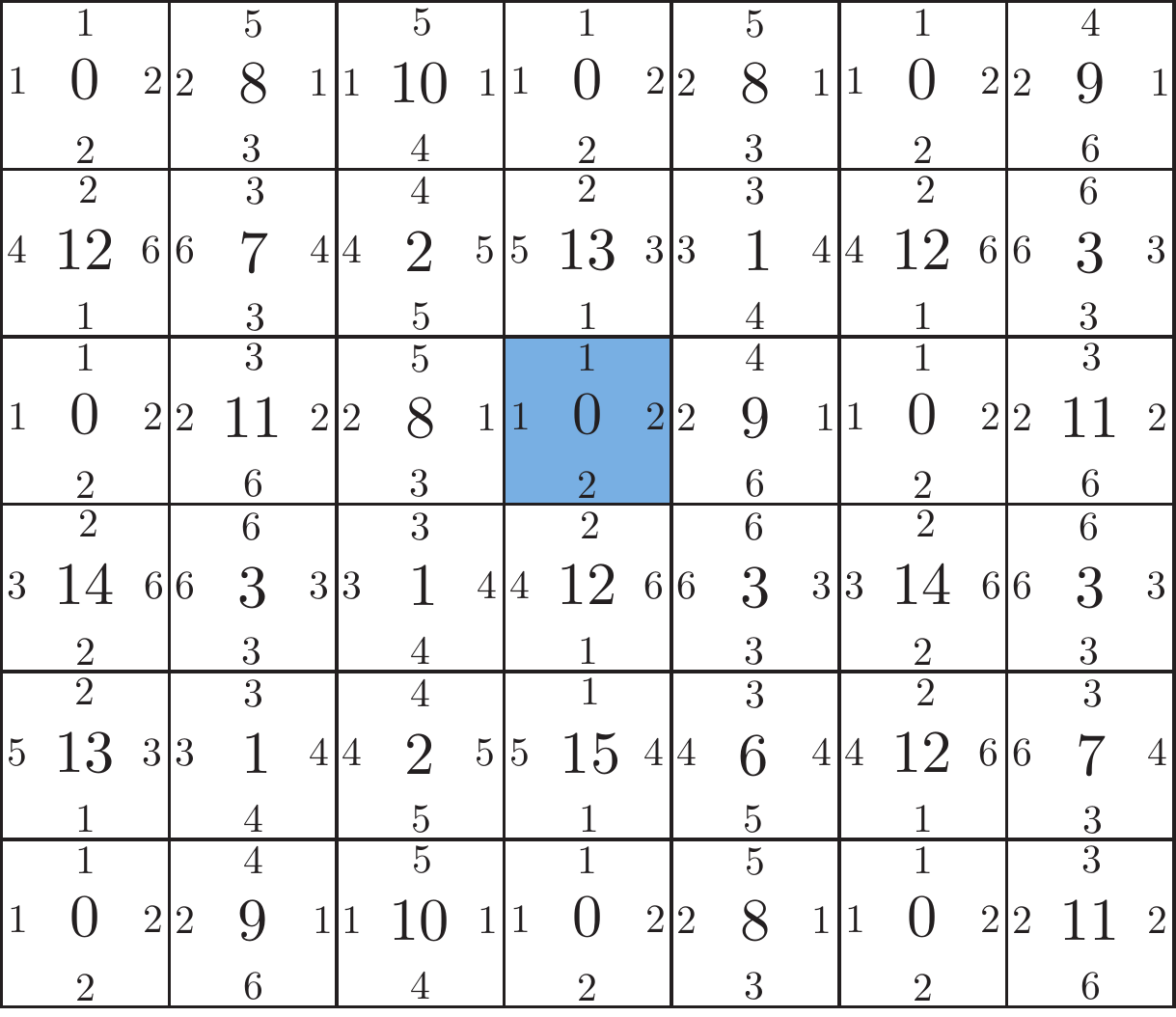}\caption{The $\mathcal{T}_{16}$ tiling corresponding to the orbit $\mathcal{O}_{R_{16}}((0.1,0.1))$ shown in Figure \ref{fig:Amm_16_orbit}. The blue tile corresponds to the point $(0.1,0.1)$.} \label{fig:Amm_16_tiling}
\end{center}
\end{figure}

\subsection{Orbits that intersect the boundary of $\mathcal{P}$}
Let $\Delta_{16}$ be the union of the boundary lines of the atoms in $\mathcal{P}_{16}$. In this subsection we consider an example where the orbit of a point in $\mathcal{T}$ intersects $\Delta_{16}$, and we note that the details are similar in $\mathcal{P}_{24}$. We will illustrate by example how the ambiguity that arises in assigning tiles to points in the orbit on $\Delta_{16}$ is resolved and we will see a certain interesting phenomenon in the dynamics of $\mathcal{X}_{\mathcal{P},R}$. 

Let $p = \mathbf{0} = (0,0) \in \mathbf{T}$ and let $q = (1/2,(3/2)\varphi - 1) \in \mathbf{T}$. Notice that $p$ lies at the intersection of lines of all 4 slopes of boundary lines in $\mathcal{P}$ since this torus $\mathbf{T}$ is formed as a quotient by identifying opposite sides. Also notice that $q$ lies on a boundary segment of slope $\varphi$ connecting $(0,\varphi-1)$ and $(\varphi -1, 1)$. Consider the orbit $\mathcal{O}_{R_{16}}(p)$. Using the computer, inside of a finite window of size $[-40,40]\times[-40,40]$, we can check which points $\mathbf{n} \in \Z^2$ satisfy $R_{16}^{\mathbf{n}}(\mathbf{0}) \pmod{\mathbf{T}} \in \Delta_{16}$, and a plot of these points $\mathbf{n}$ is shown in Figure \ref{fig:Amm_0_orbit}. Similarly, inside the same size window, we plot the points $\mathbf{n} \in \Z^2$ satisfying $R_{16}^{\mathbf{n}}(q) \pmod{\mathbf{T}} \in \Delta_{16}$ in Figure \ref{fig:Amm_line_1_orbit}. Interestingly, we find that the points $\mathbf{n}\in\Z^2$ where the orbit falls on the boundary forms straight strips of points, sometimes a single strip (when the orbit touches lines in the boundary of just one slope) or four separate strips (when the orbit touches lines in the boundary of different slopes). The directions of the strips are called the \emph{nonexpansive directions} of $\mathcal{X}_{\mathcal{P}_{16},R_{16}}$. In \cite{Mann2022}, the authors provide methodology for determining the exact directions of such nonexpansive directions, and we leave that topic for the reader to explore further. We only mention here that these nonexpansive directions correspond to what are commonly called \emph{Conway worms} in Penrose tilings \cite{Gardner1}; nonexpansive directions provides a very nice and more general explanation for such behavior in some tilings. 

In either situation, the ambiguity in the corresponding tiling can be resolved by choosing a direction to determine what tile to place at the points $\mathbf{n}$ such that $R^{\mathbf{n}}_{16}(p)$ lies on $\Delta_{16}$. For example, if we again consider the orbit $\mathcal{O}_{R_{16}}(q)$ where $q$ is as in Figure \ref{fig:Amm_line_1_orbit}, let us specify a direction vector $\mathbf{v} = (-1,1)$. Notice that no line segment in $\Delta_{16}$ is parallel to $\mathbf{v}$. Now, at each point of $\mathbf{n}$ where $\mathcal{O}_{R_{16}}(q)$ intersects $\Delta_{16}$, place vector $\mathbf{v}$ with its initial point at $\mathbf{v}$. Because $\mathbf{v}$ is not parallel to any of the segments comprising $\Delta_{16}$, then $\mathbf{v}$ is pointing from $\mathbf{n}$ into one of the atoms $P_i \in \mathcal{P}_{16}$. Thus, we can place a copy of prototile $T_i$ at $\mathbf{n}$. The other choice is use $-\mathbf{v}$ to determine the choice of tile at each such point $\mathbf{n}$, which points toward the region on the opposite side of the boundary segment as does $\mathbf{v}$. This is illustrated in Figures \ref{fig:Amm_line_1_tiling_points} and \ref{fig:Amm_res}.

\begin{figure}[h]
\centering
\begin{subfigure}[b]{.45\textwidth} 
\centering
\includegraphics[width=.95\textwidth]{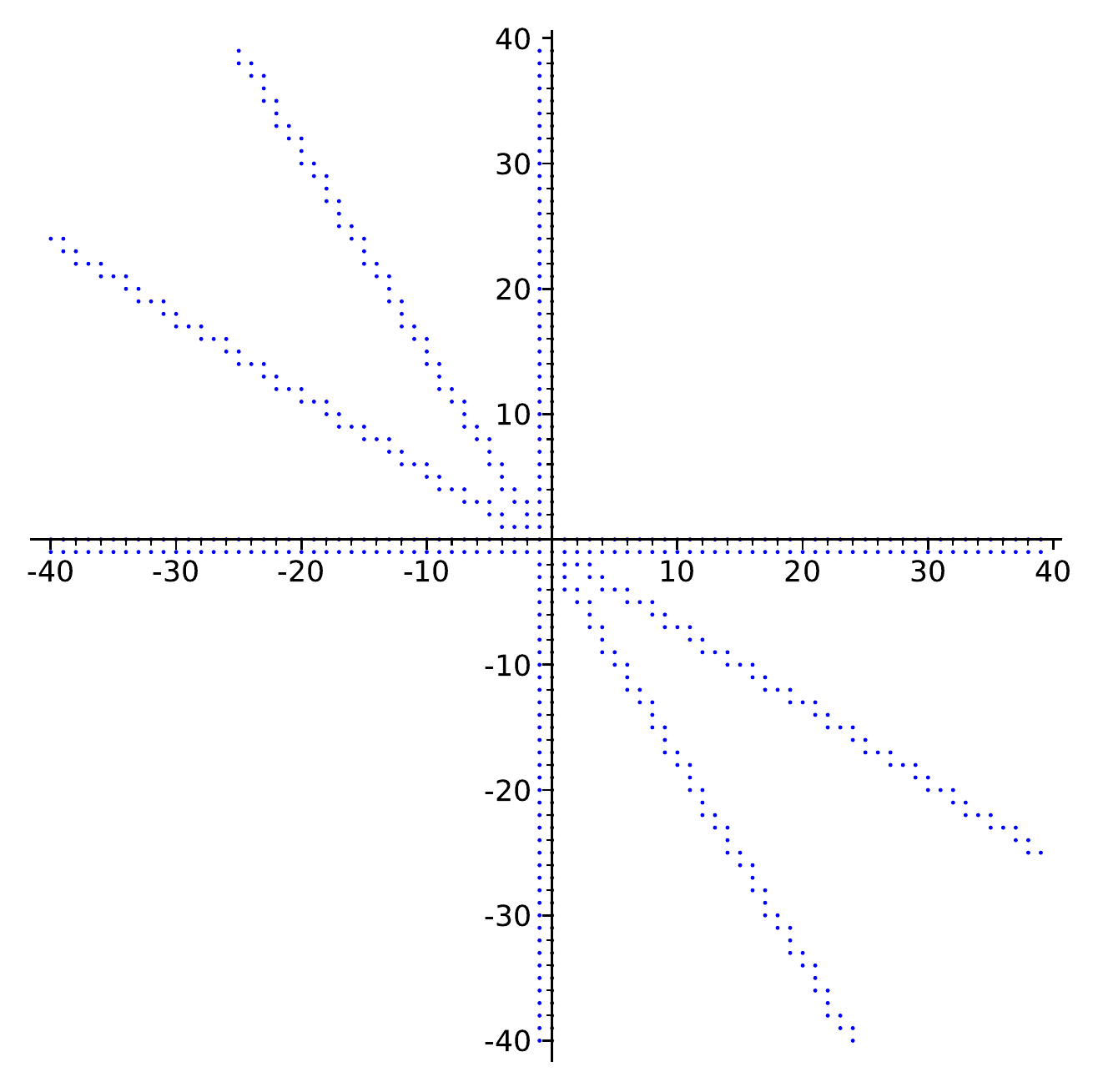}  
\caption{Points $\mathbf{n}$ such that $R^{\mathbf{n}}_{16}(\mathbf{0}) \in \Delta_{16}$}
\label{fig:Amm_0_orbit}
\end{subfigure}
\begin{subfigure}[b]{.45\textwidth} 
\centering
\includegraphics[width=.95\textwidth]{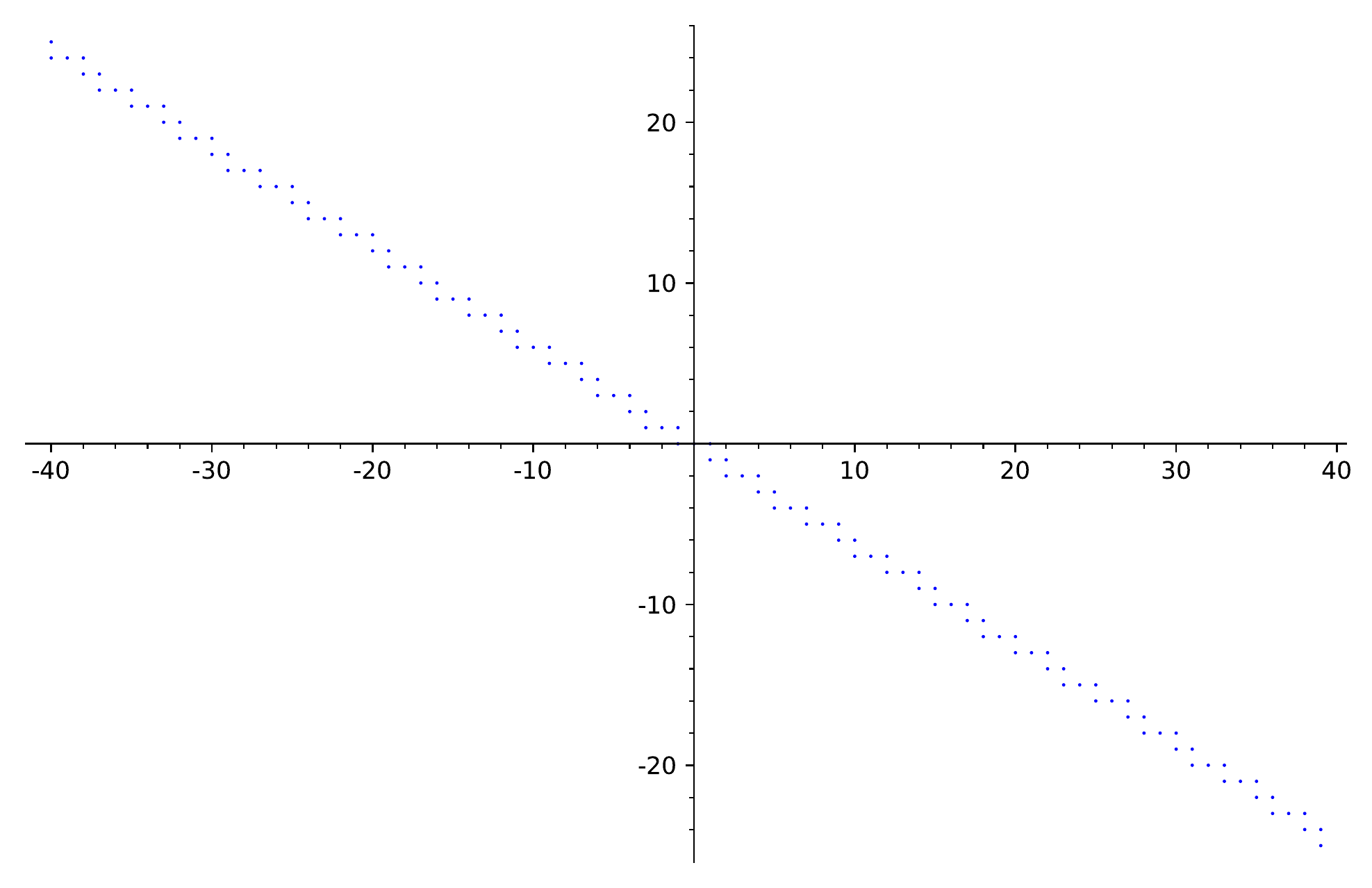}  
\caption{Points $\mathbf{n}$ such that $R^{\mathbf{n}}_{16}(\mathbf{q}) \in \Delta_{16}$, where $q = (1/2,(3/2)\varphi - 1)$.}
\label{fig:Amm_line_1_orbit}
\end{subfigure}
\caption{When orbits of points that intersect $\Delta_{16}$}\label{fig:Amm_orbit_boundary}\end{figure}

\begin{figure}[h]
\begin{center}
\includegraphics[width=0.6\textwidth]{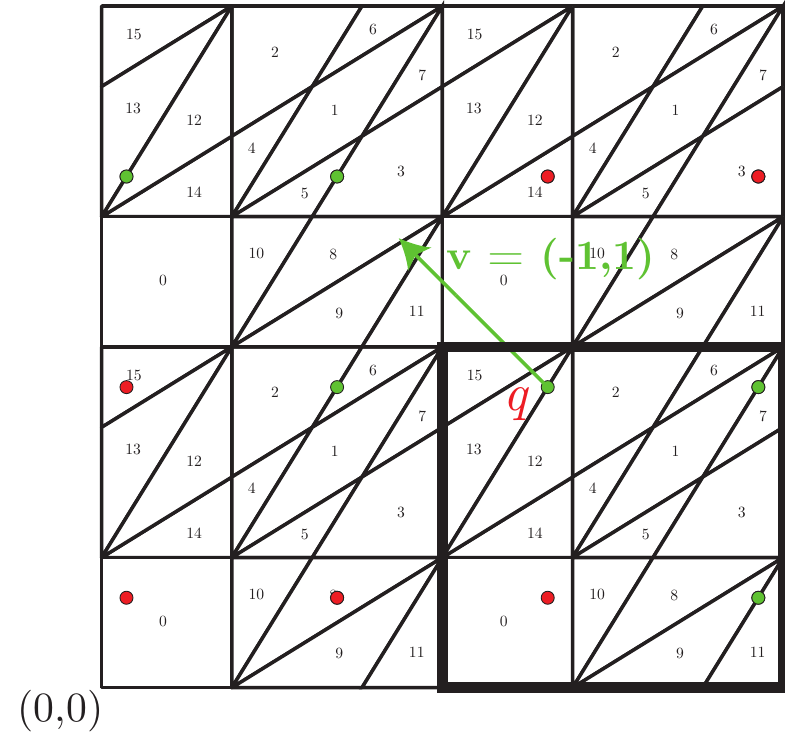}\caption{Using $\mathbf{v}=(-1,1)$ (in green) to resolve the tiles corresponding to points on $\Delta_{16}$. At $q = (1/2,(3/2)\varphi - 1)$, $\mathbf{v}$ points to region $13$ and $-\mathbf{v}$ points to region 12. The point $q$ corresponds to the tiles shaded red in Figure \ref{fig:Amm_res}.} \label{fig:Amm_line_1_tiling_points}
\end{center}
\end{figure}

\begin{figure}[h]
\centering
\begin{subfigure}[b]{.45\textwidth} 
\centering
\includegraphics[width=.95\textwidth]{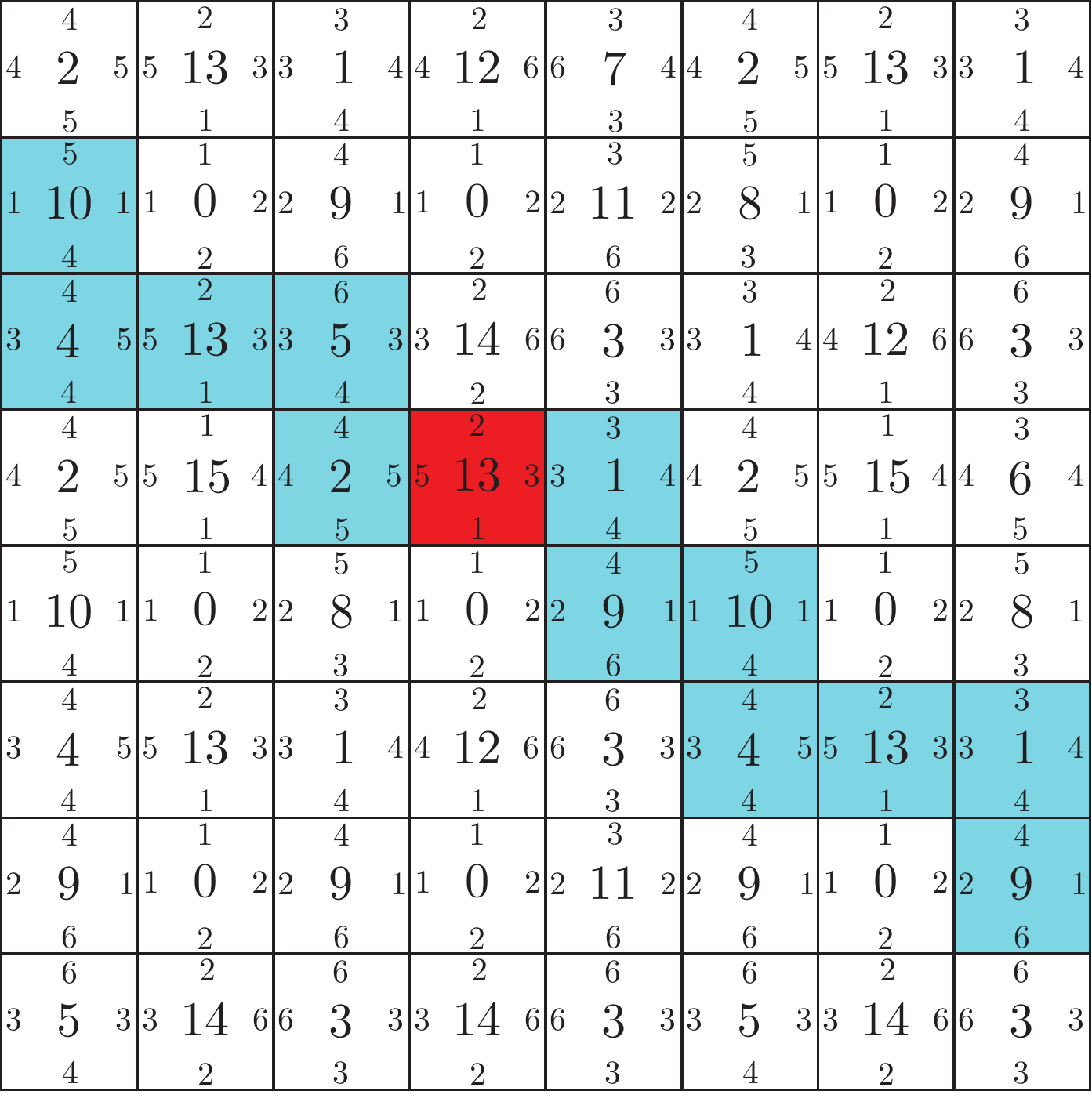}  
\caption{Using $\mathbf{v}=(-1,1)$ to resolve the tiles corresponding to points on $\Delta_{16}$.}
\label{fig:Amm_16_res_1}
\end{subfigure}
\begin{subfigure}[b]{.45\textwidth} 
\centering
\includegraphics[width=.95\textwidth]{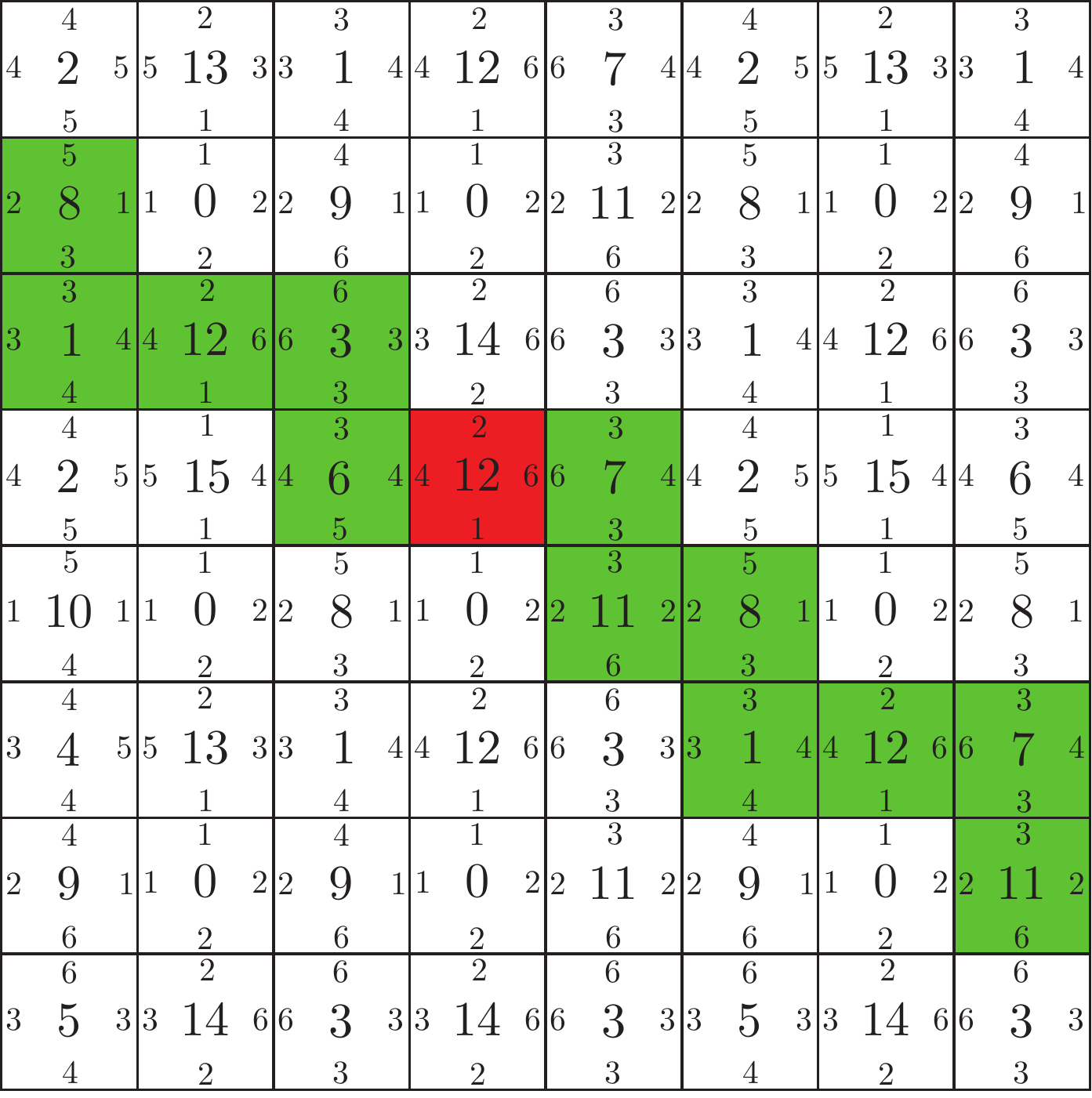}  
\caption{Using $-\mathbf{v}=(1,-1)$ to resolve the tiles corresponding to points on $\Delta_{16}$.}
\label{fig:Amm_16_res_2}
\end{subfigure}
\caption{Two tilings that are the same except at the colored tiles, which correspond to points $\mathbf{n}$ where the orbit of a point intersects $\Delta_{16}$. The tiles in red correspond to the point $q = (1/2,(3/2)\varphi - 1)$ in Figure \ref{fig:Amm_line_1_tiling_points}.}\label{fig:Amm_res}\end{figure}

\section{Candidate Jeandel-Rao Aperiodic Protosets}
In \cite{JR1}, the authors provide 33 candidate order-11 aperiodic protosets; the computer algorithm that they used to find the aperiodic set $\mathcal{T}_0$ could not rule out these 33 candidate sets as ones that admit periodic tilings, so it is likely that they are aperiodic protosets. In any event, as a matter of investigation, we will consider one of these candidate sets as an example. This set, depicted in Figure \ref{fig:JR2}, we will call $\mathcal{T}_2$.

\begin{figure}[h]
\begin{center}
\includegraphics[width=0.9\textwidth]{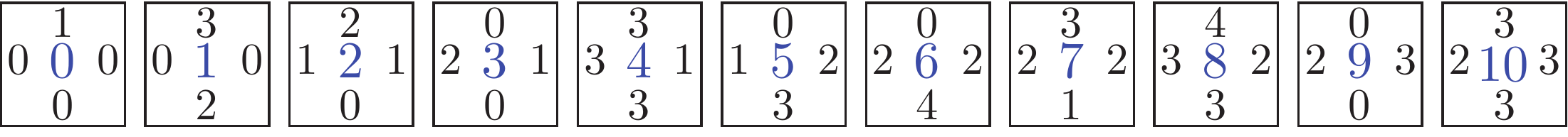}\caption{Jeandel-Rao candidate aperiodic protoset $\mathcal{T}_2$.} \label{fig:JR2}
\end{center}
\end{figure}

We find that the lattice $\Gamma_0$ associated with the original Jeandel-Rao aperiodic protoset does not resolve a coherent dot pattern. However, a variation of $\Gamma_0$ does: Define $\Gamma_2$ to be the lattice in $\R^2$ generated by the vectors $(p,0)$ and $(2-p,p+3)$. We see that $\Gamma_2$ does succeed in resolving a clear dot pattern, shown in Figure \ref{fig:JR2_dots}. Noticing that this dot pattern is very similar to the partition $\mathcal{P}_0$, it is not hard to arrive at a precise partition $\mathcal{P}_2$ for $\mathcal{T}_2$, which is shown in Figure \ref{fig:JR2_partition}. Finally, it appears, at least experimentally, that using the $\Z^2$ action $R_{2}^{\mathbf{n}}(p) = p + \mathbf{n} \pmod{\Gamma_2}$ defined on the torus $\mathbf{T}_2 = [0,\varphi]\times[0,\varphi+3]$ that is endowed with the partition $\mathcal{P}_2$, produces valid tilings (see figure \ref{fig:JR2_tiling}). 
Perhaps an analysis of the dyanamical systems properties of $\mathcal{X}_{\mathcal{P}_2,R_2}$ can be leveraged to argue that the full Wang shift $\Omega_2$ is aperiodic or has other interesting properties.

\begin{figure}[h]
\begin{center}
\includegraphics[width=0.6\textwidth]{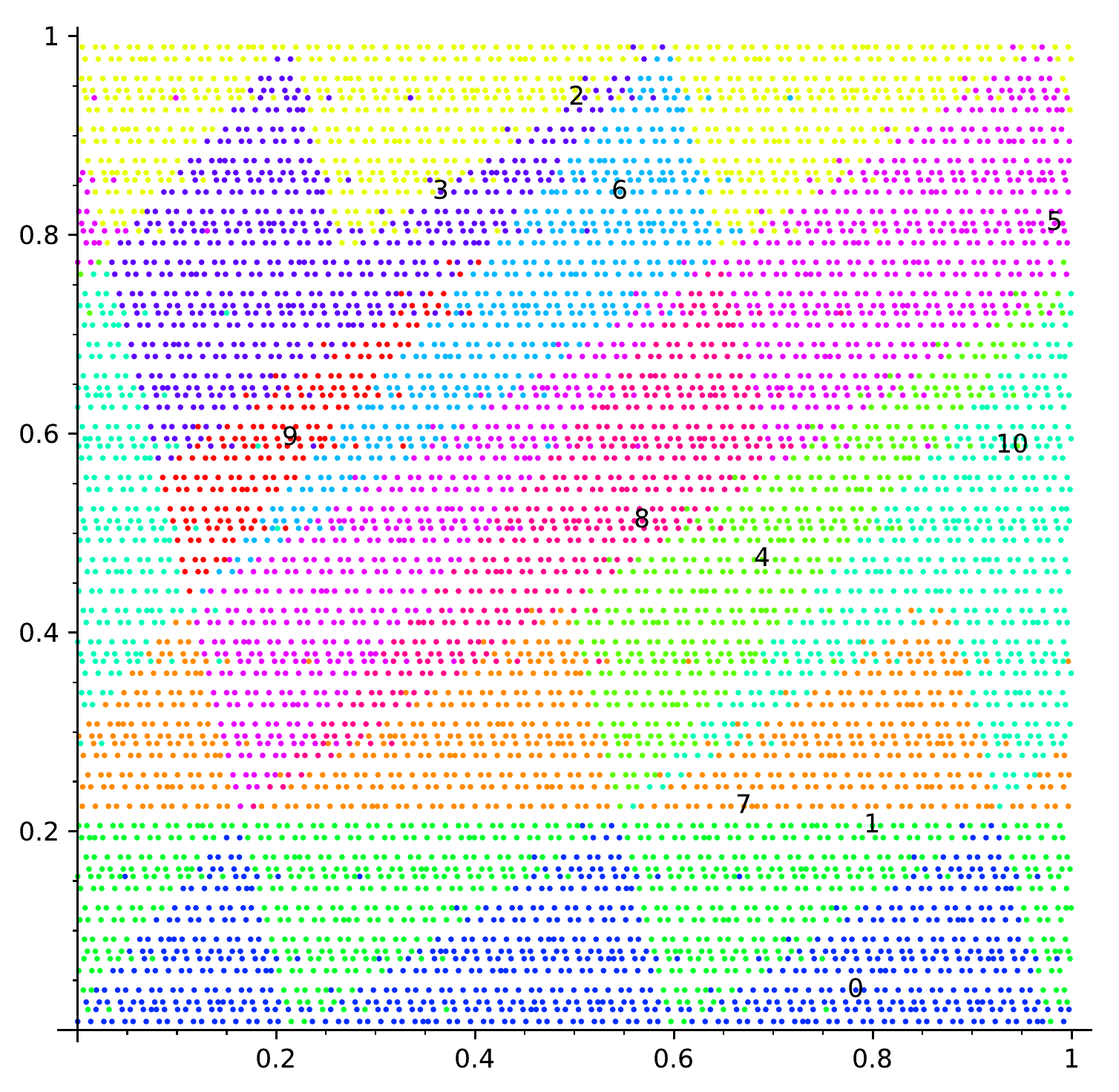}\caption{A dot pattern found by applying matrix $\left( \begin{matrix}
    \varphi & 2-\varphi\\
    0 & \varphi+3
\end{matrix}\right)^{-1}$ to a large patch of tiling by $\mathcal{T}_2$.} \label{fig:JR2_dots}
\end{center}
\end{figure}

\begin{figure}[h]
\begin{center}
\includegraphics[width=0.3\textwidth]{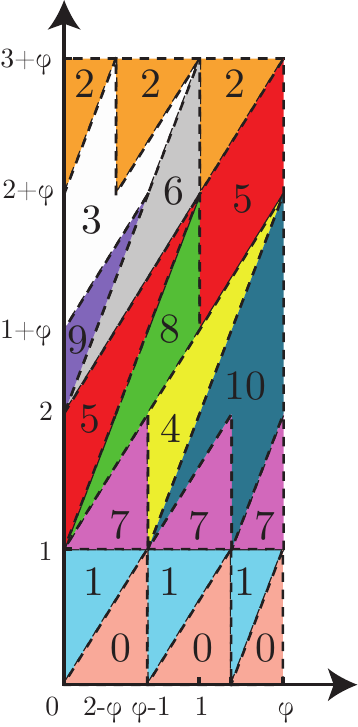}\caption{A partition for $\mathcal{T}_2$.} \label{fig:JR2_partition}
\end{center}
\end{figure}

\begin{figure}[h]
\begin{center}
z\includegraphics[width=0.95\textwidth]{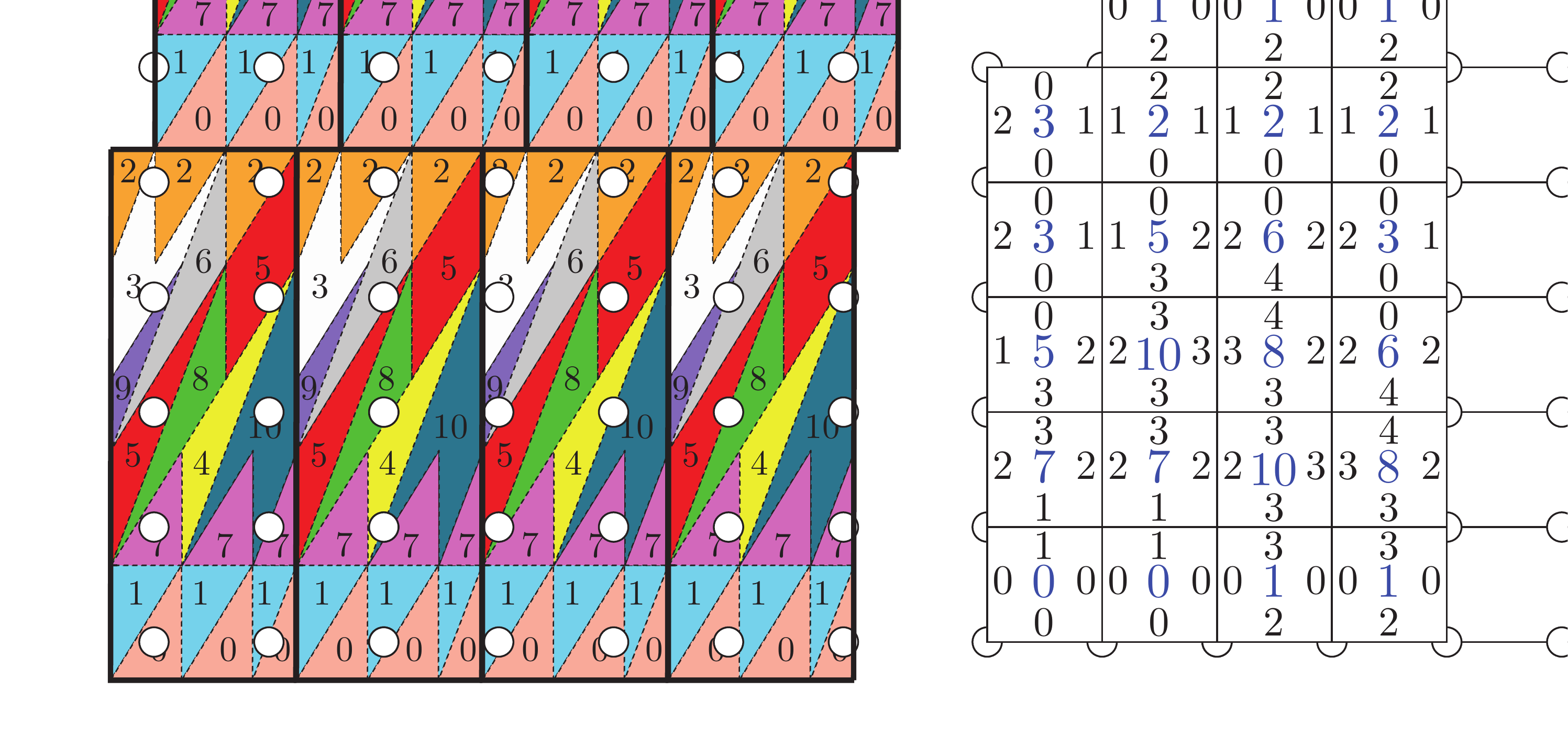}\caption{Orbits of points in $\mathbf{T}_2$ paritioned with $\mathcal{P}_2$ under the action $R_2$ seem to produce valid tilings. The reader is encouraged to produce the next two columns of the tiling at right!} \label{fig:JR2_tiling}
\end{center}
\end{figure}

\section{Identifying the Signature of the Underlying Lattice}
Here we discuss some methodology for determining the lattices for Wang shifts such as $\Omega_0$, $\Omega_{16}$, $\Omega_{24}$ and $\Omega_2$. Let $\mathcal{T}$ be some protoset of Wang tiles, and suppose that there is a hidden encoding of tilings in $\Omega_{\mathcal{T}}$ via a $\Z^2$ action on a partitioned torus, as with the Wang shifts discussed above. Then there is a lattice $\Gamma$ generated by vectors $\gamma_1$ and $\gamma_2$ giving rise to a torus $\mathbf{T} = \R^2 / \Gamma$, along with a $\Z^2$ action $R$ on $\mathbf{T}$ defined by $R^{\mathbf{n}}(x) = x + \mathbf{n} \pmod{\Gamma}$, and there is a topological partition $\mathcal{P} = \{P_0,P_1,\ldots,P_{n-1}\}$ of $\mathcal{T}$ used to encode the orbits of points in $\mathcal{T}$ under $R$ as tilings.

When we look at a tiling in $\Omega_{\mathcal{T}}$, we may notice the repeating patterns within the tiling that suggest ``golden" rotations are responsible for the patterns. Such rotations have some signature behavior, such as having ``near periods" of length approximately equal to Fibonnaci numbers and encoding infinite patterns that are in correspondence with the Fibonacci word. An example of such a golden rotation is a 1-dimensional $\Z$ action (rotation) $R_{\varphi}$ of the unit interval $[0,\varphi+1]$ defined by $R_{\varphi}^{\mathbf{n}}(x) = x + n \pmod{\varphi+1}$. We partition $[0,\varphi+1]$ into two subintervals $P_0 = (0,\varphi)$ and $P_1 = (\varphi,\varphi+1)$. We can use the action $R_{\varphi}$ and the partition $\mathcal{P}=\{P_0,P_1\}$ to encode points in $[0,\varphi]$ as bi-infinite sequences as follows: For each $x \in [0,\varphi+1]$ and $n \in \Z$, if $R^{n}(x) \in P_0 \cup \{0\}$, we place 0 at the $n$-th decimal place of a binary decimal expansion, and if $R^{n}(x) \in P_1 \cup \{\varphi\}$, we place 1 at the $n$-th decimal place. For example, if we encode $0 \in [0,\varphi]$ according to this scheme, we get the bi-infinite binary decimal expansion \[\ldots 01001010010010100101.001001010010010100101\ldots,\] which is the well-known Fibonacci word. The presense of repeating patterns in a tiling along a line that follow the pattern of a Fibonacci word suggests that the toral rotation in the direction of that line is something akin to $R_{\varphi}$. 

Another signature is the spacing of repeating structures. Going back to $R_{\varphi}$, consider the orbit $\mathcal{O}_{R_{\varphi}}(x)$ of some point $x \in [0,\varphi+1]$. It is not hard to check that $\mathcal{O}_{R_{\varphi}}(x)$ is dense in $[0,\varphi+1]$. Thus, there are many values of $n \in \Z$ such that $R^{n}(x) \approx x$. For what values of $n$ is $R^{n}(x) \approx x$? It turns out that when $|n|$ is a Fibonacci number, we see that $R^{n}(x) \approx x$. This is a result of a property of the Fibonacci sequence $(F_n) = (1,1,2,3,5,8,\ldots)$ which has the well-known property that $F_{n+1}/F_{n} \rightarrow \varphi$ as $n \rightarrow \infty$. From this limit, we see that a Fibonacci number (which is integer valued) is approximately a multiple of $\varphi$. Thus, we see that $R^{n}(x) = x + n \pmod{\varphi+1} \approx x$ whenever $|n|$ is Fibonacci. The upshot here is that we expect to see repeating patterns within the encoding of $x$ with spacing between these patterns being Fibonacci numbers.

Slightly generalizing the rotation $R_{\varphi}$, we may consider rotations $R_{(a,b)}$ of the interval $[0,a\varphi + b]$ ($a,b\in \Z$) defined by $R_{(a,b)}(x) = x + n \pmod{a\varphi + b}$. Whatever partition we place on $[0,a\varphi+b]$, in the encoding of $x \in [0,a\varphi + b]$ under $R_{(a,b)}$, we expect to see repeating patterns with spacing between these patterns being Fibonacci numbers because $R_{(a,b)}^{n}(x) = x + n \pmod{a\varphi + b} \approx x$ when $n \in \Z$ is approximately a multiple of $a\varphi + b$, which again happens when $n$ is a multiple of $a \varphi$, which happens when $n$ is a Fibonacci number. Thus, seeing a repeating pattern with gaps between being the size of Fibonacci numbers is suggestive of a rotation of the form $R_{(a,b)}$. 
  
Now let us consider 2-dimensional $\Z^2$ toral rotations of a certain form: Suppose $R$ is defined on a torus $\mathbf{T}$ by $R^{\mathbf{n}}(x) = x + \mathbf{n} \pmod{\Gamma}$ where $x \in \mathbf{T}$, $\mathbf{n} \in \Z^2$, and $\gamma_1 = (a\varphi+b,c\varphi+d)$ and $\gamma_2 = (e\varphi + f,g\varphi+h)$ ($a,b,c,d,e,f,g,h \in \Z$) are the generating vectors of the lattice $\Gamma$. If $\gamma_1$ is horizontal, then when $R^{\mathbf{n}}$ is restricted to those values of $\mathbf{n} = (n_1,n_2)$ where $n_2$ is fixed, $R^{\mathbf{n}}$ will behave like a golden rotation in the horizontal direction. Thus, to pick up the signature of horizontal golden rotation in a tiling, we look for repeating horizontal patterns in the tiling that have spacing between these patterns being Fibonacci numbers. A similar thing could happen in the vertical direction or any slant direction; in a slant direction, we would look for a repeating pattern with spacing that is a Fibonacci number multiple of a vector parallel to that direction.

While spotting the tell-tale behavior of a golden rotation in a direction inside a tiling is not too difficult to do, finding this direction is of the form $(a\varphi+b, c\varphi + d)$ and finding the specific values of $a,b,c,$ and $d$ are two different questions. We have not yet discovered a way to pinpoint these values from properties of the tiling, but it is not too hard to have the computer test values of $a,b,c,$ and $d$ over a range of integers, say where each variable ranges over integers between -3 and 3, and see if any of those values produce dot patters indicating clear partitions. And this is exactly what one can do to find dot patterns for the partitions $\mathcal{P}_{0}$, $\mathcal{P}_{2}$, $\mathcal{P}_{16}$, and $\mathcal{P}_{24}$. 

As an example, consider the computer generated partial tiling admitted by $\mathcal{T}_{16}$ (the Wang tile version of the Ammann A2 protoset) shown in Figure \ref{fig:Amm_16_marked_tiling}. Tile 15 has been colored to make it stand out in contrast to the other tiles of the tiling. One notices the signatures of golden rotations within this tiling in both the horizontal and vertical directions - See Figure \ref{fig:Amm_16_marked_tiling}. This is suggestive of golden toral rotations in vertical and horizontal directions of the form $(a\varphi+b, 0)$ and $(0,c\varphi + d)$, and luckily, the simplest vectors $\gamma_1 = (\varphi,0)$ and $\gamma_2 = (0,\varphi)$ produce a nicely resolved dot pattern from which we determined the parittion $\mathcal{P}_{16}$ for $\mathcal{T}_{16}$.

\begin{figure}[h]
\begin{center}
\includegraphics[width=0.8\textwidth]{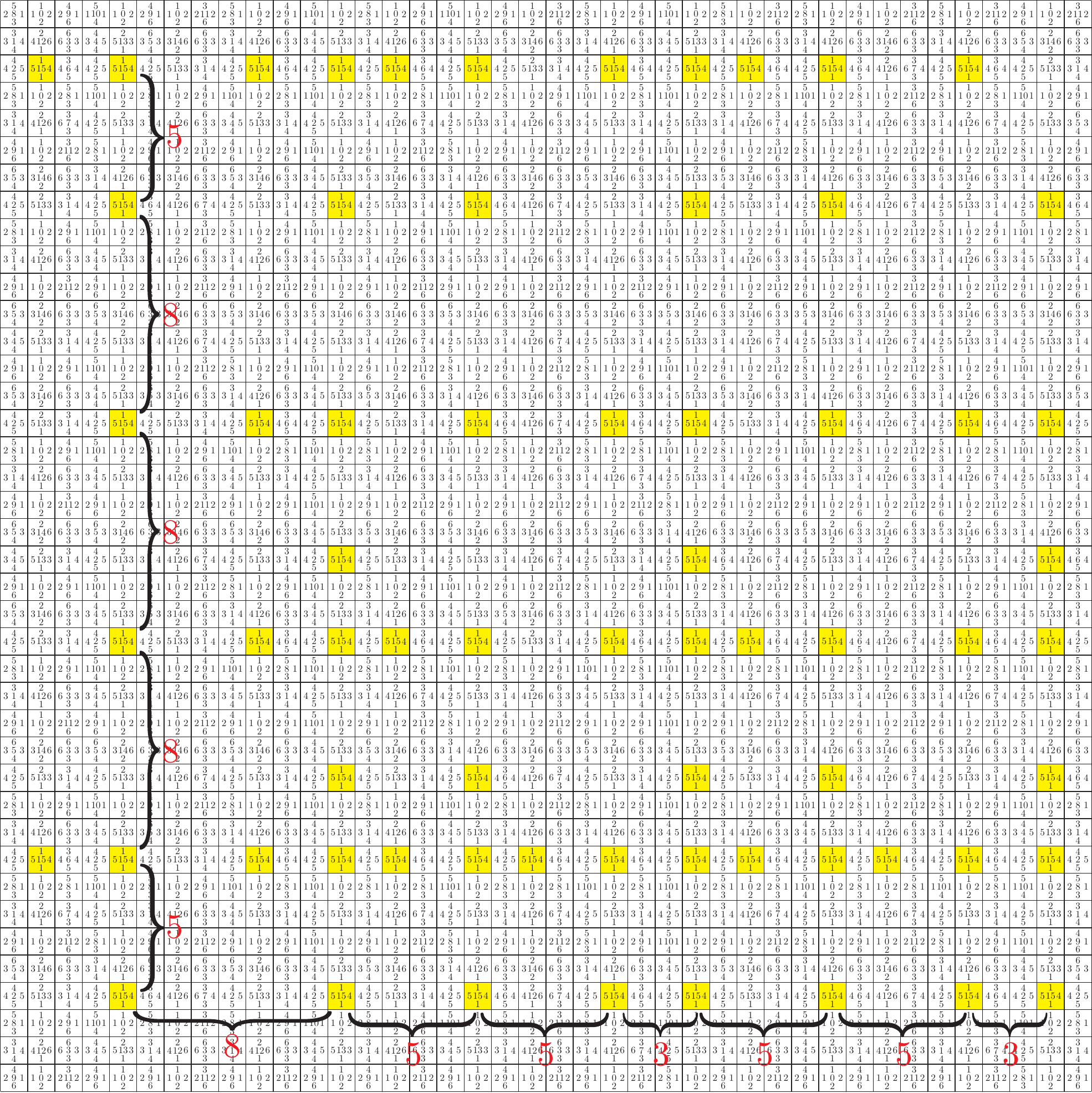}\caption{A partial tiling by $\mathcal{T}_{16}$ exhibiting evidence of golden toral rotations. Tile $T_{15}$ is colored yellow.} \label{fig:Amm_16_marked_tiling}
\end{center}
\end{figure}

Other tilings exhibiting golden rotations do not give up their secret lattices quite as easily. For example, in Figure \ref{fig:JR2_marked_tiling}, we give a portion of a tiling admitted by the Jeandel-Rao candidate aperiodic protoset $\mathcal{T}_2$. Using the computer to find clearly resolved dot patterns turned up a few essentially equivalent possibilities, one of which is depicted in Figure \ref{fig:JR2_dots}. The golden rotational behavior (Fibonacci spacing) corresponding to the two vectors generating that dot pattern is illustrated in Figure \ref{fig:JR2_marked_tiling}.

\begin{figure}[h]
\begin{center}
\includegraphics[width=0.8\textwidth]{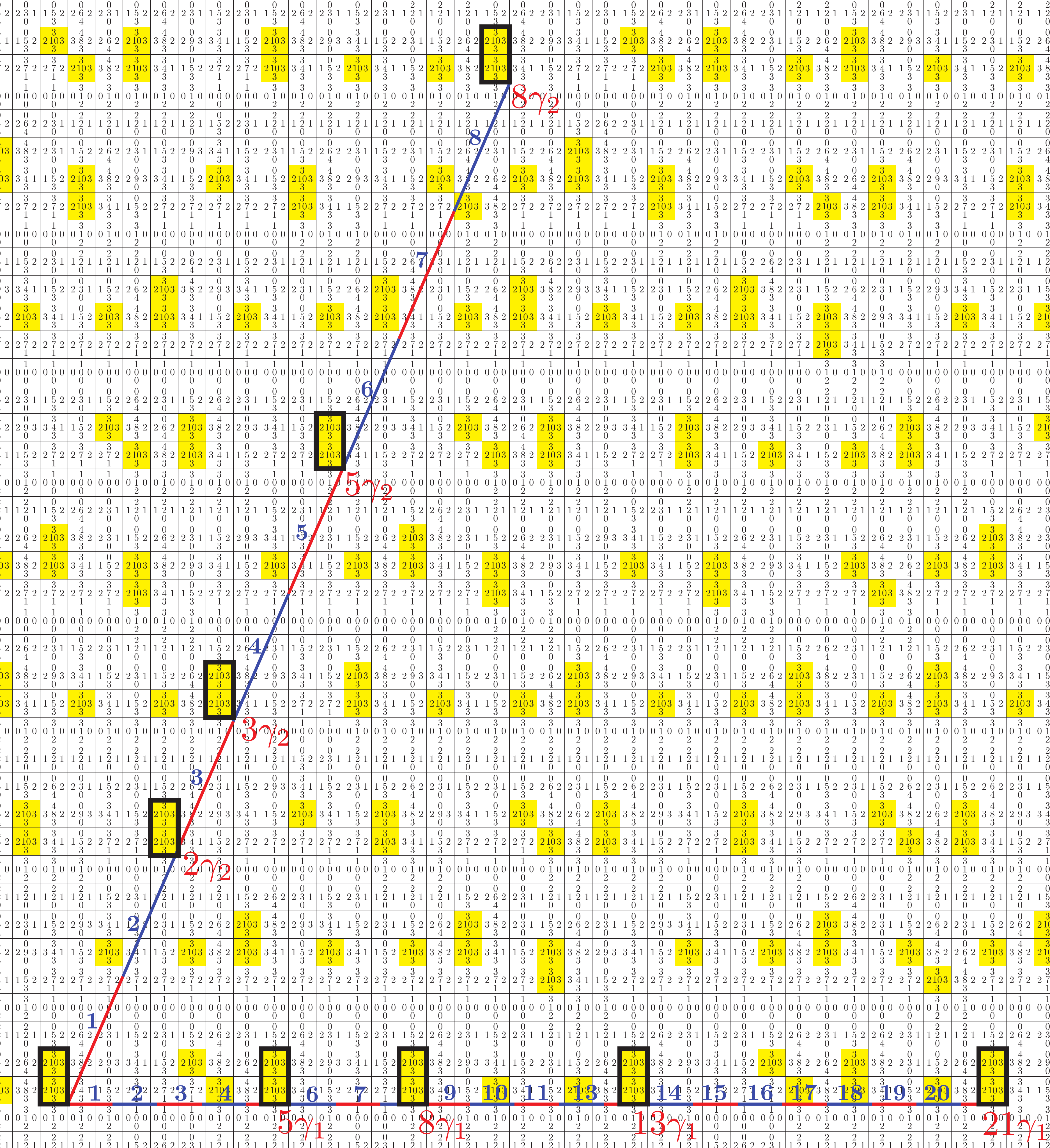}\caption{A partial tiling by $\mathcal{T}_{2}$ exhibiting evidence of golden toral rotations. $\gamma_1$ and $\gamma_2$ are vectors generating a lattice that resolves the tiling to a clear dot pattern as seen in Figure \ref{fig:A2_dots}. Tile $T_{10}$ is colored yellow.} \label{fig:JR2_marked_tiling}d
\end{center}
\end{figure}

We conclude this section by giving a snippet of code that show how we found the lattice vectors for $\mathcal{T}_2$ that produced a clearly resolved dot pattern, revealing the underlying partition.

\begin{lstlisting}
from slabbe import WangTileSet
import itertools

#Creating the Wang tiles
T2_tiles = [(0,1,0,0),(0,3,0,2),(1,2,1,0),(1,0,2,0),(1,3,3,3),(2,0,1,3),(2,0,2,4),(2,3,2,1),(2,4,3,3),(3,0,2,0),(3,3,2,3)]
T2 = WangTileSet(T2_tiles)

#Generating a large patch of tiling with bottom row all copies of tile 6
 preassigned_tiles = {(a,0):6 for a in range(100)};
S2 = T2.solver(100,62,preassigned_tiles=preassigned_tiles);
T2_tiling = S2.solve(solver='glucose');

#Coloring all copies of tile 10 yellow in the tiling
positions = T2_tiling.tile_positions([10])
extra = []
    for (x,y) in positions:
        extra.append(r'\draw[fill=yellow]({},{})rectangle({},{});'.format(x,y,x+1,y+1))
extra = '\n'.join(extra)

#Displaying the tiling with and without the colored copies of tile 10
T2s.tikz()
T2_tiling.tikz(extra_before=extra)

#Testing a range of lattice generators (columns of M)
#to see if they resolve a clear dot pattern for a partition
for P in itertools.product([-1,0,1],[0,1,2],[0,1,2],[0,1,2],[0,1,2,3]):
M = matrix.column([[p+P[0],0],[P[1]*p+P[2],P[3]*p+P[4]]]);
if M.determinant() != 0:
    print(P)
    T2_tiling.plot_points_on_torus(M.inverse()).show()
\end{lstlisting}

\section*{Acknowledgements}
The authors would like to thank S\'{e}bastien Labb\'{e} for his helpful suggestions.

\begin{appendices}
\section{Dynamical Systems and Wang Tilings}\label{apx:dyn_sys}

Here we will dive a little deeper into the Labb\'{e}'s theory, given more definitions and citing several theorems, then applying it to prove some facts about $\mathcal{X}_{\mathcal{P}_{16},R_{16}}$. First, we provide some additional details and definitions, following the ideas laid out in \cite{Labb2021}.

Recall from Subsection \ref{subsec:dyn_sys_intro} that a topological dynamical system is a triple $(X,G,S)$ where $X$ is a topological space, $G$ is a topological group, and $S$ is a continuous function $S \!:\! G \times X \rightarrow X$ which defines a left action of $G$ on $X$. For $A \subseteq X$, we will use $\overline {A}$ to indicate the \emph{\textbf{topological closure}} of $A$ in $X$. We also define another kind of closure - the \emph{\textbf{$S$-closure}} - by $S(A) = \cup_{g \in G}S^{g}(A)$, and we will say that $A$ is \emph{\textbf{$S$-invariant}} if $S(A) = A$. We say the system $(X,G,S)$ is \emph{\textbf{minimal}} if $X$ contains no proper, nonempty, (topologically) closed, $S$-invariant subsets.

Recall from Subsection \ref{subsec:dyn_sys_intro} that the configuration space, $\mathcal{T}^{\Z^2}$, where $\mathcal{T}$ is a Wang tile protoset, is thought of as the space of all possible tilings (valid and nonvalid) of the plane by $\mathcal{T}$. The configuration space $\mathcal{T}^{\Z^2}$ is a topological space when endowed with the compact product topology, and moreover, this topology is generated by the metric $d$ on $\mathcal{T}^{\Z^2}$ defined by $d(x,x') = 1/2^{\min\{\|\mathbf{n}\| \in \Z^2:x_{\mathbf{n}} \neq x_{\mathbf{n}}\}}$. In the formula for this metric, $\min\{\|\mathbf{n}\| \in \Z^2:x_{\mathbf{n}} \neq x_{\mathbf{n}}\}$ is a value of $\mathbf{n}$ closest to the origin where $x$ and $x'$ do not agree, so $d(x,x')$ is close to 0 when $x$ and $x'$ agree on a large patch centered at the origin, and $d(x,x')$ is close to 1 when $x$ and $x'$ disagree near the origin. $(\mathcal{T}^{\Z^2},\Z^2, \sigma)$, where $\sigma$ is the shift action, is a topological dynamical system, and in this particular setting, if $X \subseteq \mathcal{T}^{\Z^2}$ is $\sigma$-invariant, we say $X$ is \emph{\textbf{shift-invariant}}. For a space of Wang tilings $X$ (say, a space of Wang tilings admitted by a protoset, or a space of tilings having some defining property), shift-invariance is clearly a desirable property, for if $x' = \sigma(x)$ where $x \in X$, that just means $x'$ is a translation of $x$, so $x$ and $x'$ are really the same tilings, and thus, we would want $X$ to contain both. If $X$ is topologically closed and shift invariant in $\mathcal{T}^{\Z^2}$, then we call $X$ a \emph{\textbf{subshift}} of $\mathcal{T}^{\Z^2}$. Notice that $(X,\Z^2,\sigma)$ is a topological dynamical system when $X$ is a subshift of $\mathcal{T}^{\Z^2}$.

\subsection{Shifts of Finite Type}\label{subsec:SFT}
Of particular importance in the context of tilings are \emph{shifts of finite type}. A subshift of Wang tilings $X$ being a shift of finite type essentially means, informally, that the tilings in $X$ are completely determined a set of forbidden arrangements of tiles, such as when two tiles cannot be placed adjacent to one another due to a mismatch in the edge matching rules. Formally, to define shift of finite type, for any finite subset $S \subseteq \mathcal{T}^{\Z^2}$, we define $\mathcal{T}^{S} = \{y\!:\! S \rightarrow \mathcal{T}$\}, and we call any element $p \in \mathcal{T}^S$ a \emph{\textbf{pattern}}. Next, we define the projection map $\pi_{S}\!:\!\mathcal{T}^{\Z^2} \rightarrow \mathcal{T}^{S}$ which restricts any configuration $x \in \mathcal{T}^{\Z^2}$ to $S$ (i.e. $\pi_{S}(x) = x|_{S}$). We say $X \subseteq \mathcal{T}^{\Z^2}$ is a \textbf{\emph{shift of finte type (SFT)}} if there exists a finite set of patterns $\mathcal{F}$ called \textbf{\emph{forbidden patterns}} such that \[X = \{x \in \mathcal{T}^{\Z^2} | \pi_S \circ \sigma^{\mathbf{n}}(x) \notin \mathcal{F} \text{ } \forall \mathbf{n} \in \Z^2, S \subset \Z^2\}.\] It is clear that the full Wang shift $\Omega_{\mathcal{T}}$ of a protoset of Wang tiles $\mathcal{T}$ is an SFT - the set of finite set forbidden patterns is formed simply as a set of representative invalid horizontal and vertical pairings of tiles from the protoset. However, for a subshift $X$ of $\Omega_{\mathcal{T}}$, such as $\mathcal{X}_{\mathcal{P},R}$ formed from a partition, it may take some work to argue that $X$ is an SFT, for it is possible that $X$ has forbidden patterns in addition to the invalid horizontal and vertical pairings of prototiles.

\subsection{Symbolic Representations and Markov Partitions}
Here will will give precise definition to the term \emph{Markov partition}. Suppose that $(M,\Z^2,R)$ is a topological dynamical system where $M$ is a compact topological space. Recall that a \emph{topological partition} of $M$ is a collection $\mathcal{P} = \{P_0, P_1, \ldots, P_{n-1}\}$ of disjoint open sets such that $M = \overline{P_0} \cup \overline{P_1} \cup \cdots \cup \overline{P_{n-1}}$. Let $\mathcal{A} = \{0,1,\ldots,r-1\}$. For any finite set $S \subset \Z^2$, let $\mathcal{A}^{S}
= \{x \!:\! S \rightarrow \mathcal{A}\}$. A \emph{pattern} $x \in \mathcal{A}^S$ is \emph{allowed} for $\mathcal{P},R$ if 
\[
   \bigcap_{\mathbf{k}\in S}R^{-\mathbf{k}}(P_{w_{\mathbf{k}}}) \neq \emptyset.
\] 

We denote the set of all allowed patterns for $\mathcal{P}$ and $R$ by  $\mathcal{L}_{\mathcal{P},R}$. Labb\'{e} points out in \cite{Labb2021} that $\mathcal{L}_{\mathcal{P},R}$ is the language of a subshift of $\mathcal{X}_{\mathcal{P},R} \subseteq \mathcal{A}^{\Z^2}$ and leads to the following definition.

\begin{definition} \label{dfn:symb_dyn_sys} Let \[
    \mathcal{X}_{\mathcal{P},R} = 
    \{x\in\Acal^{\Z^2} \mid \pi_S\circ\sigma^\bn(x)\in\Lcal_{\mathcal{P},R}
    \text{ for every } \bn\in\Z^2 \text{ and finite subset } S\subset\Z^2\}.\] We call $\mathcal{X}_{\mathcal{P},R}$ the \textbf{symbolic dynamical system} corresponding to $\mathcal{P},R$.
\end{definition} For $w \in \mathcal{X}_{\mathcal{P},R} \subset \mathcal{A}^{\Z^2}$ and $n \geq 0$,  define  
\[D_n(w) = \bigcap_{\|\mathbf{k}\| \leq n} R^{-\mathbf{k}}(P_{w_{\mathbf{k}}}) \subseteq M.\] We see that $\overline{D}_n(w)$ is compact and $\overline{D_n}(w) \supseteq \overline{D_{n+1}}(w)$ for each $n \geq 0$, and consequently, $\cap_{n = 0}^{\infty} \overline{D_n}(w) \neq \emptyset$. 

The hope is that configurations $x \in \mathcal{X}_{\mathcal{P},R}$ are generated uniquely as orbits of points in $M$, and the above intersection being a single point guarantees this:

\begin{definition} A topological partition $\mathcal{P}$ of $M$ gives a \textbf{\emph{symbolic representation}} of $(M, \Z^2,R)$ if for every $w \in \mathcal{X}_{\mathcal{P},R}$, the intersection $\cap_{n = 0}^{\infty}\overline{D_n}(w)$ consists of exactly one point $m \in M$. The configuration $w \in \mathcal{X}_{\mathcal{P},R}$ is called a \textbf{\emph{symbolic representation}} of $m \in M$.  \label{dfn:symbolic_rep}\end{definition}

Labb\'{e} provides the following alternative criterion for determining when a partition gives a symbolic representation. 

\begin{lemma}[\cite{Labb2021}]\label{lem:LabbeMinimality}Let $(M,\Z^2,R)$ be a minimal dynamical system and $\mathcal{P} = \{P_0,P_1,\ldots,P_{r-1}\}$ be a topological partition of $M$. If there exists an atom $P_i$ which is invariant only under the trivial translation in $M$, then $\mathcal{P}$ gives a symbolic representation of $(M,\Z^2,R)$.\end{lemma}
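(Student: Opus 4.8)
The plan is to show that for each $w \in \mathcal{X}_{\mathcal{P},R}$ the set $K_w := \bigcap_{n=0}^{\infty} \overline{D_n}(w)$, which the excerpt already guarantees is nonempty and compact, contains at most one point. Writing out membership, $m \in K_w$ precisely when $R^{\mathbf{k}}(m) \in \overline{P_{w_{\mathbf{k}}}}$ for every $\mathbf{k} \in \Z^2$; that is, the orbit of $m$ follows the itinerary prescribed by $w$. So I would suppose $m_1, m_2 \in K_w$ and set $t = m_2 - m_1$, using the translation structure of $M$ (in the setting at hand $M$ is a torus and $R$ acts by translations, so each $R^{\mathbf{k}}$ commutes with $x \mapsto x + t$ and $R^{\mathbf{k}}(m_2) = R^{\mathbf{k}}(m_1) + t$). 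The goal is then to force $t = 0$ from the hypothesis that $P_i$ has trivial translation-stabilizer.

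The first key step is a rigidity observation about the distinguished atom: whenever $R^{\mathbf{k}}(m_1)$ lands in the open set $P_i$, the symbol $w_{\mathbf{k}}$ must equal $i$. Indeed $R^{\mathbf{k}}(m_1) \in \overline{P_{w_{\mathbf{k}}}}$, and a point of the open atom $P_i$ cannot lie in $\overline{P_j}$ for $j \neq i$, since the atoms are pairwise disjoint open sets and hence a small neighborhood of such a point inside $P_i$ misses $P_j$ entirely. It follows that $R^{\mathbf{k}}(m_2) = R^{\mathbf{k}}(m_1) + t \in \overline{P_{w_{\mathbf{k}}}} = \overline{P_i}$. Next I would invoke minimality: every orbit is dense, so $\{R^{\mathbf{k}}(m_1) : \mathbf{k} \in \Z^2\}$ is dense in $M$ and therefore its intersection with the open set $P_i$ is dense in $P_i$. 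Applying the previous sentence along this dense set and passing to closures yields $\overline{P_i} + t \subseteq \overline{P_i}$. Running the identical argument with the roles of $m_1$ and $m_2$ interchanged gives $\overline{P_i} - t \subseteq \overline{P_i}$, and combining the two inclusions produces the exact invariance $\overline{P_i} + t = \overline{P_i}$.

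Finally I would convert this invariance of the closed set $\overline{P_i}$ into invariance of the atom $P_i$ and invoke the hypothesis. Since $x \mapsto x + t$ is a homeomorphism of $M$, it carries $\operatorname{int}(\overline{P_i})$ to itself; for atoms arising from Labbé's side-partition construction each $P_i$ is regular open, i.e. $P_i = \operatorname{int}(\overline{P_i})$, so $P_i + t = P_i$. By hypothesis the only translation fixing $P_i$ is trivial, whence $t = 0$ and $m_1 = m_2$. Thus $K_w$ is a singleton for every $w \in \mathcal{X}_{\mathcal{P},R}$, which is exactly the statement that $\mathcal{P}$ gives a symbolic representation of $(M, \Z^2, R)$.

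The step I expect to be the main obstacle is this last passage from $\overline{P_i} + t = \overline{P_i}$ back to $P_i + t = P_i$: the dense-orbit argument only controls closures, so equating invariance of the closed atom with invariance of the open atom requires knowing the atoms are regular open, or equivalently adopting the precise notion of "invariant atom" used in the cited statement. The density input supplied by minimality in the middle step is the other indispensable ingredient, but it is comparatively routine once the rigidity observation pins $w_{\mathbf{k}} = i$ on the interior of $P_i$.
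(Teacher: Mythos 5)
The paper does not actually prove this lemma; it is quoted from \cite{Labb2021} without proof, so there is no in-paper argument to compare yours against. Judged on its own, your argument is correct in substance and is essentially the proof in the cited source: two points $m_1,m_2$ of $\bigcap_{n=0}^{\infty}\overline{D_n}(w)$ shadow the same itinerary of closed atoms; wherever the orbit of $m_1$ enters the open atom $P_i$ the symbol is forced to be $i$, so the translate by $t=m_2-m_1$ of a dense subset of $P_i$ lies in $\overline{P_i}$; taking closures and symmetrizing gives $\overline{P_i}+t=\overline{P_i}$; triviality of the stabilizer then forces $t=0$. Two remarks. First, your opening claim that $m\in\bigcap_n\overline{D_n}(w)$ holds \emph{precisely} when $R^{\mathbf{k}}(m)\in\overline{P_{w_{\mathbf{k}}}}$ for all $\mathbf{k}$ is only an inclusion (the closure of an intersection can be strictly smaller than the intersection of the closures); you use only the correct direction, so nothing breaks, but the ``precisely'' should be dropped. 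Second, the step you flag yourself --- passing from $\overline{P_i}+t=\overline{P_i}$ to $P_i+t=P_i$ --- is the one genuine gap relative to the literal statement: a topological partition as defined in this paper need not have regular-open atoms (an atom could be, say, an open polygon with a line segment deleted, whose closure still tiles correctly), and then invariance of the closure does not yield invariance of the atom. In the intended setting the atoms are open convex polygons, hence regular open, and the partitions $\mathcal{P}_{16}$, $\mathcal{P}_{24}$, and $\mathcal{P}_2$ of this paper are of that form, so the argument goes through there; but to make your proof self-contained you should either add regular-openness as a standing hypothesis or read ``invariant'' in the lemma as applying to $\overline{P_i}$.
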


The following definition is taken from \cite{Labb2021}.

\begin{definition}\label{def:Markov} A topological partition $\mathcal{P}$ of $M$ is a \textbf{Markov partition} for $(M,\Z^2,R)$ if 
\begin{itemize}
    \item $\mathcal{P}$ gives a symbolic representation of $(M,\Z^2,R)$ and
    \item $\mathcal{X}_{\mathcal{P},R}$ is a shift of finite type (SFT).
\end{itemize}\end{definition}

\subsection{Conjugacy and Factor Maps}
With $(M,\Z^2,R)$ and $\mathcal{P}$ encoding a space of configurations $(\mathcal{X}_{\mathcal{P},R},\Z^2,\sigma)$, it is natural to consider morphisms from $(M,\Z^2,R)$ to $(\mathcal{X}_{\mathcal{P},R},\Z^2,\sigma)$ and vice versa. Let $(X,G,S)$ and $(Y,G,T)$ be topological dynamical systems having the same group G. A \textbf{\emph{homomorphism}} of dynamical systems is a continuous map $f \!:\! X \rightarrow Y$ such that $T^g \circ f = f \circ S^g$ for all $g \in G$. If $f$ is a homomorphism, we call $f$ an \textbf{\emph{embedding}} if it is one-to-one; we call $f$ a \textbf{\emph{factor map}} if it is onto, in which case we call $Y$ a \textbf{\emph{factor}} of $X$ and $X$ an \textbf{\emph{extension}} of $Y$; and we call $f$  a \textbf{\emph{topological conjugacy}} if it is bijective with continuous inverse, in which case we say $(X,G,S)$ and $(Y,G,T)$ are \textbf{\emph{topologically conjugate}}.

\subsection{From Symbolic  Representations to Wang Shifts}\label{sec:symbrep}

Here we borrow more theory from \cite{Labb2021}. The idea that we want to make precise is that when $\mathcal{P}$ gives a symbolic representation of $(M,\Z^2,R)$, there is a protoset $\mathcal{T}$ such that $\mathcal{X}_{\mathcal{P},R} \subset \Omega_{T}$; if $\mathcal{T}$ happens to be the protoset of interest (i.e., the protoset you used to generate the dot pattern which in turn you used to form $\mathcal{P}$), then we have necessary result that orbits of points under the action of $R$ in $M$ and the partition $\mathcal{P}$ encode valid tilings.

Suppose $\mathcal{P} = \{P_a\}_{a \in \mathcal{A}}$ is a topological partition of the 2-dimensional torus $\mathbf{T}$, and suppose that $\mathcal{P}$ gives a symbolic representation of $(\mathbf{T},\Z^2,R)$. The \textbf{boundary} of $\mathcal{P}$ is the set \[\Delta = \bigcup_{a \in \mathcal{A}}\partial P_a,\] and \[ \Delta_{\mathcal{P},R} = \bigcup_{\mathbf{n}\in \Z^2} R^{\mathbf{n}}(\Delta) \subset \mathbf{T}. \] is the set of points in $\mathbf{T}$ whose orbits intersect $\Delta$. It can be seen that $\mathbf{T} \setminus \Delta_{\mathcal{P},R}$ is dense in $\mathbf{T}$. We will assume that $\Delta$ consists of straight line segments.

We get the following from \cite{Labb2021}:

\begin{proposition}\label{prop:factor-map}{\rm\cite[Prop.~5.1]{Labb2021}}
    Let $\mathcal{P}$ give a symbolic representation of the dynamical system $(\mathbf{T},\Z^2,R)$ such that $R$ is a $\Z^2$-rotation on $\mathbf{T}$. Let $f:\mathcal{X}_{\mathcal{P},R}\to \mathbf{T}$ be defined such that $f(w)$ is the unique point in the intersection $\cap_{n=0}^{\infty}\overline{D}_n(w)$. The map $f$ is a factor map from $(\mathcal{X}_{\mathcal{P},R},\Z^2,\sigma)$ to $(\mathbf{T},\Z^2,R)$ such that $R^{\boldsymbol{k}}\circ f = f\circ\sigma^{\boldsymbol{k}}$ for every $\boldsymbol{k}\in\Z^2$. The map $f$ is  one-to-one on $f^{-1}(\mathbf{T}\setminus\Delta_{\mathcal{P},R})$. \end{proposition}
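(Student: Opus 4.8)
The plan is to verify in turn that $f$ is well defined, continuous, equivariant, surjective, and injective off the boundary, treating these as five short arguments, each drawing on a single structural input: that $\mathcal{X}_{\mathcal{P},R}$ is a subshift (hence a compact metric space), that each $R^{\mathbf{k}}$ is a homeomorphism of $\mathbf{T}$, that $\mathcal{P}$ gives a symbolic representation, and that $\mathbf{T}\setminus\Delta_{\mathcal{P},R}$ is dense in $\mathbf{T}$. Well-definedness is immediate, since the symbolic representation hypothesis guarantees that $\bigcap_{n=0}^{\infty}\overline{D}_n(w)$ is a single point. For continuity I would first note that, because the nested compact sets $\overline{D}_n(w)$ meet in exactly one point, their diameters tend to $0$: otherwise a sequence of pairs of $\epsilon$-separated points, one pair from each $\overline{D}_n(w)$, would subconverge by compactness to two distinct points of the intersection. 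Then, if $w'$ agrees with $w$ on the ball $\{\|\mathbf{k}\|\le n\}$, we have $D_n(w')=D_n(w)$, so both $f(w)$ and $f(w')$ lie in $\overline{D}_n(w)$; hence $d(f(w),f(w'))\le\operatorname{diam}\overline{D}_n(w)\to 0$ as the patch of agreement grows, which is exactly continuity for the metric on $\mathcal{X}_{\mathcal{P},R}$.

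Equivariance is a reindexing computation together with a cofinality remark. Writing $(\sigma^{\mathbf{k}}w)_{\mathbf{m}}=w_{\mathbf{m}+\mathbf{k}}$ and substituting $\mathbf{j}=\mathbf{m}+\mathbf{k}$ gives
\[
D_n(\sigma^{\mathbf{k}}w)=\bigcap_{\|\mathbf{j}-\mathbf{k}\|\le n}R^{-(\mathbf{j}-\mathbf{k})}(P_{w_{\mathbf{j}}})=R^{\mathbf{k}}\bigcap_{\|\mathbf{j}-\mathbf{k}\|\le n}R^{-\mathbf{j}}(P_{w_{\mathbf{j}}}).
\]
Since the index families $\{\mathbf{j}:\|\mathbf{j}\|\le n\}$ and $\{\mathbf{j}:\|\mathbf{j}-\mathbf{k}\|\le n\}$ are mutually cofinal, the two nested intersections have the same limit, and because $R^{\mathbf{k}}$ is a homeomorphism it commutes with closure and with intersection. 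Thus $\bigcap_n\overline{D}_n(\sigma^{\mathbf{k}}w)=R^{\mathbf{k}}\bigcap_n\overline{D}_n(w)$, which yields $f(\sigma^{\mathbf{k}}w)=R^{\mathbf{k}}f(w)$ for every $\mathbf{k}\in\Z^2$.

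For surjectivity I would avoid building a preimage for every point directly, because the naive recipe ``let $w_{\mathbf{k}}$ be the index of the atom containing $R^{\mathbf{k}}m$'' is ambiguous precisely when the orbit of $m$ meets $\Delta$. Instead, for $m\in\mathbf{T}\setminus\Delta_{\mathcal{P},R}$ each $R^{\mathbf{k}}m$ lies in a unique atom $P_{a_{\mathbf{k}}}$; setting $w_{\mathbf{k}}=a_{\mathbf{k}}$ produces a configuration every finite pattern of which is realized by the honest point $R^{\mathbf{n}}m$, so $w\in\mathcal{X}_{\mathcal{P},R}$ and $m\in D_n(w)$ for all $n$, giving $f(w)=m$. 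Hence $\mathbf{T}\setminus\Delta_{\mathcal{P},R}\subseteq f(\mathcal{X}_{\mathcal{P},R})$. Now $\mathcal{X}_{\mathcal{P},R}$ is compact and $f$ is continuous, so $f(\mathcal{X}_{\mathcal{P},R})$ is a closed subset of $\mathbf{T}$; containing the dense set $\mathbf{T}\setminus\Delta_{\mathcal{P},R}$, it must equal $\mathbf{T}$. Finally, for injectivity on $f^{-1}(\mathbf{T}\setminus\Delta_{\mathcal{P},R})$, suppose $f(w)=f(w')=m\notin\Delta_{\mathcal{P},R}$. From $m\in\overline{D}_n(w)\subseteq R^{-\mathbf{k}}(\overline{P_{w_{\mathbf{k}}}})$ for all $\mathbf{k}$ we get $R^{\mathbf{k}}m\in\overline{P_{w_{\mathbf{k}}}}$; since $R^{\mathbf{k}}m\notin\Delta$ it lies in the open atom $P_{w_{\mathbf{k}}}$, and as $R^{\mathbf{k}}m$ belongs to exactly one atom we conclude $w_{\mathbf{k}}=w'_{\mathbf{k}}$ for every $\mathbf{k}$, i.e.\ $w=w'$.

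I expect the main obstacle to be surjectivity onto the orbit-boundary points of $\mathbf{T}$: these cannot be reached by the explicit symbolic construction (which is exactly what breaks down on $\Delta_{\mathcal{P},R}$), so the argument must instead route through the closedness of the continuous image of the compact space $\mathcal{X}_{\mathcal{P},R}$ together with the density of $\mathbf{T}\setminus\Delta_{\mathcal{P},R}$. The only analytic subtlety elsewhere is the diameter-shrinking step underlying continuity, and the only bookkeeping care needed is the cofinality observation in the equivariance computation.
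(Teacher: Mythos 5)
Your proof is correct. Note that the paper itself states this proposition without proof, quoting it directly as \cite[Prop.~5.1]{Labb2021}, so there is no in-paper argument to compare against; your write-up is essentially a faithful reconstruction of the standard argument used there (and in the Lind--Marcus treatment of Markov partitions): well-definedness from the symbolic-representation hypothesis, continuity via the shrinking diameters of the nested compacta $\overline{D}_n(w)$, equivariance by reindexing and using that $R^{\mathbf{k}}$ is a homeomorphism, surjectivity by combining the explicit preimages of points off $\Delta_{\mathcal{P},R}$ with the closedness of the continuous image of a compact space and the density of $\mathbf{T}\setminus\Delta_{\mathcal{P},R}$, and injectivity off the boundary because each $R^{\mathbf{k}}m$ then lies in the interior of a unique atom. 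All five steps check out, including the two points you flag as delicate (the diameter-shrinking lemma and the cofinality of the shifted index balls).
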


Following Section 4 of \cite{Labb2021}, for each $\boldsymbol{x} \in \mathbf{T} \setminus \Delta_{\mathcal{P},R}$, we define a map $\textsc{Cfg}_{\boldsymbol{x}}^{\mathcal{P},R} \,:\, \Z^2 \rightarrow \mathcal{A}$ by \[\textsc{Cfg}_{\boldsymbol{x}}^{\mathcal{P},R}(\mathbf{n}) = a \text{\hspace{.2in}if and only if \hspace{.2in} } R^{\mathbf{n}}(\boldsymbol{x}) \in P_a\] (``$\textsc{Cfg}$" for ``configuration"). $\textsc{Cfg}_{\boldsymbol{x}}^{\mathcal{P},R}$ gives rise to a map $\textsc{SR}\,:\, \mathbf{T} \setminus \Delta_{\mathcal{P},R} \rightarrow \mathcal{A}^{\Z^2}$ defined by \[\textsc{SR}(\boldsymbol{x}) = \textsc{Cfg}_{\boldsymbol{x}}^{\mathcal{P},R}\] (``$\textsc{SR}$"
 for ``symbolic representation"). Next, we wish to extend $\textsc{SR}$ to a map on all $\mathbf{T}$, including $\Delta_{\mathcal{P},R}$. This is accomplished by choosing a direction $\mathbf{v}$ that is not parallel to any of the lines comprising $\Delta$, and then defining $\textsc{SR}^{\mathbf{v}}\,:\, \mathbf{T} \rightarrow \mathcal{A}^{\Z^2}$ by \[\textsc{SR}^{\mathbf{v}}(\boldsymbol{x}) = \lim_{\varepsilon \rightarrow 0^{+}} \textsc{SR}(\boldsymbol{x} + \varepsilon \mathbf{v}).\] The idea here is that if $R^{\mathbf{n}}(\boldsymbol{x}) \in \Delta$, there is ambiguity as to which atom's label is assigned to $\mathbf{n}$ by $\textsc{Cfg}$; the direction $\mathbf{v}$ settles that ambiguity by assigning to $\mathbf{n}$ the label of the atom on the $\mathbf{v}$ side of $\Delta$ where $R^{\mathbf{n}}(\boldsymbol{x})$ lies.

The following lemma from \cite{Labb2021} characterizes $\mathcal{X}_{\mathcal{P},R}$ in terms of map $\textsc{SR}$.

\begin{lemma}
For each dirction $\mathbf{v}$ not parallel to a line segment in $\Delta$, we have \[\overline{\textsc{SR}^{\mathbf{v}}(\mathbf{T})} = \overline{\textsc{SR}(\mathbf{T} \setminus \Delta_{\mathcal{P},R})} = \mathcal{X}_{\mathcal{P},R}\] where the overline indicates topological closure in the compact product topology on $\mathcal{A}^{\Z^2}$.
\end{lemma}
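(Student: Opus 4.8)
The plan is to prove the two claimed equalities by establishing a cycle of inclusions among the three sets
\[
X_1=\overline{\textsc{SR}^{\mathbf{v}}(\mathbf{T})},\qquad
X_2=\overline{\textsc{SR}(\mathbf{T}\setminus\Delta_{\mathcal{P},R})},\qquad
X_3=\mathcal{X}_{\mathcal{P},R},
\]
namely $X_1\subseteq X_2\subseteq X_3\subseteq X_1$. A preliminary observation I would record first is that $X_3$ is closed in the compact product topology: if a sequence in $\mathcal{X}_{\mathcal{P},R}$ converges to some $x$, then on every finite window $x$ eventually agrees with a member of $\mathcal{X}_{\mathcal{P},R}$, so every finite pattern of every translate of $x$ lies in $\mathcal{L}_{\mathcal{P},R}$, whence $x\in\mathcal{X}_{\mathcal{P},R}$. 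Since $X_2$ and $X_3$ are then both closed, it suffices to prove the three inclusions $\textsc{SR}(\mathbf{T}\setminus\Delta_{\mathcal{P},R})\subseteq X_3$, $\textsc{SR}^{\mathbf{v}}(\mathbf{T})\subseteq X_2$, and $X_3\subseteq X_1$.

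For the inclusion $\textsc{SR}(\mathbf{T}\setminus\Delta_{\mathcal{P},R})\subseteq\mathcal{X}_{\mathcal{P},R}$ I would fix $\boldsymbol{x}\in\mathbf{T}\setminus\Delta_{\mathcal{P},R}$ and set $w=\textsc{SR}(\boldsymbol{x})$. Because $R$ is a $\Z^2$-rotation, $\textsc{SR}$ intertwines the actions, $\sigma^{\mathbf{n}}(w)=\textsc{SR}(R^{\mathbf{n}}(\boldsymbol{x}))$ (using that $\Delta_{\mathcal{P},R}$ is $R$-invariant), so for any finite $S$ and any $\mathbf{n}$ the pattern $u=\pi_S\circ\sigma^{\mathbf{n}}(w)$ satisfies $R^{\mathbf{k}}(R^{\mathbf{n}}(\boldsymbol{x}))\in P_{u_{\mathbf{k}}}$ for all $\mathbf{k}\in S$; hence $R^{\mathbf{n}}(\boldsymbol{x})\in\bigcap_{\mathbf{k}\in S}R^{-\mathbf{k}}(P_{u_{\mathbf{k}}})$, this intersection is nonempty, and $u\in\mathcal{L}_{\mathcal{P},R}$, giving $w\in\mathcal{X}_{\mathcal{P},R}$ and thus $X_2\subseteq X_3$. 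For the reverse inclusion $X_3\subseteq X_1$ I would take $w\in\mathcal{X}_{\mathcal{P},R}$ and approximate it on an arbitrarily large window $S$: since $w|_S\in\mathcal{L}_{\mathcal{P},R}$, the open set $D=\bigcap_{\mathbf{k}\in S}R^{-\mathbf{k}}(P_{w_{\mathbf{k}}})$ is nonempty, and choosing any $\boldsymbol{x}\in D$ gives $R^{\mathbf{k}}(\boldsymbol{x})\in P_{w_{\mathbf{k}}}$ for $\mathbf{k}\in S$. As each $P_{w_{\mathbf{k}}}$ is open and $R^{\mathbf{k}}(\boldsymbol{x}+\varepsilon\mathbf{v})=R^{\mathbf{k}}(\boldsymbol{x})+\varepsilon\mathbf{v}$ remains in $P_{w_{\mathbf{k}}}$ for small $\varepsilon>0$, we get $\textsc{SR}^{\mathbf{v}}(\boldsymbol{x})|_S=w|_S$. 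Letting the window grow produces a sequence in $\textsc{SR}^{\mathbf{v}}(\mathbf{T})$ converging to $w$, so $w\in X_1$.

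The remaining and most delicate step is $\textsc{SR}^{\mathbf{v}}(\mathbf{T})\subseteq X_2$, for which the hard part is justifying that the limit defining $\textsc{SR}^{\mathbf{v}}(\boldsymbol{x})$ exists and is a genuine limit of $\textsc{SR}$-images of points off the orbit-boundary. I would argue coordinatewise: for a fixed $\mathbf{n}$, the point $R^{\mathbf{n}}(\boldsymbol{x}+\varepsilon\mathbf{v})=R^{\mathbf{n}}(\boldsymbol{x})+\varepsilon\mathbf{v}$ lies, for all small $\varepsilon>0$, in a single open atom, because locally $\Delta$ is a finite union of line segments and $\mathbf{v}$ is parallel to none of them, so the ray leaves $\Delta$ immediately and enters one fixed atom; hence the $\mathbf{n}$-th symbol of $\textsc{SR}(\boldsymbol{x}+\varepsilon\mathbf{v})$ is eventually constant. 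Moreover the set of $\varepsilon$ with $\boldsymbol{x}+\varepsilon\mathbf{v}\in\Delta_{\mathcal{P},R}$ is countable (each coordinate contributes finitely many, since $\mathbf{v}$ meets each segment at most once), so the limit can be taken through good $\varepsilon\to0^{+}$ along which $\textsc{SR}$ is defined; on each finite window this convergence is eventually constant, which exhibits $\textsc{SR}^{\mathbf{v}}(\boldsymbol{x})$ as a limit of elements of $\textsc{SR}(\mathbf{T}\setminus\Delta_{\mathcal{P},R})$ and so places it in $X_2$. Taking closures then yields $X_1\subseteq X_2\subseteq X_3\subseteq X_1$, hence equality throughout.
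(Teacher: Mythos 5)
The paper itself offers no proof of this lemma: it is imported verbatim from Labb\'{e} \cite{Labb2021} as a quoted result, so there is nothing in-paper to compare your argument against line by line. Judged on its own, your proof is correct and self-contained, and it follows the natural route: the cycle of inclusions $\overline{\textsc{SR}^{\mathbf{v}}(\mathbf{T})} \subseteq \overline{\textsc{SR}(\mathbf{T}\setminus\Delta_{\mathcal{P},R})} \subseteq \mathcal{X}_{\mathcal{P},R} \subseteq \overline{\textsc{SR}^{\mathbf{v}}(\mathbf{T})}$, with the preliminary observation that $\mathcal{X}_{\mathcal{P},R}$ is closed. The first inclusion of images ($\textsc{SR}(\boldsymbol{x})$ has all patterns allowed because $R^{\mathbf{n}}(\boldsymbol{x})$ itself witnesses nonemptiness of $\bigcap_{\mathbf{k}\in S}R^{-\mathbf{k}}(P_{u_{\mathbf{k}}})$) and the last inclusion (picking a point in the nonempty open cylinder set $\bigcap_{\mathbf{k}\in S}R^{-\mathbf{k}}(P_{w_{\mathbf{k}}})$ to reproduce $w$ on the window $S$) are exactly right. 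The delicate step, $\textsc{SR}^{\mathbf{v}}(\mathbf{T})\subseteq\overline{\textsc{SR}(\mathbf{T}\setminus\Delta_{\mathcal{P},R})}$, is also handled correctly: you use that $\Delta$ is a finite union of segments none parallel to $\mathbf{v}$ (so the ray $\boldsymbol{x}+\varepsilon\mathbf{v}$ exits $R^{-\mathbf{n}}(\Delta)$ instantly and the $\mathbf{n}$-th symbol stabilizes), and that the bad set of $\varepsilon$ is countable, so good parameters $\varepsilon_j\to 0^{+}$ exist. Two points deserve one more clause each if you write this up: the claim that the short arc $\{R^{\mathbf{n}}(\boldsymbol{x})+\varepsilon\mathbf{v}: 0<\varepsilon<\varepsilon_0\}$ lies in a \emph{single} atom should invoke connectedness of the arc together with the fact that the atoms are disjoint open sets covering $\mathbf{T}\setminus\Delta$; and the identity $R^{\mathbf{k}}(\boldsymbol{x}+\varepsilon\mathbf{v})=R^{\mathbf{k}}(\boldsymbol{x})+\varepsilon\mathbf{v}$ should be flagged as using that $R$ is a $\Z^2$-rotation (translation modulo the lattice), which is the standing hypothesis in this section.
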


We summarize further pertinent results from  \cite{Labb2021} here: \begin{enumerate}
    \item $\textsc{SR}^{\mathbf{v}} \,:\, \mathbf{T} \rightarrow \mathcal{X}_{\mathcal{P},R}$ is 1-1, and so
    \item the inverse map can be extended to the factor map $f:\mathcal{X}_{\mathcal{P},R} \rightarrow \mathbf{T}$ such that for each configuration $\omega \in \mathcal{X}_{\mathcal{P},R}$, $f(\omega) = \cap_{n=0}^{\infty}\overline{D_\omega}$ (so $f$ is as defined in Proposition \ref{prop:factor-map}). 
    \item $f$ is 1-1 on $f^{-1}(\mathbf{T} \setminus \Delta_{\mathcal{P},R})$.
    \item The factor map commutes the shift actions $R$ on $\mathbf{T}$ and $\sigma$ on $\mathcal{X}_{\mathcal{P},R}$; that is, $R^{\mathbf{n}}f = f \sigma^{\mathbf{n}}$.
\end{enumerate}

Using the factor map $f$ and these properties of $f$, Labb\'{e} proved the following important fact \cite{Labb2021}:

\begin{lemma}
Suppose $\mathcal{P}$ gives a symbolic representation of $(\mathbf{T},\Z^2,R)$. Then \begin{enumerate}
    \item if $(\mathbf{T},\Z^2,R)$ is minimal, then $(\mathcal{X}_{\mathcal{P},R},\Z^2,\sigma)$ is minimal, and 
    \item if $R$ is a free $\Z^2$-action on $\mathbf{T}$, then $\mathcal{X}_{\mathcal{P},R}$ is aperiodic.
\end{enumerate}
\label{lem:aperiodic}\end{lemma}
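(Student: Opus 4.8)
The plan is to prove both parts by pushing the relevant structure through the factor map $f \colon \mathcal{X}_{\mathcal{P},R} \to \mathbf{T}$ of Proposition \ref{prop:factor-map}, which is continuous, surjective, and equivariant in the sense that $R^{\mathbf{k}} \circ f = f \circ \sigma^{\mathbf{k}}$ for all $\mathbf{k} \in \Z^2$, and which is injective on $f^{-1}(\mathbf{T} \setminus \Delta_{\mathcal{P},R})$.

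For part (2), aperiodicity, I would argue directly that $\sigma$ acts freely. Suppose $\sigma^{\mathbf{n}}(w) = w$ for some $w \in \mathcal{X}_{\mathcal{P},R}$ and $\mathbf{n} \in \Z^2$. Applying $f$ and using equivariance gives $f(w) = f(\sigma^{\mathbf{n}}(w)) = R^{\mathbf{n}}(f(w))$, so the point $f(w) \in \mathbf{T}$ is fixed by $R^{\mathbf{n}}$. Since $R$ is assumed to act freely on $\mathbf{T}$, this forces $\mathbf{n} = \mathbf{0}$. Hence no nontrivial shift fixes any configuration, which is exactly the statement that $\mathcal{X}_{\mathcal{P},R}$ is aperiodic. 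This part is essentially immediate once one observes that aperiodicity is precisely freeness of $\sigma$.

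For part (1), minimality, I would use the definition directly: let $Y \subseteq \mathcal{X}_{\mathcal{P},R}$ be a nonempty, closed, $\sigma$-invariant subset, and show $Y = \mathcal{X}_{\mathcal{P},R}$. Since $\mathcal{X}_{\mathcal{P},R}$ is a closed subset of the compact space $\mathcal{A}^{\Z^2}$, the set $Y$ is compact, so $f(Y)$ is a compact (hence closed) nonempty subset of $\mathbf{T}$; equivariance of $f$ together with $\sigma$-invariance of $Y$ makes $f(Y)$ an $R$-invariant set, since $R^{\mathbf{k}}(f(Y)) = f(\sigma^{\mathbf{k}}(Y)) = f(Y)$. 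Minimality of $(\mathbf{T},\Z^2,R)$ then gives $f(Y) = \mathbf{T}$. The crucial step is to upgrade this surjectivity onto $\mathbf{T}$ into containment of all of $\mathcal{X}_{\mathcal{P},R}$: for each $x \in \mathbf{T} \setminus \Delta_{\mathcal{P},R}$ there is a point of $Y$ mapping to $x$, and because $f$ is injective on $f^{-1}(\mathbf{T} \setminus \Delta_{\mathcal{P},R})$ that point must be the unique preimage $\textsc{SR}(x)$; thus $\textsc{SR}(x) \in Y$ for every such $x$. Taking closures and invoking the characterization $\overline{\textsc{SR}(\mathbf{T} \setminus \Delta_{\mathcal{P},R})} = \mathcal{X}_{\mathcal{P},R}$, together with density of $\mathbf{T} \setminus \Delta_{\mathcal{P},R}$ in $\mathbf{T}$, we obtain $\mathcal{X}_{\mathcal{P},R} = \overline{\textsc{SR}(\mathbf{T}\setminus\Delta_{\mathcal{P},R})} \subseteq \overline{Y} = Y$, so $Y = \mathcal{X}_{\mathcal{P},R}$.

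The main obstacle is conceptual rather than computational: one cannot simply invoke the standard fact that a factor of a minimal system is minimal, because here $\mathbf{T}$ is the factor and $\mathcal{X}_{\mathcal{P},R}$ is the extension, and minimality does not in general pass upward to an extension. The argument must instead exploit the almost-one-to-one nature of $f$, namely that it is a bijection over the dense invariant set $\mathbf{T} \setminus \Delta_{\mathcal{P},R}$, in order to reconstruct a dense subset of $\mathcal{X}_{\mathcal{P},R}$ inside any closed invariant $Y$. The only remaining points requiring care are that $f(Y)$ is genuinely closed and $R$-invariant, which follows from compactness and equivariance, and that the closure identity for $\textsc{SR}$ applies, which is supplied by the results quoted above.
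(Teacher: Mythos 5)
Your proof is correct. Note that the paper itself gives no proof of this lemma: it is quoted verbatim from \cite{Labb2021}, so there is no in-paper argument to compare against. Your argument is, however, essentially the standard one (and the one used in the cited source): part (2) is the immediate consequence of equivariance of the factor map $f$ together with freeness of $R$, and part (1) correctly avoids the trap that minimality does not pass from a factor up to an extension by exploiting that $f$ is almost one-to-one --- any nonempty closed $\sigma$-invariant $Y$ has $f(Y)=\mathbf{T}$ by minimality downstairs, hence $Y$ contains the unique fiber point $\textsc{SR}(x)$ over every $x\in\mathbf{T}\setminus\Delta_{\mathcal{P},R}$, and the closure identity $\overline{\textsc{SR}(\mathbf{T}\setminus\Delta_{\mathcal{P},R})}=\mathcal{X}_{\mathcal{P},R}$ forces $Y=\mathcal{X}_{\mathcal{P},R}$. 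The only steps worth making explicit are that $f(\textsc{SR}(x))=x$ off $\Delta_{\mathcal{P},R}$ (which holds because $f$ extends the inverse of $\textsc{SR}^{\mathbf{v}}$) and that $\mathbf{T}\setminus\Delta_{\mathcal{P},R}$ is dense and $R$-invariant; both are supplied by the quoted results, so your proof stands as written.
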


The last results we quote from \cite{Labb2021} pertain to understanding how a topological partition $\mathcal{P}$ gives rise to Wang tile protoset $\mathcal{T}$ such that $\mathcal{X}_{\mathcal{P},R} \subset \Omega_{\mathcal{T}}$. To that end, suppose that $(\mathbf{T},\Z^2,R)$ is a dynamical system, let $\mathcal{P}_r = \{P_i\}_{i \in I}$ (the \emph{right side partition}) and $\mathcal{P}_b = \{P_{j}\}_{j \in J}$ (the \emph{bottom side partition}) be two finite topological partitions of $\mathbf{T}$, and then let $\mathcal{P}_l = \{R^{(1,0)}(P_a)\,:\,P_a \in \mathcal{P}_r\}$ (the \emph{left side partition}) and $\mathcal{P}_b = \{R^{(0,1)}(P_a)\,:\,P_a \in \mathcal{P}_t\}$ (the \emph{top side partition}); the labels of the atoms of $\mathcal{P}_l$ and $\mathcal{P}_r$ (i.e. the set $I$) are the colors of the right and left sides of tiles in a soon-to-be-defined Wang tile protoset, and the labels of the atoms of $\mathcal{P}_b$ and $\mathcal{P}_t$ (i.e. the set $J$) are the colors of the top and bottom sides in that same Wang tile protoset. For each $(i,j,k,\ell) \in I \times J \times I \times J$, let \[P_{(i,j,k,\ell)} = P_i \cap P_j \cap P_k \cap P_{\ell}.\] Next, define \[\mathcal{T} = \{\tau \in I \times J \times I \times J \,:\, P_{\tau} \neq \emptyset\}.\] We interpret each $\tau \in \mathcal{T}$ as a Wang tile as described in Section \ref{subsec:dyn_sys_intro}. $\mathcal{T}$ is naturally associated with the \emph{tile partition} $\mathcal{P} = \{P_{\tau} \,:\, \tau \in \mathcal{T}\}$ which is the refinement of $\mathcal{P}_l$, $\mathcal{P}_r$, $\mathcal{P}_b$ and $\mathcal{P}_t$, and each point $\boldsymbol{x} \in \mathbf{T} \setminus \Delta$ corresponds to a unique tile in $\mathcal{T}$. With $\mathcal{P}$ defined as a partition of the torus $\mathcal{T}$ in this way, we find the following lemmas in \cite{Labb2021}.

\begin{lemma}
With $\mathcal{P}$ defined as above as a refinement of $\mathcal{P}_l$, $\mathcal{P}_r$, $\mathcal{P}_b$, and $\mathcal{P}_t$, we have $\mathcal{X}_{\mathcal{P},R} \subseteq \Omega_{\mathcal{T}}$. \label{lem:Chi_in_Omega}
\end{lemma}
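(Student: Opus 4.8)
The plan is to verify the two Wang matching conditions directly from the definition of $\mathcal{X}_{\mathcal{P},R}$ (Definition~\ref{dfn:symb_dyn_sys}) as the set of configurations whose every finite pattern is allowed, identifying the alphabet $\mathcal{A}$ with the tile set $\mathcal{T}$ via the tile partition. Fix a configuration $x \in \mathcal{X}_{\mathcal{P},R}$ and a position $\mathbf{n} \in \Z^2$; I would establish the horizontal condition $\text{Right}(x_{\mathbf{n}}) = \text{Left}(x_{\mathbf{n}+(1,0)})$, the vertical condition being entirely analogous. Applying the defining property of $\mathcal{X}_{\mathcal{P},R}$ to $\sigma^{\mathbf{n}}(x)$ and the two-cell set $S = \{(0,0),(1,0)\}$, the horizontal domino $\pi_S(\sigma^{\mathbf{n}}(x))$ is an allowed pattern, which by definition means that \[P_{x_{\mathbf{n}}} \cap R^{-(1,0)}\!\left(P_{x_{\mathbf{n}+(1,0)}}\right) \neq \emptyset.\]

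Next I would exploit the openness of the tile atoms together with the fact that $\mathcal{P}$ is the common refinement of the four side partitions, so that each tile atom $P_{(i,j,k,\ell)} = P_i \cap P_j \cap P_k \cap P_{\ell}$ is contained in the single $\mathcal{P}_r$-atom $P_i$ of right-color $i$, and likewise sits inside one atom of each of $\mathcal{P}_t$, $\mathcal{P}_l$, and $\mathcal{P}_b$. Choose a witness point $\mathbf{z}$ in the nonempty intersection above; then $\mathbf{z} \in P_{x_{\mathbf{n}}}$ and $R^{(1,0)}(\mathbf{z}) \in P_{x_{\mathbf{n}+(1,0)}}$. Since $P_{x_{\mathbf{n}}}$ is an open set lying in the single $\mathcal{P}_r$-atom of color $\text{Right}(x_{\mathbf{n}})$, the point $\mathbf{z}$ lies in that atom and off the boundary $\Delta$, so its $\mathcal{P}_r$-color is unambiguously $\text{Right}(x_{\mathbf{n}})$; similarly $R^{(1,0)}(\mathbf{z})$ lies in the $\mathcal{P}_l$-atom of color $\text{Left}(x_{\mathbf{n}+(1,0)})$.

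The crux is the collapse of these two readings onto a single atom of $\mathcal{P}_r$. Using the defining relation $\mathcal{P}_l = \{R^{(1,0)}(P_a) : P_a \in \mathcal{P}_r\}$, the point $R^{(1,0)}(\mathbf{z})$ lies in the $\mathcal{P}_l$-atom $R^{(1,0)}(P_a)$ if and only if $\mathbf{z}$ lies in $P_a$; hence the $\mathcal{P}_l$-color of $R^{(1,0)}(\mathbf{z})$ equals the $\mathcal{P}_r$-color of $\mathbf{z}$, which was already identified as $\text{Right}(x_{\mathbf{n}})$. This forces $\text{Left}(x_{\mathbf{n}+(1,0)}) = \text{Right}(x_{\mathbf{n}})$. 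Running the identical argument vertically, with the domino $S = \{(0,0),(0,1)\}$ and the translate relating $\mathcal{P}_t$ to $\mathcal{P}_b$, gives $\text{Top}(x_{\mathbf{n}}) = \text{Bottom}(x_{\mathbf{n}+(0,1)})$. As $\mathbf{n}$ was arbitrary, $x$ is a valid configuration and $x \in \Omega_{\mathcal{T}}$.

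I expect the delicate points to be the following. First, the orientation of the lattice translates defining the side partitions must be exactly the one for which the two adjacent orbit points $\mathbf{z}$ and $R^{(1,0)}(\mathbf{z})$ read their shared edge from the same atom; checking that the shift directions in $\mathcal{P}_l$ relative to $\mathcal{P}_r$ and in $\mathcal{P}_t$ relative to $\mathcal{P}_b$ are consistent with the matching convention $\text{Right}(x_{\mathbf{n}}) = \text{Left}(x_{\mathbf{n}+(1,0)})$ is precisely what makes the final collapse valid. Second, the unambiguity of the color readings at the witness point is not automatic for arbitrary points of $\mathbf{T}$, but is guaranteed here because the witness lies in an open tile atom, hence in the interior of a unique atom of each side partition and off $\Delta$. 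As an alternative to this direct route, one could verify the matching conditions only for $x = \textsc{SR}(\mathbf{x})$ with $\mathbf{x} \in \mathbf{T} \setminus \Delta_{\mathcal{P},R}$ and then pass to the closure via $\mathcal{X}_{\mathcal{P},R} = \overline{\textsc{SR}(\mathbf{T}\setminus\Delta_{\mathcal{P},R})}$, using that the SFT $\Omega_{\mathcal{T}}$ is closed; the direct argument above has the advantage of needing neither the closure nor the boundary-resolving vector $\mathbf{v}$.
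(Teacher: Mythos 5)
The paper does not prove this lemma at all --- it is quoted verbatim from Labb\'{e} \cite{Labb2021} --- so there is no in-paper argument to compare against. Your direct proof is correct and is essentially the standard one from Labb\'{e}'s work: the allowed-pattern condition for the horizontal domino $S=\{(0,0),(1,0)\}$ yields a witness $\mathbf{z}\in P_{x_{\mathbf{n}}}\cap R^{-(1,0)}(P_{x_{\mathbf{n}+(1,0)}})$; since the tile atom $P_{x_{\mathbf{n}}}$ sits inside the single $\mathcal{P}_r$-atom indexed by $\mathrm{Right}(x_{\mathbf{n}})$ and $R^{(1,0)}(\mathbf{z})$ sits inside the $\mathcal{P}_l$-atom indexed by $\mathrm{Left}(x_{\mathbf{n}+(1,0)})$, the relation $\mathcal{P}_l=\{R^{(1,0)}(P_a): P_a\in\mathcal{P}_r\}$ places $\mathbf{z}$ in two atoms of $\mathcal{P}_r$ simultaneously, and disjointness of the atoms forces the two indices to coincide. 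Your worry about orientation conventions is the right one to flag, but it resolves in your favor with the paper's definitions (note the paper itself contains a typo, writing $\mathcal{P}_b$ where it means $\mathcal{P}_t$ in the displayed definition of the top side partition); and your observation that the witness point needs no boundary-resolving vector $\mathbf{v}$ --- only the pairwise disjointness of open atoms --- is exactly why the direct route works without passing through $\textsc{SR}^{\mathbf{v}}$ and closures.
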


\begin{lemma}
For every direction $\mathbf{v}$ not parallel to a line segment in $\Delta$, $\textsc{SR}^{\mathbf{v}}\,:\, \mathbf{T} \rightarrow \Omega_{\mathcal{T}}$ is a one-to-one map.
\end{lemma}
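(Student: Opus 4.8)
The plan is to recognize that this statement is essentially the combination of two facts already in hand: the injectivity of $\textsc{SR}^{\mathbf{v}}$ as a map into $\mathcal{X}_{\mathcal{P},R}$ (item (1) of the summarized results above) together with the containment $\mathcal{X}_{\mathcal{P},R} \subseteq \Omega_{\mathcal{T}}$ (Lemma \ref{lem:Chi_in_Omega}). Injectivity is preserved under post-composition with the inclusion $\mathcal{X}_{\mathcal{P},R} \hookrightarrow \Omega_{\mathcal{T}}$, so once both inputs are cited the conclusion follows formally.

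First I would check that $\textsc{SR}^{\mathbf{v}}$ really takes values in $\Omega_{\mathcal{T}}$. By the (unlabeled) lemma characterizing $\mathcal{X}_{\mathcal{P},R}$ via $\textsc{SR}$, we have $\textsc{SR}^{\mathbf{v}}(\mathbf{T}) \subseteq \overline{\textsc{SR}^{\mathbf{v}}(\mathbf{T})} = \mathcal{X}_{\mathcal{P},R}$, and Lemma \ref{lem:Chi_in_Omega} gives $\mathcal{X}_{\mathcal{P},R} \subseteq \Omega_{\mathcal{T}}$. Hence every configuration $\textsc{SR}^{\mathbf{v}}(\boldsymbol{x})$ is a valid Wang tiling, so $\textsc{SR}^{\mathbf{v}}$ may legitimately be regarded as a map $\mathbf{T} \rightarrow \Omega_{\mathcal{T}}$. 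Here I would also record that the labeling sending an alphabet symbol to the tile $\tau$ with $P_\tau \neq \emptyset$ is a bijection between $\mathcal{A}$ and the protoset $\mathcal{T}$, by the very definition $\mathcal{T} = \{\tau : P_\tau \neq \emptyset\}$; consequently two configurations agree as elements of $\Omega_{\mathcal{T}}$ precisely when they agree as elements of $\mathcal{A}^{\Z^2}$.

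For injectivity, the cleanest route is to exhibit a left inverse. The factor map $f : \mathcal{X}_{\mathcal{P},R} \rightarrow \mathbf{T}$ of Proposition \ref{prop:factor-map} satisfies $f \circ \textsc{SR}^{\mathbf{v}} = \mathrm{id}_{\mathbf{T}}$ (this is the content of item (2) above, that $f$ extends the inverse of $\textsc{SR}^{\mathbf{v}}$). Thus if $\boldsymbol{x}, \boldsymbol{y} \in \mathbf{T}$ satisfy $\textsc{SR}^{\mathbf{v}}(\boldsymbol{x}) = \textsc{SR}^{\mathbf{v}}(\boldsymbol{y})$ in $\Omega_{\mathcal{T}}$, then by the bijection of alphabets the two configurations are equal in $\mathcal{X}_{\mathcal{P},R}$, and applying $f$ yields $\boldsymbol{x} = \boldsymbol{y}$.

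There is no substantial obstacle here; the real content has been front-loaded into the results being cited. The only point deserving care — and it is minor — is the bookkeeping about codomains: one must confirm that enlarging the target from $\mathcal{X}_{\mathcal{P},R}$ to the strictly larger $\Omega_{\mathcal{T}}$ cannot create new coincidences of image points, which is immediate since an injection remains injective when its codomain is enlarged along an inclusion. I would therefore keep the write-up short, presenting it as a corollary-style deduction from item (1) of the summary and Lemma \ref{lem:Chi_in_Omega}.
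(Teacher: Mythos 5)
Your deduction is logically sound, but you should be aware that the paper offers no proof of this lemma at all: it is one of several results imported verbatim from \cite{Labb2021}, so there is nothing internal to compare against. Taken as a corollary-style derivation from the other quoted results, your argument works: injectivity is a property of the map, not of its codomain, so item (1) of the summary ($\textsc{SR}^{\mathbf{v}}$ is one-to-one into $\mathcal{X}_{\mathcal{P},R}$) together with $\textsc{SR}^{\mathbf{v}}(\mathbf{T})\subseteq\mathcal{X}_{\mathcal{P},R}\subseteq\Omega_{\mathcal{T}}$ (the unlabeled characterization lemma plus Lemma~\ref{lem:Chi_in_Omega}) already gives the statement. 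Your observation that the alphabet $\mathcal{A}$ is identified with $\mathcal{T}=\{\tau : P_\tau\neq\emptyset\}$, so that equality in $\Omega_{\mathcal{T}}$ and in $\mathcal{A}^{\Z^2}$ coincide, is the right bookkeeping to include.

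One caution about your ``cleanest route'' via the left inverse: item (2) of the summary says $f$ \emph{extends the inverse of} $\textsc{SR}^{\mathbf{v}}$, a phrasing that already presupposes the injectivity you are trying to establish, so citing it makes the argument circular rather than cleaner. If you want a proof that does not simply restate item (1), the substantive step is to show directly that $f\circ\textsc{SR}^{\mathbf{v}}=\mathrm{id}_{\mathbf{T}}$, and this is where the standing hypothesis that $\mathcal{P}$ gives a symbolic representation earns its keep: for $w=\textsc{SR}^{\mathbf{v}}(\boldsymbol{x})$ one checks that $\boldsymbol{x}\in\bigcap_{n\geq 0}\overline{D_n}(w)$ (for $\boldsymbol{x}\in\Delta_{\mathcal{P},R}$ this uses that $\boldsymbol{x}+\varepsilon\mathbf{v}\in D_n(w)$ for all small $\varepsilon>0$, so $\boldsymbol{x}$ lies in the closure), and since that intersection is a single point by Definition~\ref{dfn:symbolic_rep}, it equals $\boldsymbol{x}$; injectivity then follows by applying $f$ from Proposition~\ref{prop:factor-map}. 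You should also state explicitly that the symbolic-representation hypothesis is in force, since without it the lemma is false.
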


\section{Application of Labb\'{e}'s Theory to Experimentally Determined Partitions}\label{apx:app}

In this section, we apply the theory as outlined in \ref{apx:dyn_sys} to the partition $\mathcal{P}_{16}$ corresponding to the Ammann A2-derived Wang protoset $\mathcal{T}_{16}$. Let us begin by pointing out that it is easily checked that the partition $\mathcal{P}_{16}$ is the refinement of the partitions $\mathcal{P}_{16,t}$, $\mathcal{P}_{16,\ell}$, $\mathcal{P}_{16,b}$, and $\mathcal{P}_{16,r}$, as depicted in Figure \ref{fig:partition_refinement}, and so by Lemma \ref{lem:Chi_in_Omega}, we know that $\mathcal{X}_{\mathcal{P}_{16},R_{16}} \subseteq \Omega_{16}$.

\pagebreak
\begin{figure}[H]
\centering
\begin{subfigure}[h]{.49\textwidth} 
\centering
\includegraphics[width=.9\textwidth]{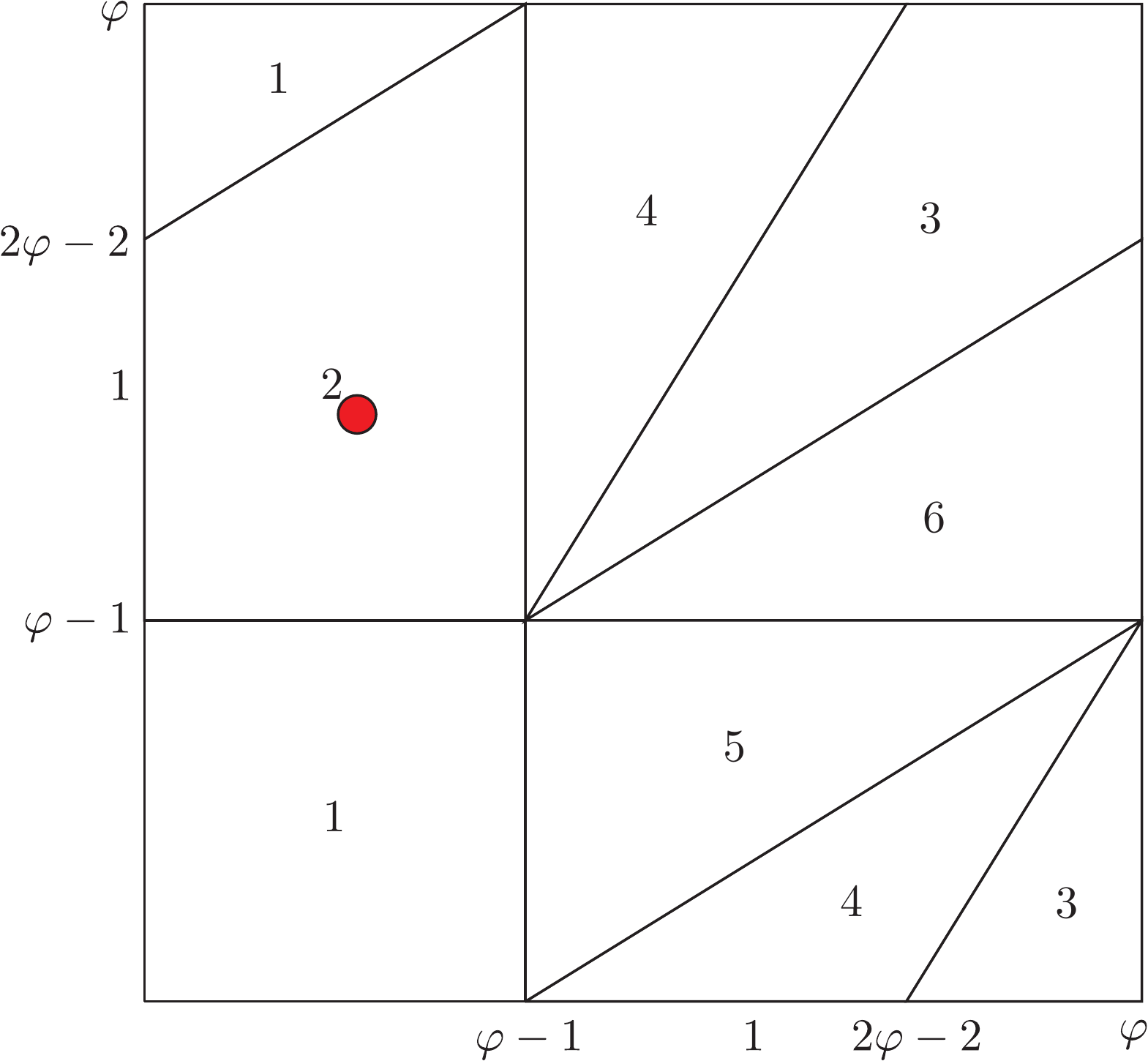}  
\caption{$\mathcal{P}_{16,t}$ - the top side partition}
\label{fig:P_l}
\end{subfigure}
\begin{subfigure}[h]{.49\textwidth} 
\centering
\includegraphics[width=.9\textwidth]{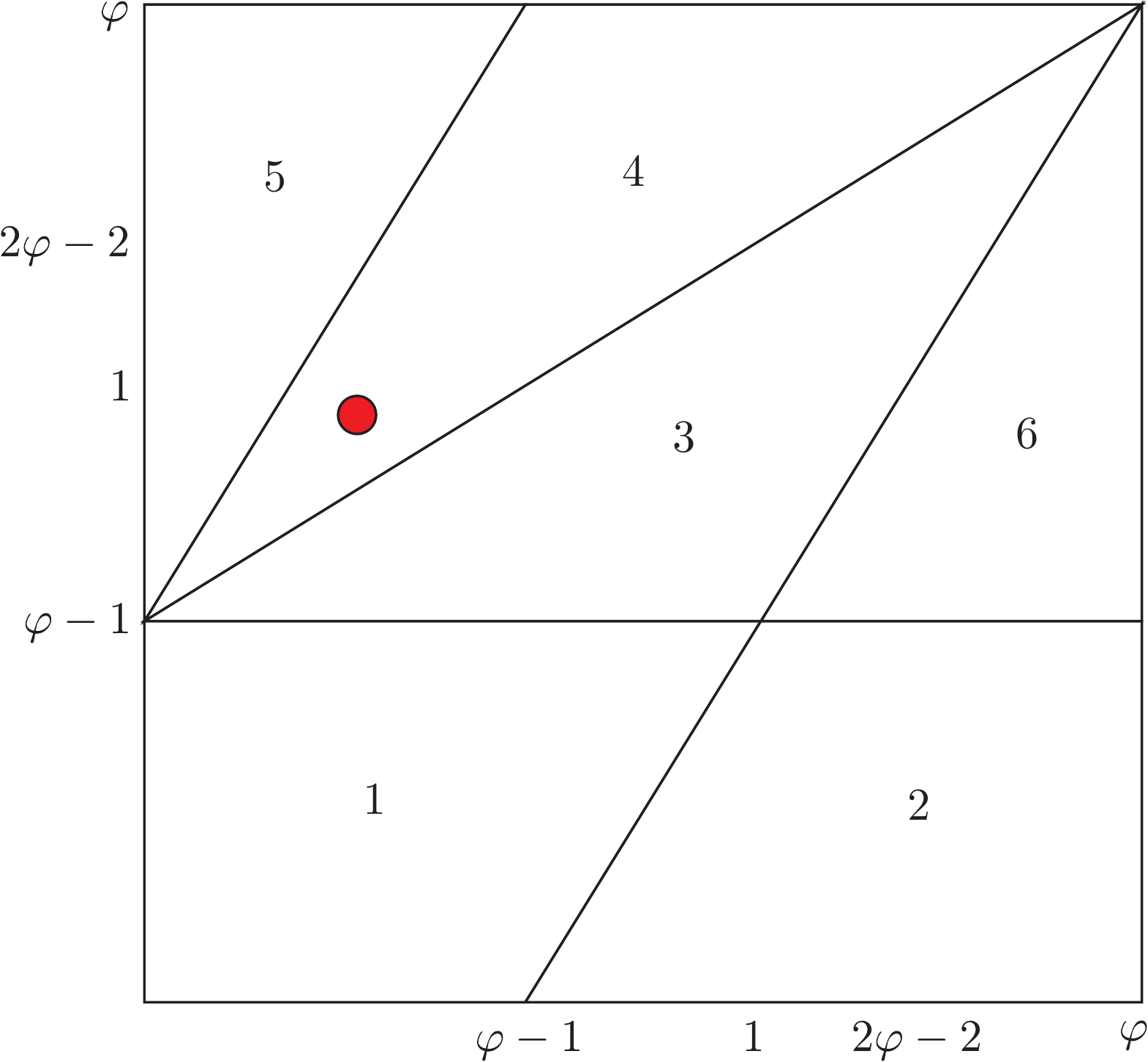}  
\caption{$\mathcal{P}_{16,\ell}$ - the left side partition}
\label{fig:P_r}
\end{subfigure}
\begin{subfigure}[h]{.49\textwidth} 
\centering
\includegraphics[width=.9\textwidth]{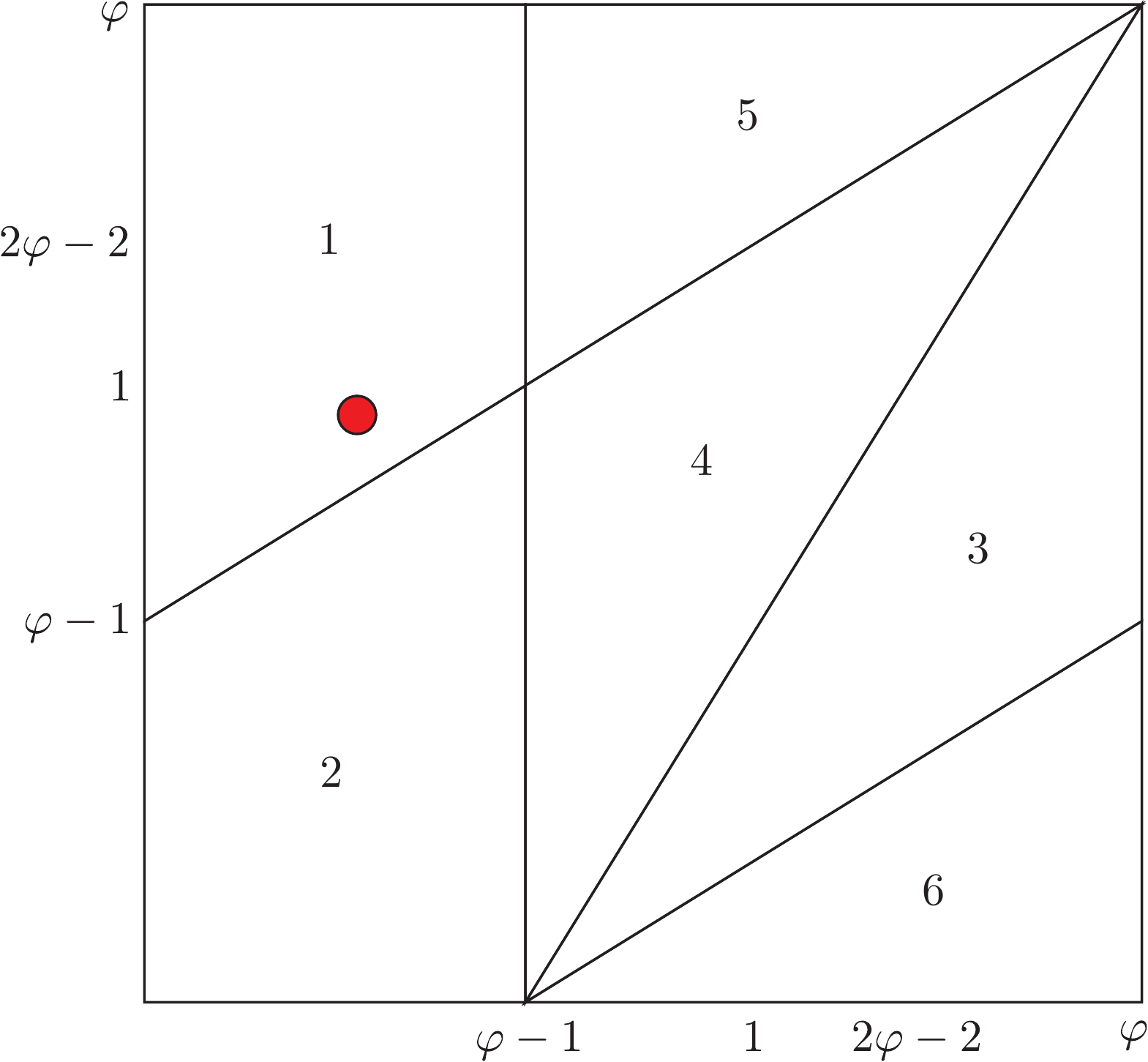}  
\caption{$\mathcal{P}_{16,b}$ - the bottom side partition}
\label{fig:P_b}
\end{subfigure}
\begin{subfigure}[h]{.49\textwidth} 
\centering
\includegraphics[width=.9\textwidth]{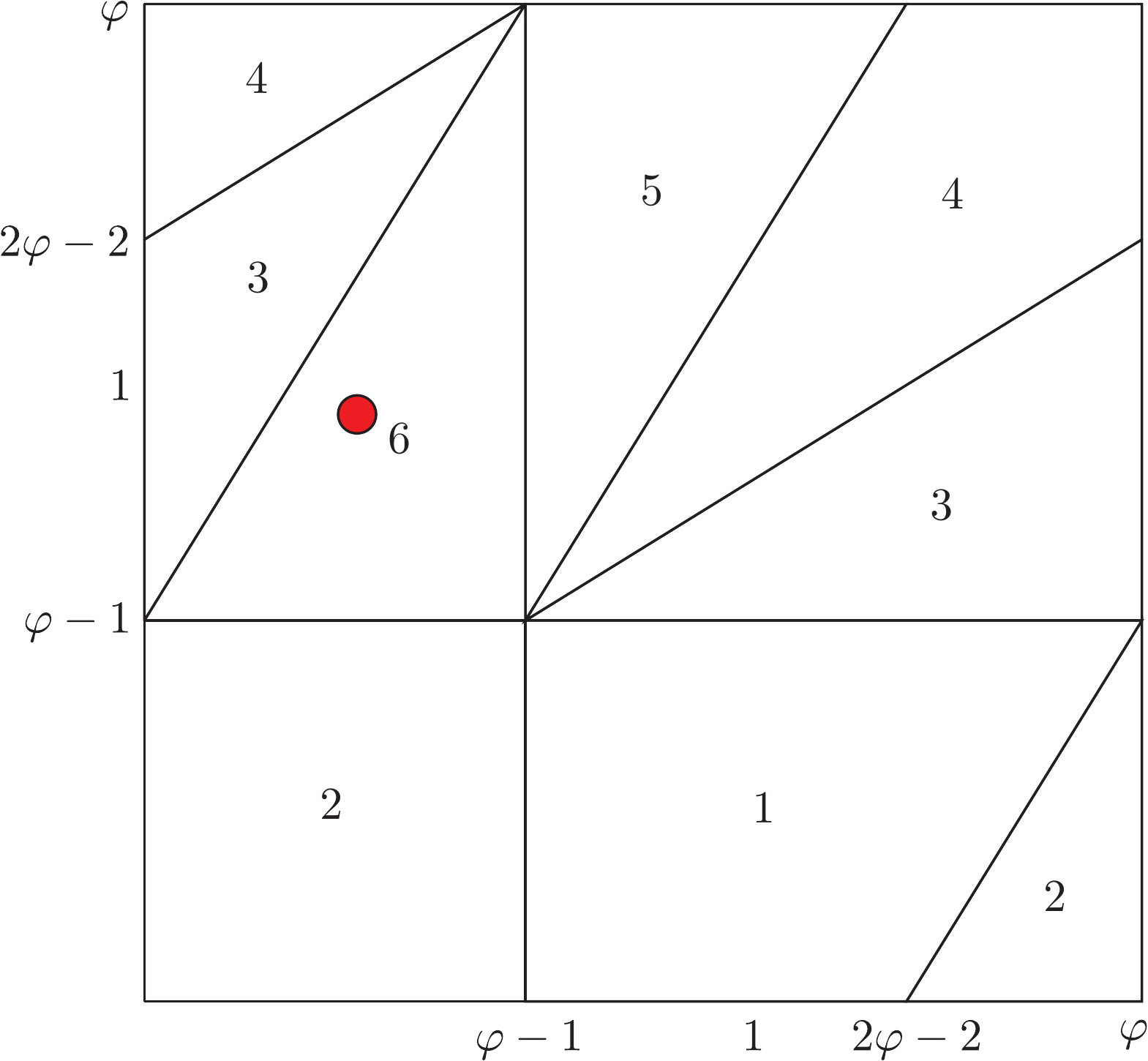}  
\caption{$\mathcal{P}_{16,r}$ - the right side partition}
\label{fig:P_t}
\end{subfigure}
\begin{subfigure}[h]{.49\textwidth} 
\centering
\includegraphics[width=.9\textwidth]{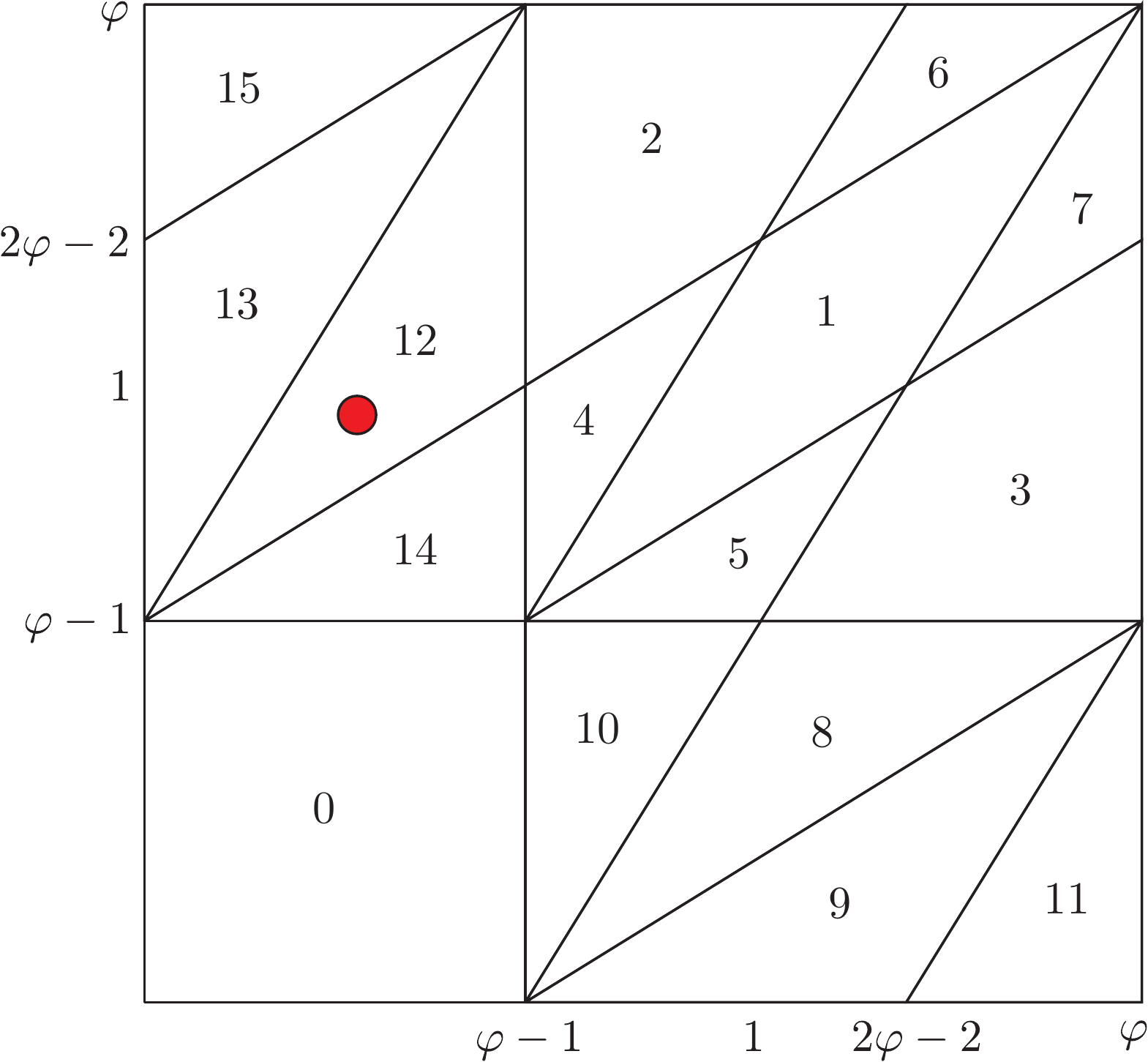}  
\caption{$\mathcal{P}_{16}$ - the tile partition}
\label{fig:P}
\end{subfigure}\caption{Refining the side partitions to obtain the tile partition $\mathcal{P}$. The red dot corresponds to the tile $(r,t,l,b) = (6,2,4,1)$, which is tile 12 from Figure \ref{fig:Ammann_16_Protoset}, so the corresponding region in $\mathcal{P}_{16}$ which is the intersection of the regions containing the red dot, is labeled 12.}\label{fig:partition_refinement} 
\end{figure}
\pagebreak

Next, we want to demonstrate that $\mathcal{P}$ gives a symbolic representation of $(\mathbf{T}_{16},\Z^2,\R_{16})$, but we will first establish a few supporting results.

\begin{lemma}\label{lem:minimality1}
    Let $\boldsymbol{v} = (v_x,v_y) \in \mathbf{T}_{16} = [0,\varphi]\times [0,\varphi]$ and let $x_0,x_1 \in [0,\varphi)$ with $x_0<x_1$. Then, there exists some $\boldsymbol{v}' = (v_x',v_y) \in \mathcal{O}_{R_{16}}(\boldsymbol{v})$ with $x_0<v'_x < x_1$. \end{lemma}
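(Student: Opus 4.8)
The plan is to observe that the requirement $\boldsymbol{v}' = (v_x', v_y)$, i.e. that $\boldsymbol{v}'$ shares the exact $y$-coordinate of $\boldsymbol{v}$, collapses the two-dimensional orbit to a single horizontal circle, and then to invoke density of an irrational rotation on that circle. First I would write the orbit explicitly: since $R_{16}^{(n_1,n_2)}(\boldsymbol{v}) = (v_x + n_1,\, v_y + n_2) \pmod{\Gamma_{16}}$, any point of $\mathcal{O}_{R_{16}}(\boldsymbol{v})$ has $y$-coordinate $v_y + n_2 \pmod{\varphi}$. For this to equal $v_y$ we need $n_2 \equiv 0 \pmod{\varphi}$; because $n_2 \in \Z$ while $\varphi$ is irrational, this forces $n_2 = 0$. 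Hence the points of the orbit lying on the same horizontal circle as $\boldsymbol{v}$ are exactly the points $(v_x + n_1 \pmod{\varphi},\, v_y)$ with $n_1 \in \Z$, and the lemma reduces to producing an integer $n_1$ with $v_x + n_1 \pmod{\varphi} \in (x_0,x_1)$.

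Second, I would recast this as a density statement on the circle $\R/\varphi\Z$. Rescaling by $\varphi$ (writing $u = x/\varphi$) turns the map $x \mapsto x + 1 \pmod{\varphi}$ into the rotation $u \mapsto u + 1/\varphi \pmod{1}$ of the standard circle $\R/\Z$. Its rotation number is $1/\varphi = \varphi - 1$, which is irrational precisely because $\varphi$ is. By the classical theorem on irrational rotations, every orbit $\{v_x + n \pmod{\varphi} : n \in \Z\}$ is dense in $[0,\varphi)$, so it must meet the nonempty open interval $(x_0,x_1)$. This yields the desired $n_1$, and with $v_x' = v_x + n_1 \pmod{\varphi}$ and $n_2 = 0$ we obtain $\boldsymbol{v}' = (v_x', v_y) \in \mathcal{O}_{R_{16}}(\boldsymbol{v})$ with $x_0 < v_x' < x_1$.

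The argument is essentially routine once the reduction to $n_2 = 0$ is in place, and the only step deserving care is the density claim — where the irrationality of $\varphi$ does all the work. To keep the paper self-contained I would supply the standard Dirichlet/pigeonhole proof rather than merely cite Weyl: among the $N+1$ points $v_x + n \pmod{\varphi}$ for $n = 0,\dots,N$, two must lie within $\varphi/N$ of one another, producing a nonzero integer $k$ with $v_x + k \pmod{\varphi}$ within distance $\varphi/N$ of $v_x$; taking $N$ large enough that $\varphi/N < x_1 - x_0$, the successive multiples of this small return step march through $[0,\varphi)$ in increments strictly smaller than the length of the target interval and therefore cannot skip over $(x_0,x_1)$. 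The main obstacle is thus merely the bookkeeping of this elementary estimate; there is no genuine difficulty, and I expect the chief value of the lemma to be as an ingredient (horizontal density) toward verifying the hypothesis of Lemma~\ref{lem:LabbeMinimality} and hence that $\mathcal{P}_{16}$ gives a symbolic representation.
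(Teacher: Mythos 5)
Your proposal is correct and follows essentially the same route as the paper's proof: restrict the orbit to horizontal translates $(v_x + n \bmod \varphi,\, v_y)$ and invoke density of the irrational rotation $x \mapsto x+1 \pmod{\varphi}$ on the circle. The extra details you supply (the observation that matching the $y$-coordinate forces $n_2 = 0$, and the pigeonhole proof of density) only make explicit what the paper leaves implicit.
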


\begin{proof}
    Consider the set $\{R_{16}^{(n,0)}(\mathbf{v}) \,:\, n \in \Z\} = \{(v_x +n \pmod{\varphi},v_y) \,:\,n \in \Z\}\subset \mathcal{O}_{R_{16}}(\mathbf{v})$, which we see is an irrational rotation of the point $\mathbf{v}$ horizontally around $\mathbf{T}_{16}$. Consequently, $\{R_{16}^{(0,n)}(\mathbf{v}) \,:\, n \in \Z\}$ is dense on the horizontal line through $\mathbf{v}$, and it follows that  there exists some $\mathbf{v}' = (v_x',v_y) \in \mathcal{O}_{R_{16}}(\mathbf{v})$ with $x_0<v'_x < x_1$.
\end{proof}

In an analogous way, we can prove the following Lemma:

\begin{lemma}\label{lem:minimality2}
    Let $\mathbf{v} = (v_x,v_y) \in \mathbf{T}_{16}$ and let $y_0,y_1 \in [0,\varphi)$ with $y_0<y_1$.  Then there exists some $\mathbf{v}' = (v_x,v_y') \in \mathcal{O}_{R_{16}}(\mathbf{v})$ with $y_0 < v_y' < y_1$. \end{lemma}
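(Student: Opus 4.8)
The plan is to mirror the proof of Lemma \ref{lem:minimality1} almost verbatim, swapping the roles of the horizontal and vertical directions. The key observation is that the torus $\mathbf{T}_{16} = [0,\varphi]\times[0,\varphi]$ is the quotient $\R^2/\Gamma_{16}$ where $\Gamma_{16} = \langle(\varphi,0),(0,\varphi)\rangle$, so the vertical circle through any point has circumference $\varphi$, and the action $R_{16}^{(0,n)}$ translates a point upward by $n$ and reduces the second coordinate modulo $\varphi$.

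First I would consider the subset $\{R_{16}^{(0,n)}(\mathbf{v}) : n \in \Z\} = \{(v_x, v_y + n \!\!\pmod{\varphi}) : n \in \Z\} \subset \mathcal{O}_{R_{16}}(\mathbf{v})$, which is the orbit of $\mathbf{v}$ under vertical translation by integers. Because the circumference of the vertical circle is $\varphi$, the step size (namely $1$) is an irrational multiple of the period $\varphi$ — indeed $1/\varphi = \varphi - 1$ is irrational — so this is an irrational rotation of the circle. By the classical equidistribution (or density) theorem for irrational rotations, the set $\{v_y + n \pmod{\varphi} : n \in \Z\}$ is dense in $[0,\varphi)$. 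Hence for the given subinterval $(y_0,y_1)$ there is some $n$ with $y_0 < v_y + n \pmod{\varphi} < y_1$, and the corresponding point $\mathbf{v}' = (v_x, v_y')$ with $v_y' = v_y + n \pmod{\varphi}$ lies in the orbit and satisfies the claim.

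I do not expect any genuine obstacle here, since the statement is exactly the vertical analogue of the already-proven Lemma \ref{lem:minimality1}; the only point requiring minimal care is confirming that the irrationality of $1/\varphi$ (equivalently of $\varphi$) is what guarantees density, which is immediate since $\varphi$ is a quadratic irrational. One could even phrase the proof as a single sentence invoking Lemma \ref{lem:minimality1} after noting the symmetry between the two coordinate directions, but I would write it out explicitly for clarity and to match the style of the preceding lemma.
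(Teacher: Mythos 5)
Your proposal is correct and is exactly the argument the paper intends: the paper simply states that Lemma \ref{lem:minimality2} is proved ``in an analogous way'' to Lemma \ref{lem:minimality1}, i.e.\ by observing that $\{R_{16}^{(0,n)}(\mathbf{v}) : n \in \Z\}$ is a dense irrational rotation orbit on the vertical circle through $\mathbf{v}$. Your added remark that density follows from the irrationality of $1/\varphi = \varphi - 1$ is a welcome bit of extra justification but does not change the approach.
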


Lemmas \ref{lem:minimality1} and \ref{lem:minimality2} show that the orbit of any point in $\mathbf{T}_{16}$ has a non-empty intersection with any horizontal or vertical strip on the torus, which allows us to prove the following theorem.

\begin{proposition}
    $(\mathbf{T}_{16}, \Z^2, R_{16})$ is minimal. \label{prop:min}
\end{proposition}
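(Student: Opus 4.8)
The plan is to reduce minimality to the statement that every orbit under $R_{16}$ is dense in $\mathbf{T}_{16}$. Recall that, by the definition given in Appendix \ref{apx:dyn_sys}, $(\mathbf{T}_{16},\Z^2,R_{16})$ is minimal precisely when it contains no proper, nonempty, closed, $R_{16}$-invariant subset. Since $R_{16}$ acts by homeomorphisms, the closure $\overline{\mathcal{O}_{R_{16}}(\mathbf{v})}$ of any orbit is always a nonempty closed $R_{16}$-invariant set; hence if some orbit failed to be dense its closure would be a proper such set, contradicting minimality, while conversely if every orbit is dense then any nonempty closed invariant set contains an entire orbit closure and therefore equals $\mathbf{T}_{16}$. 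So the whole proposition comes down to showing that $\overline{\mathcal{O}_{R_{16}}(\mathbf{v})} = \mathbf{T}_{16}$ for every $\mathbf{v} \in \mathbf{T}_{16}$.

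First I would fix an arbitrary point $\mathbf{v} = (v_x,v_y) \in \mathbf{T}_{16}$ and an arbitrary nonempty open set $U \subseteq \mathbf{T}_{16}$, and choose an open box $(x_0,x_1)\times(y_0,y_1) \subseteq U$ with $x_0,x_1,y_0,y_1 \in [0,\varphi)$. The goal is to produce a point of $\mathcal{O}_{R_{16}}(\mathbf{v})$ inside this box. Applying Lemma \ref{lem:minimality1} to $\mathbf{v}$ with the given $x_0<x_1$, I obtain a point $\mathbf{v}' = (v_x',v_y) \in \mathcal{O}_{R_{16}}(\mathbf{v})$ with $x_0 < v_x' < x_1$; note this point has the same second coordinate $v_y$ as $\mathbf{v}$, since the lemma is realized by horizontal translations $R_{16}^{(n,0)}$. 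Next I would apply Lemma \ref{lem:minimality2} to the point $\mathbf{v}'$ with the given $y_0<y_1$, producing $\mathbf{v}'' = (v_x',v_y'') \in \mathcal{O}_{R_{16}}(\mathbf{v}')$ with $y_0 < v_y'' < y_1$. The crucial observation is that Lemma \ref{lem:minimality2} is realized by vertical translations $R_{16}^{(0,m)}$, which preserve the first coordinate, so $\mathbf{v}''$ still has first coordinate $v_x' \in (x_0,x_1)$. Since $\mathbf{v}'' \in \mathcal{O}_{R_{16}}(\mathbf{v}') \subseteq \mathcal{O}_{R_{16}}(\mathbf{v})$, we conclude $\mathbf{v}'' \in \big((x_0,x_1)\times(y_0,y_1)\big) \cap \mathcal{O}_{R_{16}}(\mathbf{v}) \subseteq U \cap \mathcal{O}_{R_{16}}(\mathbf{v})$, so the orbit meets $U$.

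Because $U$ and $\mathbf{v}$ were arbitrary, this shows every orbit is dense, which by the first paragraph gives minimality. Honestly there is no serious obstacle here: the only point requiring care is bookkeeping about which coordinate each lemma affects, namely that the horizontal translation supplying $\mathbf{v}'$ fixes the $y$-coordinate and the subsequent vertical translation supplying $\mathbf{v}''$ fixes the (already-adjusted) $x$-coordinate, so that the two approximations can be achieved simultaneously rather than destroying one another. This works precisely because the $\Z^2$-action splits as a product of independent one-dimensional irrational rotations in the two coordinate directions, so that the orbit is the Cartesian product of two one-dimensional dense orbits; Lemmas \ref{lem:minimality1} and \ref{lem:minimality2} are exactly the statements that each factor is dense, and the box argument above is just the standard fact that a product of dense sets is dense in the product.
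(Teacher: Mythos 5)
Your proof is correct and follows essentially the same route as the paper's: reduce minimality to density of every orbit, then hit an arbitrary open box by combining Lemma \ref{lem:minimality1} and Lemma \ref{lem:minimality2}, using the fact that each lemma adjusts only one coordinate so the two approximations do not interfere. The only differences are cosmetic (you apply the horizontal lemma first where the paper applies the vertical one first, and you spell out the standard reduction from orbit density to minimality, which the paper leaves implicit).
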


\begin{proof}
We show that for arbitrary $\boldsymbol{p} = (p_x,p_y) \in \mathbf{T}_{16}$, $\mathcal{O}_{R_{16}}(\boldsymbol{p})$ is dense in $\mathbf{T}_{16}$. Let $\boldsymbol{a} \in \mathbf{T}_{16}$ and let $U$ be an open square of side length $2\varepsilon$ centered at $\boldsymbol{a} = (a_x,a_y)$ in $\mathbf{T}$, and without loss of generality, choose $\varepsilon$ sufficiently small so that $0 < a_x - \varepsilon < a_x + \varepsilon < \varphi$ and $0 < a_y - \varepsilon < a_y + \varepsilon < \varphi$. By Lemma \ref{lem:minimality2}, there exists $n \in \Z$ such that $\boldsymbol{p}' = (p_x,p_y')= R^{(0,n)}(\boldsymbol{p})$ lies in the horizontal strip bounded by lines $y=a_y - \varepsilon$ and $y = a_y + \varepsilon$. Next, by Lemma \ref{lem:minimality1}, there exists some $m \in \Z$ such that $\boldsymbol{p''}=R^{(m,n)}(\boldsymbol{p'}) = (p_x',p_y')$ with $a_x - \varepsilon < p_x' < a_x + \varepsilon$, and so we see that $\boldsymbol{p''} \in U$, and so $\mathcal{O}_R(\boldsymbol{p}) \cap U \neq \emptyset$. \end{proof}

To prove that $\mathcal{P}$ gives a symbolic representation of $(\boldsymbol{T},\Z^2,R)$, We apply Lemma \ref{lem:LabbeMinimality}, showing that some atom of $\mathcal{P}$ is invariant only under the trivial translation. 

\begin{proposition} \label{prop:symbrep}
    $\mathcal{P}_{16}$ gives a symbolic representation of $(\mathbf{T}_{16},\Z^2,R_{16})$.
\end{proposition}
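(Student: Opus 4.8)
The plan is to invoke Lemma~\ref{lem:LabbeMinimality}. Since Proposition~\ref{prop:min} already establishes that $(\mathbf{T}_{16},\Z^2,R_{16})$ is minimal, it remains only to exhibit an atom $P_i \in \mathcal{P}_{16}$ that is invariant under no nontrivial translation of the action, i.e. an atom for which $R_{16}^{\mathbf{n}}(P_i) = P_i$ forces $\mathbf{n} = \mathbf{0}$. I would fix, say, the atom $P_0$ (the bounded polygonal atom of area $(\varphi-1)^2$, which is visibly neither a horizontal nor a vertical strip in Figure~\ref{fig:Amm_16_partition}) and show that its stabilizer is trivial.

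The first step is to observe that for $\mathbf{n} = (n_1,n_2) \in \Z^2$ the map $R_{16}^{\mathbf{n}}$ is translation of $\mathbf{T}_{16} = \R^2/\Gamma_{16}$ by the vector $\mathbf{w} = (n_1 \bmod \varphi,\, n_2 \bmod \varphi)$, and that $\mathbf{w} = \mathbf{0}$ only when $\mathbf{n} = \mathbf{0}$: since $\Gamma_{16} = \langle(\varphi,0),(0,\varphi)\rangle$ and $\varphi$ is irrational, $n_i \in \varphi\Z$ together with $n_i \in \Z$ forces $n_i = 0$. By the same irrationality, the cyclic subgroup $\{k\mathbf{w} \bmod \Gamma_{16} : k \in \Z\}$ is infinite whenever $\mathbf{w} \neq \mathbf{0}$, for a relation $k\mathbf{w} \equiv \mathbf{0}$ with $k \geq 1$ would force $kn_1, kn_2 \in \varphi\Z$, hence again $n_1 = n_2 = 0$.

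The crux is then to rule out a nonzero $\mathbf{w}$ with $P_0 + \mathbf{w} = P_0$ on the torus. Suppose such $\mathbf{w}$ exists. Iterating gives $P_0 + k\mathbf{w} = P_0$ for all $k$, and taking closures shows that $\overline{P_0}$ is invariant under the closed subgroup $H = \overline{\langle \mathbf{w}\rangle} \le \mathbf{T}_{16}$. By the previous step $H$ is infinite, hence a positive-dimensional closed subgroup, i.e. either a one-dimensional subtorus in the direction of $\mathbf{w}$ or all of $\mathbf{T}_{16}$. In either case $\overline{P_0}$ would be a union of cosets of $H$, and so would contain, through each of its points, an entire geodesic in the direction of $\mathbf{w}$ (or all of $\mathbf{T}_{16}$). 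This is impossible for $P_0$: being a bounded polygon equal neither to a coordinate strip nor to the whole torus, $\overline{P_0}$ cannot be saturated under translation in any fixed direction. Hence no nontrivial $\mathbf{w}$ stabilizes $P_0$, its $R_{16}$-stabilizer is trivial, and Lemma~\ref{lem:LabbeMinimality} yields the claim.

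I expect the main obstacle to be this last geometric step, namely making rigorous that a genuine two-dimensional polygonal atom cannot be invariant under a nonzero toral translation. The cleanest way to discharge it is the subgroup-saturation argument above; alternatively, one can verify triviality of the stabilizer by a direct computation on the explicit vertex coordinates of the chosen atom, checking that no nonzero $\mathbf{w}$ of the form $(n_1 \bmod \varphi, n_2 \bmod \varphi)$ carries its finitely many boundary segments onto themselves.
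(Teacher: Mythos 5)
Your proposal is correct and follows essentially the same route as the paper: both invoke Lemma~\ref{lem:LabbeMinimality} together with the minimality from Proposition~\ref{prop:min}, and both verify the hypothesis by showing the atom $P_0$ has trivial stabilizer, using the irrationality of $\varphi$ to rule out nonzero $\mathbf{n}$ with $n_i \equiv 0 \pmod{\varphi}$. The only difference is in the geometric step reducing $R_{16}^{\mathbf{n}}(P_0)=P_0$ to a fixed-point condition: the paper asserts tersely that ``by continuity'' the translation must fix $(0,0)$, whereas your subgroup-saturation argument (or the finite-vertex-set permutation argument it suggests) spells this out more carefully; this is a refinement of the same proof, not a different one.
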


\begin{proof}
Consider $P_{0} \in \mathcal{P}_{16}$ from Figure \ref{fig:Amm_16_partition}. We show that $P_{0}$ is invariant only under the trivial translation under $R_{16}$. If $R^{\mathbf{n}}_{16}(P_0) = P_0$ for some $\boldsymbol{n} = (n_1,n_2) \in \Z^2$, then by continuity $R^{\mathbf{n}}$ must fix $(0,0) \in \boldsymbol{T}_{16}$, from which we obtain $n_1 \pmod{\varphi} = 0$ and $n_2 = 0 \pmod{\varphi}$. Due to the irrationality of $\varphi$, we conclude that $n_1 = n_2 = 0$, so $R^{\boldsymbol{n}}$ is the trivial translation.
\end{proof}

Having established that $\mathcal{P}_{16}$ gives a symbolic representation of $(\mathbf{T}_{16},\Z^2,R_{16]})$, we then apply Lemma \ref{lem:aperiodic} to conclude that $(\mathcal{X}_{\mathcal{P}_{16},R_{16}},\Z^2,\sigma)$ is minimal and $\mathcal{X}_{\mathcal{P}_{16},R_{16}}$ is an aperiodic subshift of $\Omega_{16}$. Moreover, we point out that it was recently proven by Labb\'{e} \cite{labbé2024metallicI}[Lemma 11.7] that $\Omega_{16}$ is minimal. It follows that $\mathcal{X}_{\mathcal{P}_{16},R_{16}} = \Omega_{16}$, and so $\mathcal{X}_{\mathcal{P}_{16},R_{16}}$ is an SFT,  proving the following.

\begin{theorem}
    $\mathcal{P}_{16}$ is a Markov partition for $(\mathbf{T}_{16},\Z^2,\R_{16}$), $\mathcal{X}_{\mathcal{P}_{16},R_{16}}$ is aperiodic, and $\mathcal{X}_{\mathcal{P}_{16},R_{16}} = \Omega_{16}$.
\end{theorem}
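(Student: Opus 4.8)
The plan is to assemble the three assertions from the machinery already in place, treating the theorem as the capstone of the chain of results in this appendix. The containment $\mathcal{X}_{\mathcal{P}_{16},R_{16}} \subseteq \Omega_{16}$ is already in hand from Lemma \ref{lem:Chi_in_Omega}, and Proposition \ref{prop:symbrep} supplies that $\mathcal{P}_{16}$ gives a symbolic representation of $(\mathbf{T}_{16},\Z^2,R_{16})$. With these two facts, each of the three claims---Markov partition, aperiodicity, and the equality $\mathcal{X}_{\mathcal{P}_{16},R_{16}} = \Omega_{16}$---reduces to invoking one of Labb\'{e}'s lemmas together with two elementary verifications: that $R_{16}$ acts freely and that $\Omega_{16}$ is minimal.

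First I would establish aperiodicity and minimality of the symbolic system. To apply Lemma \ref{lem:aperiodic}, I need $R_{16}$ to be a free $\Z^2$-action. Since $R_{16}^{\mathbf{n}}(p) = p + \mathbf{n} \pmod{\Gamma_{16}}$ with $\Gamma_{16} = \langle(\varphi,0),(0,\varphi)\rangle$, a fixed point $R_{16}^{\mathbf{n}}(p)=p$ forces $\mathbf{n} \in \Gamma_{16} \cap \Z^2$; writing $\mathbf{n} = (a\varphi, b\varphi)$ for integers $a,b$ and using the irrationality of $\varphi$ gives $a=b=0$, hence $\mathbf{n} = \mathbf{0}$. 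Thus $R_{16}$ is free, and Lemma \ref{lem:aperiodic}(2) yields that $\mathcal{X}_{\mathcal{P}_{16},R_{16}}$ is aperiodic. Combining the symbolic representation with the minimality of $(\mathbf{T}_{16},\Z^2,R_{16})$ from Proposition \ref{prop:min}, Lemma \ref{lem:aperiodic}(1) gives that $(\mathcal{X}_{\mathcal{P}_{16},R_{16}},\Z^2,\sigma)$ is minimal.

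Next I would close the loop to obtain equality with the full Wang shift. The subshift $\mathcal{X}_{\mathcal{P}_{16},R_{16}}$ is nonempty (it is the closure of a nonempty image), closed, and shift-invariant, and it lies inside $\Omega_{16}$. Since $\Omega_{16}$ is itself minimal---this is the external input, \cite{labbé2024metallicI}[Lemma 11.7]---a nonempty closed shift-invariant subset can only be all of $\Omega_{16}$, so $\mathcal{X}_{\mathcal{P}_{16},R_{16}} = \Omega_{16}$. Finally, because $\Omega_{16}$ is a Wang shift it is an SFT (its forbidden patterns are just the invalid horizontal and vertical edge-adjacencies), and therefore so is $\mathcal{X}_{\mathcal{P}_{16},R_{16}}$. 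With the symbolic-representation property from Proposition \ref{prop:symbrep} and the SFT property just obtained, Definition \ref{def:Markov} certifies that $\mathcal{P}_{16}$ is a Markov partition, completing all three assertions.

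The routine steps---freeness of $R_{16}$ and the SFT characterization---are immediate. The genuine dependency, and the step I expect to carry the real weight, is the minimality of $\Omega_{16}$: without it the containment $\mathcal{X}_{\mathcal{P}_{16},R_{16}} \subseteq \Omega_{16}$ could in principle be strict, and the SFT conclusion for $\mathcal{X}_{\mathcal{P}_{16},R_{16}}$ would not follow from its own structure alone. Since that minimality is supplied externally, the argument here is essentially an orchestration of the preceding lemmas; the only point demanding care is ensuring the hypotheses of Lemma \ref{lem:aperiodic} (symbolic representation together with freeness of $R_{16}$) are genuinely in place before the equality with $\Omega_{16}$ is deduced.
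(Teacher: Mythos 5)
Your proposal is correct and follows essentially the same route as the paper: combine Proposition \ref{prop:symbrep} with Lemma \ref{lem:aperiodic} to get minimality and aperiodicity of $\mathcal{X}_{\mathcal{P}_{16},R_{16}}$, invoke the externally supplied minimality of $\Omega_{16}$ to upgrade the containment from Lemma \ref{lem:Chi_in_Omega} to equality, and then read off the SFT property and the Markov-partition conclusion. Your explicit verification that $R_{16}$ acts freely is a small but welcome addition that the paper leaves implicit.
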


\end{appendices}

\printbibliography

\end{document}